\begin{document}

\newtheorem{theorem}{Theorem}
\newtheorem{proposition}[theorem]{Proposition}
\newtheorem{corollary}[theorem]{Corollary}
\newtheorem{lemma}[theorem]{Lemma}
\newtheorem{definition}[theorem]{Definition}
\newtheorem{remark}[theorem]{Remark}
\newtheorem{example}[theorem]{Example}
\numberwithin{theorem}{section}

\newcommand\CC{{\mathcal C}}
\newcommand\LL{{\mathcal L}}
\newcommand\ZZ{{\mathcal Z}}
\newcommand\hL{{\widehat{\mathcal L}}}
\newcommand\Vir{\mathfrak{Vir}}
\newcommand\Cusp{\mathfrak{Cusp}}
\newcommand\Cat{\mathfrak{C}}
\newcommand\Irr{\mathfrak{Irr}}
\newcommand\Z{{\mathbb Z}}
\newcommand\N{{\mathbb N}}
\newcommand\ad{\textup{\,ad\,}}
\newcommand\dv{\textup{div\,}}
\newcommand\R{{\mathcal R}}
\newcommand\C{{\mathbb C}}
\newcommand\dd[1]{\frac{\partial}{\partial #1}}
\newcommand\pd[2]{\frac{\partial #1}{\partial #2}}
\newcommand\p[1]{p(#1)}
\newcommand\A{{\mathcal A}}
\newcommand\V{{\mathcal V}}
\newcommand\FF{{\mathcal F}}
\newcommand\bM{\overline{M}}
\newcommand\hU{\widehat{U}}
\newcommand\AV{{\mathcal {AV}}}
\newcommand\Der{\textup{Der\,}}
\newcommand\Ker{\textup{Ker\,}}
\newcommand\Hom{\textup{Hom}}
\newcommand\Ext{\textup{Ext}}
\newcommand\End{\textup{End}}
\newcommand\Aut{\textup{Aut}}
\newcommand\Span{\textup{Span}}
\newcommand\Image{\textup{Im}}
\newcommand\hM{{\widehat M}}
\newcommand\te{{e}}
\newcommand\tu{{\tilde u}}
\newcommand\barS{{\overline{S}}}
\newcommand\del{{\partial}}
\newcommand\delt{{\partial_t}}
\newcommand\sdel{{{}^*\partial}}
\newcommand\Con{\mathfrak{Con}}
\newcommand\Alg{\mathfrak{Alg}}
\newcommand\Id{\textup{Id}}
\newcommand\Img{\textup{Im\,}}
\newcommand\tC{{\widetilde \CC}}
\newcommand\tL{{\widetilde \LL}}
\newcommand\tdel{{\widetilde \del}}
\newcommand\pp[1]{p(#1)}
\newcommand\br[2]{\langle #1 , #2 \rangle}
\newcommand\cb[2]{\{ #1 , #2 \}}
\newcommand\bcb[2]{\bm{\{} #1 , #2 \bm{\}}}
\newcommand\Ls{{\Lambda^{\circ}}}
\newcommand\tS{{\widetilde S}}
\newcommand\DD{D}
\newcommand\CK{{C{\kern -1pt}K}}

\newcommand*\oline[1]{%
   \vbox{%
     \hrule height 0.6pt
     \kern0.4ex
     \hbox{%
       \kern-0.15em
       \ifmmode#1\else\ensuremath{#1}\fi
       \kern 0.0em
     }
   }
}
\newcommand\bF{\oline{\mathcal F}}

\title
[Locality of superconformal algebras]
{Towards Kac - van de Leur conjecture: locality of superconformal algebras}
\author{Yuly Billig}
\address{School of Mathematics and Statistics, Carleton University, Ottawa, Canada}
\email{billig@math.carleton.ca}

\maketitle

\begin{abstract}
We prove locality of superconformal algebras: every pluperfect superconformal algebra is spanned by coefficients 
of a finite family of mutually local distributions. We also introduce quasi-Poisson algebras and show that they can
be used to construct all known simple superconformal algebras.
\end{abstract}

\section{Introduction}

Superconformal algebras play a crucial role in Conformal Field Theory. These are super generalizations of
the Virasoro algebra, and first examples appearing in physics literature were introduced in 1971 by Andr\'e Neveu and 
John Schwarz \cite{NSz} and Pierre Ramond \cite{R}.

In 1988 Victor Kac and Johan van de Leur proposed a conjectural classification of simple superconformal algebras \cite{KL}.
This classification was amended to include an exceptional simple superconformal algebra $\CK(6)$, which was
subsequently discovered \cite{CK} (see also \cite{S} and \cite{GSL}).

Given a superconformal algebra $\LL$, it is possible to construct new superconformal algebras using diagonalizable 
automorphisms of $\LL$ that fix the Virasoro subalgebra. These are known as twisted forms of $\LL$.
For example, Neveu-Schwarz and Ramond algebras are twisted forms of each other.

Consider a tensor product of an algebra of Laurent polynomials with a Grassmann (exterior) algebra with $N$ odd generators:
$$\R_N = \C[t, t^{-1}] \otimes \Lambda(\xi_1, \ldots, \xi_N).$$
Let $W(1,N)$ be the Lie superalgebra of derivations of this supercommutative algebra:
$$W(1, N) = \Der (\R_N) = \R_N \dd{t} \oplus \sum\limits_{j=1}^N  \R_N \dd{\xi_j} .$$
Algebra $W(1, N)$ is a prototypical example of a simple superconformal algebra, and all algebras in Kac-van de Leur 
classification are twisted forms of various subalgebras in $W(1,N)$.

These include superalgebras of divergence zero vector fields $S(1, N)$ and contact vector fields $K(1, N)$,
thus the notions and ideas of the the theory of superconformal algebras may be traced back to classical
works of Sophus Lie \cite{L} and \'Elie Cartan \cite{C}.

\noindent
{\bf Kac-van de Leur Conjecture.} 
{\it Every simple superconformal algebra is a twisted form of one of the following
algebras: $W(1, N)$, $S^\prime (1, N)$, $\tS(1, 2N)$, $K^\prime(1, N)$, $\CK(6)$.}

Kac and van de Leur specified a conjectural list of the twisted forms that may occur \cite{KL} (see also \cite{MZ}).

Given connections with physics, it is not surprising that all of the algebras on Kac-van de Leur list give rise to 
vertex superalgebras and conformal superalgebras. A major step towards establishing Kac-van de Leur Conjecture
was made by Davide Fattori and Victor Kac \cite{FK}, who classified finite simple conformal superalgebras.

{\bf Theorem.} (\cite{FK}) 
{\it Any finite simple Lie conformal superalgebra is isomorphic to
one of the Lie conformal superalgebras of the following list:
$W_N$, $S_{N, a}$, $\tS_{2N}$, $K_N$ ($N \neq 4$), $K^\prime(4)$, $\CK_6$, and current algebra associated with 
a simple finite-dimensional Lie superalgebra.}

In order to establish Kac-van de Leur Conjecture, it remains to (1) prove that every simple superconformal algebra
has a finite simple conformal superalgebra associated with it; (2) compute the groups of automorphisms
fixing the Virasoro element for conformal superalgebras on Fattori-Kac list; (3) classify the resulting twisted forms of 
superconformal algebras up to an isomorphism.

Victor Kac, Michael Lau and Arturo Pianzola \cite{KLP} computed full automorphism groups for conformal superalgebras 
$K_2$ (a.k.a. $N=2$ conformal superalgebra) and $S_{2, 0}$ (a.k.a. $N=4$ conformal superalgebra). In particular,
they have shown that for $S_{2,0}$ the full group of automorphisms is $SL_2 (\C) \times SL_2 (\C) / \{\pm (I, I)\}$,
while the subgroup preserving the Virasoro element is $SL_2 (\C)$. As a corollary, they established that 
non-isomorphic twisted forms of superconformal algebra $S(1,2)$ are parametrized by one scalar.

In the present paper we prove that pluperfect superconformal algebras (these include simple superconformal algebras and 
their non-split central extensions) belong to the class of graded Lie superalgebras of formal distributions. This means that 
these algebras are spanned by components of a finite family of mutually local distributions. This allows us to associate
a conformal superalgebra to every pluperfect superconformal algebra. However, we can only prove that it is of a finite rank
as an affine conformal superalgebra, and not necessarily a finite conformal superalgebra.

We give an example of a non-pluperfect superconformal algebra which fails to be a graded superalgebra of formal distributions, 
and hence has no conformal superalgebra associated with it. We also give an example of a non-simple superconformal algebra
which is a graded Lie superalgebra of formal distributions, but has no associated finite conformal superalgebra, only an affine one.


Alexander Balinsky and Sergei Novikov \cite{BN} constructed a functor from a class of non-associative algebras, called
Novikov algebras, into the category of superconformal algebras, hoping to construct new simple superconformal 
algebras in this way. Disappointingly for this theory, by the end of the talk that Sergei Novikov gave on this subject
in 1985, Efim Zelmanov \cite{Z} was able to prove that a finite-dimensional simple Novikov algebra has dimension 1
(and corresponds to the Virasoro algebra). In addition to it there is a single two-dimensional simple Novikov superalgebra,
which corresponds to Neveu-Schwarz and Ramond superconformal algebras.

We develop the ideas of Balinsky-Novikov further, introducing quasi-Poisson superalgebras. Just as Poisson superalgebras,
quasi-Poisson superalgebras have a supercommutative multiplication, as well as a Lie bracket, but the relation between these two operations is more intricate. A Poisson superalgebra belongs to the class of quasi-Poisson superalgebras if it admits an even derivation $\DD$ of its supercommutative product with a property that $P = \Id + \DD$ is a derivation of its Lie structure. 
There is a functor from the category of quasi-Poisson superalgebras to the category of superconformal algebras.
One can
associate a simple finite-dimensional quasi-Poisson superalgebra with every superconformal algebra on Kac-van de Leur list.

The paper has the following structure: we discus the category of cuspidal modules for the Virasoro algebra in Section 2 and apply
this theory to superconformal algebras. We establish some technical results on the discrete derivative in Section 3, which we need 
for the proof of mutual locality of generating distributions in pluperfect superconformal algebras in Section 4. In Section 5 we 
discuss a correspondence between twisted forms of superconformal algebras and conformal superalgebras decorated with an automorphism. Finally, we introduce quasi-Poisson superalgebras in Section 6 and list known simple finite-dimensional quasi-Poisson algebras in the Appendix.

\

{\bf Acknowledgements.} This research is supported with a grant from the Natural Sciences and Engineering Research Council of Canada. Parts of this work were carried out during visits to the University of S\~ao Paulo and Institut Henri Poincar\'e. I thank these institutions for the excellent working conditions and hospitality.

\section{Superconformal algebras and cuspidal modules for the Virasoro algebra}

We denote by $\Vir$ the centerless Virasoro algebra with a basis 
$\{ L_n \, | \, n \in \Z \}$ and Lie bracket
$$ [L_n, L_k] = (n - k) L_{n+k} .$$

\begin{definition}
Let $\LL = \LL^0 \oplus \LL^1$ be a Lie superalgebra graded by the additive group $\C$:
$$ \LL^{0,1} = \mathop\oplus\limits_{j \in \C} \LL^{0,1}_j .$$
Lie superalgebra $\LL$ is called a superconformal algebra if the following conditions hold:

\noindent
(S1) $\LL$ contains $\Vir$ as a subalgebra,

\noindent
(S2) The grading on $\LL$ is given by the eigenvalues of $\ad L_0$ and supported on a finite number of $\Z$-cosets in $\C$,

\noindent
(S3) Dimensions of graded components $\LL_j$ are bounded by a constant, independent of $j$.

\end{definition}


We shall denote the parity of $x$ by $\pp{x}$, so that $\pp{x} = 0$ for $x \in \LL^0$ and $\pp{x} = 1$ for $x \in \LL^1$.
Note that in the literature a condition of simplicity is often added in the definition of a superconformal algebra. We find it useful to consider a wider class of algebras.

Let $\LL$ be a Lie superalgebra. A formal series $x(z) = \sum_{n \in \alpha} x_n z^{-n-1}$ with
$\alpha \in \C/\Z$ and $x_n \in \LL$ is called a \emph{distribution}  supported on a coset $\alpha$ with values in $\LL$.

\begin{definition}
\label{Lsfd}
Two distributions $x(z)$, $y(z)$ in Lie superalgebra $\LL$ are called mutually local if
there exists $N \in \N$ such that 
$$(z-w)^N [x(z), y(w)] = 0.$$
\end{definition}

\begin{definition} (\cite{K})
Lie superalgebra $\LL$ is called a Lie superalgebra of formal distributions if it is spanned by the components of a family $\FF$ of mutually local $\LL$-valued distributions.
\end{definition}

 We call $\FF$ a spanning family of $\LL$.

The notion of a Lie superalgebra of formal distributions is rather weak, and in fact every Lie superalgebra satisfies the above definition with a family of distributions $\{ x(z) \, | \, x \in \LL \}$ with $x_n = x$ for all $n \in\Z$. It is easy to see that 
for these distributions $(z-w) [x(z), y(w)] = 0$.

This notion may be strengthened in two ways.  The first of them is:

\begin{definition}
A graded Lie superalgebra $\LL = \mathop\oplus\limits_{j \in \C} \LL_j$ is called a graded Lie superalgebra of formal distributions if it is spanned by the components of a family of mutually local homogeneous distributions
$$ \left\{ x(z) = \sum_{n\in\alpha} x_n z^{-n-1} \right\}$$
with $x_n \in \LL_{n + \delta}$, where $\alpha \in \C/\Z$ and $\delta \in \C$ depend on $x$.
\end{definition}

A second variant of Definition \ref{Lsfd}, the notion of a regular Lie superalgebra of formal distributions will be discussed in Section \ref{dloc}.

A superconformal algebra $\LL$ is perfect, $\LL = \LL^\prime$, if and only if 
$\LL_0 = \mathop\oplus\limits_{j \in \C} [\LL_j, \LL_{-j}]$. Indeed, all components $\LL_j$ with $j \neq 0$ belong to
$\LL^\prime = [\LL, \LL]$ since $\ad (L_0)$ acts on $\LL_j$ as multiplication by $j$.

\begin{definition}
We call a superconformal algebra $\LL$ pluperfect if
$$\LL_0 = \mathop\oplus\limits_{j \in \C \backslash \{ 0 \} } [\LL_j, \LL_{-j}].$$
\end{definition}

\begin{lemma}
(a) The space
$$ \left( \mathop\oplus\limits_{j \in \C \backslash \{ 0 \} } \LL_j \right)
\oplus
\left( \mathop\oplus\limits_{j \in \C \backslash \{ 0 \} } [\LL_j, \LL_{-j}] \right)$$
is an ideal in a graded algebra $\LL$.

(b) Every simple superconformal algebra is pluperfect.

(c) Every non-split central extension of a simple superconformal algebra is pluperfect.
\end{lemma}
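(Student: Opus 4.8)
The plan is to establish (a) directly from the grading and to read off (b) and (c) from it. Write $I=\bigl(\bigoplus_{j\neq0}\LL_j\bigr)\oplus\bigl(\bigoplus_{j\neq0}[\LL_j,\LL_{-j}]\bigr)$; since the second summand lies in $\LL_0$, the displayed sum is genuinely direct, and since $I$ is a span of homogeneous subspaces it suffices to verify $[\LL_k,\LL_j]\subseteq I$ for $j\neq0$ and $[\LL_k,[\LL_j,\LL_{-j}]]\subseteq I$ for $j\neq0$, for every $k$. In the first case $[\LL_k,\LL_j]\subseteq\LL_{k+j}\subseteq I$ when $k+j\neq0$, while $[\LL_{-j},\LL_j]\subseteq[\LL_j,\LL_{-j}]\subseteq I$ when $k=-j$. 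In the second case $[\LL_k,[\LL_j,\LL_{-j}]]\subseteq\LL_k\subseteq I$ when $k\neq0$; and for $k=0$ the super Jacobi identity together with $[\LL_0,\LL_{\pm j}]\subseteq\LL_{\pm j}$ gives $[\LL_0,[\LL_j,\LL_{-j}]]\subseteq[[\LL_0,\LL_j],\LL_{-j}]+[\LL_j,[\LL_0,\LL_{-j}]]\subseteq[\LL_j,\LL_{-j}]\subseteq I$. Hence $I$ is an ideal.

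For (b), suppose $\LL$ is simple. Then $L_n\in I$ for $n\neq0$, since $L_n$ lies in the graded component $\LL_{-n}$ of non-zero degree, and $L_0=\tfrac1{2n}[L_n,L_{-n}]\in[\LL_n,\LL_{-n}]\subseteq I$; thus $\Vir\subseteq I$ and in particular $I\neq0$, so simplicity forces $I=\LL$. Comparing the degree-$0$ parts of $I=\LL$ (the summand $\bigoplus_{j\neq0}\LL_j$ contributes nothing in degree $0$) yields $\LL_0=\bigoplus_{j\neq0}[\LL_j,\LL_{-j}]$, i.e. $\LL$ is pluperfect.

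For (c), let $\pi\colon\hL\to\LL$ be a non-split central extension with kernel $Z$, $\LL$ simple, and $\hL$ again a superconformal algebra, and let $I$ denote the ideal attached to $\hL$ by (a). I would argue in three steps. (i) The center $Z$ is homogeneous of degree $0$: a homogeneous central element of degree $j$ satisfies $0=[L_0,z]=jz$, forcing $j=0$; consequently $\pi$ restricts to an isomorphism $\hL_j\to\LL_j$ for $j\neq0$ and to a surjection $\hL_0\to\LL_0$ with kernel $Z$. (ii) $[\hL,\hL]\subseteq I$: a bracket of homogeneous elements of degrees $k,l$ lies in $\hL_{k+l}\subseteq I$ if $k+l\neq0$, in $[\hL_k,\hL_{-k}]\subseteq I$ if $k=-l\neq0$, and in $[\hL_0,\hL_0]$ if $k=l=0$; in this last case, lifting the pluperfectness of $\LL$ from (b) through the isomorphisms of step (i) shows $\hL_0=(I\cap\hL_0)+Z$, and since $Z$ is central this gives $[\hL_0,\hL_0]=[I\cap\hL_0,\,I\cap\hL_0]\subseteq[I,I]\subseteq I$. (iii) Non-splitness forces $\hL=[\hL,\hL]$: as $\LL$ is perfect, $\pi([\hL,\hL])=\LL$, so $[\hL,\hL]+Z=\hL$; if $Z\not\subseteq[\hL,\hL]$ a non-zero complement $Z_1$ of $Z\cap[\hL,\hL]$ in $Z$ would yield $\hL=[\hL,\hL]\oplus Z_1$ as a direct sum of ideals with $Z_1$ central, which contradicts non-splitness (when $\dim Z=1$ this simply says a non-trivial extension satisfies $Z\subseteq[\hL,\hL]$). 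Combining (ii) and (iii) gives $\hL=[\hL,\hL]\subseteq I\subseteq\hL$, so $I=\hL$, and comparing degree-$0$ parts as in (b) shows $\hL$ is pluperfect.

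The step I expect to need the most care is (iii): pinning down the precise meaning of "non-split" that makes the statement true — namely that no non-zero central ideal of $\hL$ is a direct summand, which for $\dim Z=1$ is the same as the extension being non-trivial — and extracting perfectness of $\hL$ from it. Step (ii) is routine once (i) locates the center in degree $0$, and parts (a)–(b) are formal.
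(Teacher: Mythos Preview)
The paper omits the proof entirely, saying only ``The proof of this lemma is obvious and we omit it.'' Your argument supplies the details correctly: part (a) is the routine graded computation you give, (b) follows since $\Vir\subseteq I$ forces $I=\LL$ by simplicity, and (c) is handled by locating $Z\subseteq\hL_0$, lifting the pluperfectness of $\LL$ to get $\hL_0=(I\cap\hL_0)+Z$, and then using non-splitness to obtain $\hL=[\hL,\hL]$.

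Your caution about step (iii) is warranted but not a gap. The standard reading in this context is a one-dimensional central extension (the Virasoro-type situation the paper has in mind), where ``non-split'' is unambiguous and your argument goes through verbatim: $Z\cap[\hL,\hL]=0$ would give a Lie-algebra section $[\hL,\hL]\xrightarrow{\sim}\LL$, contradicting non-triviality, so $Z\subseteq[\hL,\hL]$ and hence $\hL=[\hL,\hL]$. For higher-dimensional $Z$ the honest hypothesis is the one you identify---that no nonzero central summand splits off---and with that reading your decomposition $\hL=[\hL,\hL]\oplus Z_1$ immediately gives the contradiction. Since the paper treats the lemma as self-evident and its applications are to the one-dimensional case, there is nothing to correct.
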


The proof of this lemma is obvious and we omit it.

One of the goals of the present paper is to prove the following theorem:

\begin{theorem}
\label{main}
Every pluperfect superconformal algebra is a graded Lie superalgebra of formal distributions with a finite spanning family.
\end{theorem}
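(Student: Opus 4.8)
The plan is to realize every pluperfect superconformal algebra $\LL$ as a subalgebra of coefficients of distributions coming from the structure of $\LL$ as a module over its Virasoro subalgebra. The starting point is that, by conditions (S2) and (S3), $\LL$ viewed as a $\Vir$-module via the adjoint action is a \emph{cuspidal} module: it is $\Z$-graded (on each coset), with uniformly bounded graded pieces, and supported on finitely many $\Z$-cosets in $\C$. One should first invoke the structure theory of cuspidal $\Vir$-modules developed in Section 2 (tensor density modules and their extensions): every cuspidal $\Vir$-module has finite length, and its composition factors are among the tensor density modules $\mathcal T_{\alpha,\beta}$ (on the coset $\alpha$). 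Consequently $\LL = \bigoplus_{\alpha} \LL[\alpha]$ is a finite direct sum of cuspidal pieces, each of finite length, and for each coset we get a finite filtration whose subquotients are density modules.

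Next I would convert the density-module structure into a finite \emph{generating set} of homogeneous vectors. On a tensor density module $\mathcal T_{\alpha,\beta}$ the element $L_{-1}$ (equivalently the operator $\del$ coming from $L_{-1}$) is locally surjective up to bounded cokernel, so finitely many homogeneous vectors generate $\LL$ under the action of $\{L_{-1}, L_0, L_1\}$; the subtlety that $\LL$ is only filtered, not semisimple, as a $\Vir$-module is handled by working up the finite filtration one subquotient at a time and lifting generators. Choose such a finite set $x^{(1)},\dots,x^{(r)}$ of homogeneous generators, with $x^{(i)}$ of degree $\delta_i$ on coset $\alpha_i$, and form the distributions
$$ x^{(i)}(z) = \sum_{n \in \alpha_i} \big( \ad L_{-1} \big)^{\text{-orbit expansion of } x^{(i)} \big)_{n}\, z^{-n-1}, $$
more precisely, choose for each generator a distribution whose coefficient in degree $n+\delta_i$ is obtained from $x^{(i)}$ by the appropriate normalized power of $L_{-1}$ (the usual ``field attached to a vector'' in the cuspidal setting), together with the distribution $L(z)=\sum_n L_n z^{-n-1}$ for the Virasoro part. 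By construction the coefficients of this finite family span $\LL$ — here is where pluperfectness is used, to guarantee that $\LL_0$ is reached by brackets $[\LL_j,\LL_{-j}]$ with $j\neq 0$ and hence lies in the span of coefficients of the nonzero-degree pieces.

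The remaining, and I expect \emph{main}, step is to prove \textbf{mutual locality}: for each pair $i,j$ there is $M$ with $(z-w)^M [x^{(i)}(z), x^{(j)}(w)] = 0$. The bracket $[x^{(i)}(z), x^{(j)}(w)]$ is an $\LL$-valued formal distribution in two variables; expanding it in the basis of $\LL$ and using that $\LL$ has uniformly bounded graded components, one sees that for fixed total degree the bracket is controlled by finitely many structure constants. The key is that these structure constants, as functions of $n$ (the index in $x^{(i)}(z)$), are \emph{polynomial} — this is exactly the content of the discrete-derivative results promised in Section 3: the adjoint action of $\Vir$ makes the matrix coefficients of $\ad x^{(i)}_n$ on each density subquotient polynomial in $n$, and polynomiality is inherited through the finite Virasoro filtration of $\LL$. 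A formal distribution $[x^{(i)}(z),x^{(j)}(w)] = \sum_{m,n} c_{m,n}\, z^{-m-1} w^{-n-1}$ whose coefficients $c_{m,n}$ depend polynomially on $m,n$ (with values in the bounded spaces $\LL_{m+n+\delta_i+\delta_j}$) is annihilated by a sufficiently high power of $(z-w)$ — this is the standard equivalence between polynomiality of coefficients and locality, and the required $M$ is bounded by the polynomial degree plus the rank of the cuspidal module. Carrying out this polynomiality-implies-locality argument uniformly over the finitely many pairs $(i,j)$, and checking that the chosen distributions are homogeneous in the sense required by the definition of a graded Lie superalgebra of formal distributions, completes the proof. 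The delicate point to get right is that the Virasoro filtration of $\LL$ is finite but nontrivial, so ``polynomial structure constants'' must be established for the filtered module, not just for a density module; this is where the Section 3 machinery on discrete derivatives does the real work.
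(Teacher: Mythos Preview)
Your overall architecture matches the paper's: view $\LL$ as a cuspidal $\Vir$-module, build a finite family of homogeneous distributions from a basis adapted to the $\Vir$-module structure, prove they are mutually local, and invoke pluperfectness (plus Dong's Lemma) to span $\LL_0$. The place where pluperfectness enters is correctly located.

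The genuine gap is in the locality step. You assert that the structure constants $c_{m,n}$ are ``polynomial in $n$'' and that this follows from the density-module structure plus inheritance through the filtration. But polynomiality in this sense \emph{is} locality (via Lemma~\ref{locder}), so this is close to assuming the conclusion. The paper does not argue this way. Its actual input is Theorem~\ref{cusp}(a): on any cuspidal module one has $\Delta^N_{p,q}\,\ad L_p\,\ad L_q = 0$ for some $N$. This is a nontrivial structural fact about cuspidal modules, and it is the \emph{seed} relation from which all other locality relations are derived. The propagation is not a passive ``inheritance through the filtration'' but a two-stage descending induction along a flag in the $W_+$-module $U$ (Lemma~\ref{inter} first upgrades the seed to $\Delta^M_{p,q}\,\ad \tu^i_p\,\ad L_q = 0$, then Proposition~\ref{mutual} upgrades again to $\Delta^S_{p,q}[\tu^i_p,\tu^j_q]=0$), using Lemma~\ref{dec} to strip off polynomial prefactors at each step.

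You also miss a point that is essential, not cosmetic: when a generator $u^i$ lies in the block with $\alpha_i = 1$ (i.e.\ composition factor $V(1,\beta)$), the na\"ive distribution $\sum_k \pi(u^i_k) z^{-k}$ is \emph{not} local with the others; one must replace $\pi(u^i_k)$ by $k\,\pi(u^i_k)$ (equation~(\ref{gens})). This is precisely the mechanism behind Example~\ref{nonlocal}, and it is why the induction in Lemma~\ref{inter} and Proposition~\ref{mutual} bifurcates into the cases $\alpha_i=1$ and $\alpha_i\neq 1$. The $\alpha_i=1$ correction kills the zero-mode, which is exactly why pluperfectness is then needed to recover $\LL_0$ via $n$-th products. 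Your proposal does not see this phenomenon, so as written the locality argument would fail whenever a $V(1,\beta)$ factor is present.
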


The condition for the superconformal algebra to be pluperfect, $\LL = [\LL, \LL]$ in the above theorem is essential. Let us give an example of a superconformal algebra which fails to be a graded Lie superalgebra of formal distributions. 
This example is based on the coadjoint representation of the Virasoro Lie algebra.

\begin{example}
\label{nonlocal}
\normalfont
Let $\LL$ be a Lie algebra with a basis $\{ L_n, F_n, G_n \, | \, n \in \Z \}$ with Lie brackets
\begin{align*}
&[L_n, L_m] = (n-m) L_{n+m}, \\
&[L_n, F_m] = - (m + 2n) F_{n+m}, \\
&[L_n, G_m] = - (m + n) G_{n+m} + \delta_{m,0} n^3 F_n, \\
&[F_n, F_m] = [F_n, G_m] = [G_n, G_m] = 0.
\end{align*}
\end{example}

This Lie algebra is not perfect - element $G_0$ does not belong to $[\LL, \LL]$. The derived subalgebra $\LL^\prime = [\LL, \LL]$ is  a graded Lie superalgebra of formal distributions, and is spanned by mutually 
local distributions
\begin{equation*}
L(z) = \sum_{n\in\Z} L_n z^{-n-2}, \ \ 
F(z) = \sum_{n\in\Z} F_n z^{-n-1}, \ \ 
G(z) = \sum_{n\in\Z} n G_n z^{-n-1}. 
\end{equation*}


Let us recall the results of \cite{M} and \cite{BF} on cuspidal modules for the Virasoro algebra.

\begin{definition}
A module $M$ for the Virasoro algebra $\Vir$ is called cuspidal if $M$ has a weight decomposition
$$M = \mathop\oplus\limits_{\lambda \in\C} M_\lambda,$$
where $M_\lambda = \{ m\in M \, | \, L_0 m = \lambda m \}$,
such that there exists a uniform bound $K$ for the dimensions of the weight spaces,
$\dim M_\lambda \leq K$ for all $\lambda \in\C$ and the grading is supported on a finite number of 
$\Z$-cosets.
\end{definition}

We will denote by $\Cusp$ the category of cuspidal modules for the Virasoro algebra.
The condition on a finite number of cosets in the support of the grading is not essential and is added in order to simplify certain statements.

It follows immediately from the definitions that any superconformal algebra $\LL$ is a cuspidal module with respect to
the adjoint action of $\Vir$.


Important examples of cuspidal modules are the {\it tensor modules} $V(\alpha, \beta)$,
$\alpha \in \C$, $\beta \in\C/\Z$,
which have bases $\{ v_k \, | \, k \in \beta \}$ and the action is
$$ L_n v_k = - (k + \alpha n) v_{n+k} .$$
Tensor modules $V(\alpha, \beta)$ are pairwise non-isomorphic. They are simple, except in two cases, $\alpha = 0, 1$, $\beta = \Z$. Viewing $\Vir$ as the Lie algebra of polynomial vector fields on a circle, $\Vir = \Der \C[t, t^{-1}]$,
the module $V(0, \Z)$ is interpreted as the module of functions on the circle,
$V(0, \Z) \cong \C[t,t^{-1}]$ and $V(1, \Z)$ is the module of 1-forms on the circle.
$V(0, \Z)$ has a unique proper submodule spanned by $v_0$ (constant function), and 
$V(1, \Z)$ has a unique proper submodule spanned by $\{ v_k \, | \, k \neq 0 \}$ (exact 1-forms).
The quotient ${\overline V} = V(0, \Z) / \left< v_0 \right>$ is a simple $\Vir$-module and it is isomorphic to the proper submodule of $V(1, \Z)$. The corresponding isomorphism is induced by the differential map 
$d: \, \C[t,t^{-1}] \rightarrow \C[t,t^{-1}] dt$, where $d(t^k) = k t^{k-1} dt$.


Simple weight modules for $\Vir$ were classified by Mathieu \cite{M}. It follows from his classification that simple objects in the category of cuspidal modules are simple tensor modules, ${\overline V}$ and the trivial 1-dimensional module $\C$.

%
%

Let us discuss the blocks of the category of cuspidal modules. There is an obvious partition of $\Cusp$ with respect to the spectrum of operator $L_0$. 
Indeed, any weight $\Vir$-module decomposes into a direct sum of submodules supported on $\Z$ cosets in $\C$, 
\begin{equation}
\label{dirsum}
M = \mathop\oplus\limits_{\beta \in\C/\Z} M[\beta] , \text{\ where \ }
M[\beta]  =   \mathop\oplus\limits_{\lambda \in \beta} M_\lambda .
\end{equation}

It turns out that this decomposition of $\Cusp$ into a direct sum of subcategories may be further refined. Let us quote the following lemma:

\begin{lemma}
\label{catext}
(\cite{J}, \cite{NS})
Let $\Cat$ be an abelian category in which every object has a finite composition series. Let 
$\Irr = \mathop\cup\limits^{\cdot}_{\gamma \in S} \Irr(\gamma)$ be a partitioning of simple objects in 
$\Cat$ satisfying the property that for $M_1 \in \Irr(\gamma_1)$, $M_2 \in \Irr(\gamma_2)$ with $\gamma_1 \neq \gamma_2$ we have $\Ext^1_\Cat (M_1, M_2) = 0$.
Let $\Cat(\gamma)$ be a subcategory of objects that have all simple factors in $\Irr(\gamma)$.
For $M \in \Cat$ and $\gamma\in S$ define $M[\gamma]$ to be the sum of all subobjects in $M$ which belong to $\Cat(\gamma)$.

 Then for every $M, M^\prime \in \Cat$

(a) $M = \oplus_{\gamma\in S} M[\gamma]$,

(b) $\Hom_\Cat (M, M^\prime) = \oplus_{\gamma\in S} 
\Hom_\Cat (M[\gamma], M^\prime[\gamma])$. 
\end{lemma}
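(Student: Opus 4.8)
This is the standard ``block decomposition'' argument; here is the plan. The first step is to record the closure properties of the subcategories $\Cat(\gamma)$. Since the composition factors of a subobject or a quotient of $X$ occur among those of $X$, and those of $X_1 \oplus X_2$ are the union of those of $X_1$ and $X_2$, each $\Cat(\gamma)$ is closed under subobjects, quotients and finite direct sums, and consequently (by the five lemma, or by counting composition factors) under extensions. In particular, if $P_1, P_2 \subseteq M$ both lie in $\Cat(\gamma)$, then $P_1 + P_2$, being a quotient of $P_1 \oplus P_2$, lies in $\Cat(\gamma)$ as well. Using that $M$ has finite length, the set of subobjects of $M$ belonging to $\Cat(\gamma)$ has a maximal element, which by the previous remark contains every such subobject; this maximal element is exactly $M[\gamma]$, and it lies in $\Cat(\gamma)$.

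Next I would show that the sum $\sum_{\gamma \in S} M[\gamma]$ is direct. Fix $\gamma$ and consider $M[\gamma] \cap \bigl(\sum_{\delta \neq \gamma} M[\delta]\bigr)$. As a subobject of $M[\gamma]$ it lies in $\Cat(\gamma)$, while as a subobject of $\sum_{\delta \neq \gamma} M[\delta]$ — a quotient of $\bigoplus_{\delta \neq \gamma} M[\delta]$ — all its composition factors lie in $\bigcup_{\delta \neq \gamma} \Irr(\delta)$. Since $\Irr(\gamma)$ is disjoint from $\bigcup_{\delta \neq \gamma} \Irr(\delta)$, this intersection has no composition factors, hence is zero, and the sum is direct. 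Write $N = \bigoplus_{\gamma} M[\gamma]$.

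The heart of the proof is the claim $N = M$, and this is where the $\Ext^1$-hypothesis is used. Suppose $M/N \neq 0$; since every object has finite length, $M/N$ has a simple subobject, whose preimage $S' \subseteq M$ satisfies $N \subseteq S'$ with $S'/N \cong T$ simple, say $T \in \Irr(\gamma)$. Passing to the quotient by $M[\gamma] \subseteq N \subseteq S'$ gives a short exact sequence
\[
0 \longrightarrow N/M[\gamma] \longrightarrow S'/M[\gamma] \longrightarrow T \longrightarrow 0 ,
\]
and $N/M[\gamma] \cong \bigoplus_{\delta \neq \gamma} M[\delta]$ has all its composition factors outside $\Irr(\gamma)$. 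By induction on length, using the long exact $\Ext$-sequence for a composition series and the hypothesis $\Ext^1_\Cat(T, M_2) = 0$ for every simple $M_2 \notin \Irr(\gamma)$, one gets $\Ext^1_\Cat(T, N/M[\gamma]) = 0$, so the sequence splits. Pulling the copy of $T$ back to $S'$ produces a subobject $Q \subseteq M$ with $M[\gamma] \subsetneq Q$ and $Q/M[\gamma] \cong T$; then $Q$ is an extension of $T \in \Irr(\gamma)$ by $M[\gamma] \in \Cat(\gamma)$, hence $Q \in \Cat(\gamma)$, which forces $Q \subseteq M[\gamma]$ — contradicting $M[\gamma] \subsetneq Q$. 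Therefore $M = N = \bigoplus_\gamma M[\gamma]$, proving (a).

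Part (b) is then formal. Decomposing $M = \bigoplus_\gamma M[\gamma]$ and $M' = \bigoplus_\delta M'[\delta]$ (with only finitely many nonzero summands each, by finite length), we get $\Hom_\Cat(M, M') = \bigoplus_{\gamma, \delta} \Hom_\Cat(M[\gamma], M'[\delta])$, so it suffices to see that $\Hom_\Cat(M[\gamma], M'[\delta]) = 0$ when $\gamma \neq \delta$. The image of any such morphism is simultaneously a quotient of $M[\gamma]$, hence in $\Cat(\gamma)$, and a subobject of $M'[\delta]$, hence in $\Cat(\delta)$; a nonzero object of $\Cat(\gamma) \cap \Cat(\delta)$ would have a composition factor in $\Irr(\gamma) \cap \Irr(\delta) = \emptyset$, so the morphism vanishes, giving (b). The only non-bookkeeping step, and the main obstacle, is the splitting argument in the third paragraph: everything else is an exercise in tracking composition factors.
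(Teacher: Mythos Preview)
Your proof is correct and is precisely the standard block-decomposition argument. Note, however, that the paper does not actually prove this lemma: it is quoted verbatim with citations to Jantzen \cite{J} and Neher--Savage \cite{NS}, and no argument is given in the paper itself. So there is no ``paper's own proof'' to compare against; your write-up supplies what the paper omits by reference, and matches the argument one finds in those sources.
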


We partition irreducible objects in category $\Cusp$ by the set $S = \C/\Z \times \C/\Z$ with 
$V(\alpha, \beta) \in \Irr(\alpha+\Z, \beta)$ and ${\overline V}, \C \in \Irr(\Z, \Z)$. For $i=1,2$ 
fix $\gamma_i = (\alpha_i, \beta_i)$ with $\alpha_i, \beta_i \in \C/\Z$ and let $M_i \in \Irr(\gamma_i)$.
It follows from (\ref{dirsum}) that $\Ext^1 (M_1, M_2) = 0$ when $\beta_1 \neq \beta_2$. If $\beta_1 = \beta_2$,
it can be seen from the results of \cite{MP}, \cite{CKW}, \cite{D} that $\Ext^1 (M_1, M_2) = 0$ when the cosets 
$\alpha_1, \alpha_2$ are distinct.

By Lemma \ref{catext} we get that a superconformal algebra $\LL$ decomposes as a $\Vir$-module:
\begin{equation*}
\LL = \mathop\oplus\limits_{\alpha, \beta \in \C/\Z}\LL[\alpha, \beta], 
\end{equation*}
where $\LL[\alpha, \beta]$ is the sum of $\Vir$-submodules in $\LL$ with all simple factors of their composition series 
in $\Irr(\alpha, \beta)$.

\begin{theorem}
\label{grading}
Let $\LL$ be a superconformal algebra. Then the decomposition
\begin{equation*}
\LL = \mathop\oplus\limits_{\alpha, \beta \in \C/\Z}\LL[\alpha, \beta], 
\end{equation*}
is a grading of $\LL$ as a Lie superalgebra.
\end{theorem}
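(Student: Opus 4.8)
The plan is to use that $\Vir\subseteq\LL^0$, so that its adjoint action is by \emph{even} derivations and the Lie bracket is a homomorphism of $\Vir$-modules $[\,\cdot\,,\,\cdot\,]\colon\LL\otimes\LL\to\LL$, with $\LL\otimes\LL$ carrying the diagonal action. Then, for blocks $\gamma_1=(\alpha_1,\beta_1)$ and $\gamma_2=(\alpha_2,\beta_2)$, the subspace $[\LL[\gamma_1],\LL[\gamma_2]]$ is the image of the restriction of this homomorphism to $\LL[\gamma_1]\otimes\LL[\gamma_2]$, hence a $\Vir$-submodule of $\LL$; being a submodule of the cuspidal module $\LL$ it is cuspidal, hence of finite length. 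The key point to establish is then that every composition factor of $[\LL[\gamma_1],\LL[\gamma_2]]$ lies in $\Irr(\gamma_1+\gamma_2)$: granting this, $[\LL[\gamma_1],\LL[\gamma_2]]$ is an object of $\Cat(\gamma_1+\gamma_2)$, so Lemma \ref{catext} places it inside $\LL[\gamma_1+\gamma_2]$, which is exactly the multiplicativity $[\LL[\gamma_1],\LL[\gamma_2]]\subseteq\LL[\gamma_1+\gamma_2]$. Together with the known decomposition $\LL=\oplus_{\alpha,\beta}\LL[\alpha,\beta]$ this realizes $\LL$ as a Lie superalgebra graded by $\C/\Z\times\C/\Z$, compatibly with parity: since $\LL^0$ and $\LL^1$ are $\Vir$-submodules, Lemma \ref{catext}(a) applied to each of them gives $\LL[\gamma]=(\LL^0\cap\LL[\gamma])\oplus(\LL^1\cap\LL[\gamma])$.

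To control composition factors I would pass to simple modules. Filtering $\LL[\gamma_1]$ and $\LL[\gamma_2]$ by $\Vir$-submodules with simple subquotients — all of which lie in $\Irr(\gamma_1)$, resp.\ $\Irr(\gamma_2)$, by the definition of the blocks — one obtains a filtration of $\LL[\gamma_1]\otimes\LL[\gamma_2]$ whose subquotients have the form $M_1\otimes M_2$ with $M_i\in\Irr(\gamma_i)$ simple. Every simple subquotient of $\LL[\gamma_1]\otimes\LL[\gamma_2]$ — in particular every composition factor of $[\LL[\gamma_1],\LL[\gamma_2]]$, which is a quotient of $\LL[\gamma_1]\otimes\LL[\gamma_2]$ — is therefore a simple subquotient of some $M_1\otimes M_2$. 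So it suffices to prove: if $M_1\in\Irr(\gamma_1)$ and $M_2\in\Irr(\gamma_2)$ are simple cuspidal modules, then every simple cuspidal subquotient of the $\Vir$-module $M_1\otimes M_2$ lies in $\Irr(\gamma_1+\gamma_2)$. (Note that $M_1\otimes M_2$ itself is in general not cuspidal, but its finite-length subquotients are, and only these occur above.)

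For this statement the second block coordinate is immediate: the $L_0$-weights of $M_1\otimes M_2$ lie in the single $\Z$-coset $\beta_1+\beta_2$, hence so do the weights of any subquotient. The content is the first coordinate. I would first reduce to the case where $M_1,M_2$ are tensor modules, using that each of ${\overline V}$ and $\C$ is a subquotient of $V(0,\Z)$, whose first block coordinate $\Z$ coincides with theirs; replacing a factor ${\overline V}$ or $\C$ by $V(0,\Z)$ leaves $\gamma_1+\gamma_2$ unchanged and only enlarges the set of subquotients to be controlled. It then remains to show that every composition factor of $V(\alpha_1,\beta_1)\otimes V(\alpha_2,\beta_2)$ has first block coordinate $\alpha_1+\alpha_2+\Z$. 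This is the classical structure of tensor products of tensor modules over the Virasoro algebra: $V(\alpha_1,\beta_1)\otimes V(\alpha_2,\beta_2)$ has an exhaustive $\Vir$-submodule filtration with successive quotients the ``transvectant'' modules $V(\alpha_1+\alpha_2+k,\,\beta_1+\beta_2)$, $k\in\Z_{\geq 0}$ (together with their composition factors ${\overline V},\C$ in the degenerate integral cases), and in every case the first block coordinate of a composition factor is $\alpha_1+\alpha_2+\Z$.

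I expect this last step to be the main obstacle — the rest is formal bookkeeping with Lemma \ref{catext} and finite filtrations. For a self-contained proof I would write a hypothetical $\Vir$-intertwiner $V(\alpha_1,\beta_1)\otimes V(\alpha_2,\beta_2)\to V(\alpha_3,\beta_3)$ in the standard bases as $v_i\otimes v'_j\mapsto c_{i,j}\,w_{i+j}$, read off from $\Vir$-equivariance the relations
$$(i+j+\alpha_3 n)\,c_{i,j}=(i+\alpha_1 n)\,c_{i+n,j}+(j+\alpha_2 n)\,c_{i,j+n}\qquad(n\in\Z),$$
and show that a nonzero solution can exist only when $\beta_3=\beta_1+\beta_2$ and $\alpha_3-\alpha_1-\alpha_2\in\Z$; applying this to the composition factors of $V(\alpha_1,\beta_1)\otimes V(\alpha_2,\beta_2)$ gives the required constraint, and hence the theorem.
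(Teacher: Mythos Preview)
Your proposal is correct and follows essentially the same route as the paper: reduce via composition series to nonzero $\Vir$-equivariant maps between simple cuspidal modules, then use that such maps $V(\alpha_1,\beta_1)\times V(\alpha_2,\beta_2)\to V(\alpha_3,\beta_3)$ force $\beta_3=\beta_1+\beta_2$ and $\alpha_3\in\alpha_1+\alpha_2+\Z$. The only substantive difference is that the paper invokes the Iohara--Mathieu classification of such equivariant maps for this last constraint rather than deriving it from the recursion you wrote down (and the paper is slightly less explicit than you are about the reduction from ${\overline V}$ and $\C$ to tensor modules).
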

\begin{proof}
Iohara-Mathieu \cite{IM} classified equivariant maps of tensor $\Vir$-modules
\begin{equation}
\label{eqmap}
V(\alpha_1, \beta_1) \times V(\alpha_2, \beta_2) \rightarrow V(\alpha_3, \beta_3).
\end{equation}
It follows from their classification that a necessary condition for the existence of a non-zero equivariant map (\ref{eqmap}) is
$\beta_3 = \beta_1 + \beta_2$ and $\alpha_3 \in \alpha_1 + \alpha_2 + \Z$. This implies the claim of the Theorem.
Indeed, if
$\left[ \LL[\alpha_1 + \Z, \beta_1], \LL[\alpha_2 + \Z, \beta_2] \right] \not\subset  
\LL[\alpha_1 + \alpha_2 + \Z, \beta_1 + \beta_2]$,
we get a non-zero equivariant map $\LL[\alpha_1 + \Z, \beta_1] \times \LL[\alpha_2 + \Z, \beta_2] \rightarrow  
\LL[\alpha_3 + \Z, \beta_3]$ with $(\alpha_3 + \Z, \beta_3) \neq (\alpha_1 + \Z, \beta_1)  + (\alpha_2 + \Z, \beta_2)$. Considering composition series filtrations in $\LL[\alpha_i + \Z, \beta_i]$, $i=1,2,3$, 
we can construct a non-zero equivariant map between simple modules in these composition series, contradicting the result 
of \cite{IM}. 
\end{proof}

It follows from the description of simple cuspidal modules that for indecomposable cuspidal modules dimensions of weight spaces are actually constant outside zero weight, and in the zero weight component we may have an abnormal behaviour, such as a gap, as in the case of ${\overline V}$. Below we will present a regularization method developed in \cite{BF} that allows us to fix this irregularity.
 Its idea is to attach to a cuspidal module a new $\Vir$-module which admits an additional action of the algebra of Laurent polynomials. Below we will denote the Virasoro algebra by $\V$ and the commutative algebra of functions by 
$\A = \C[t, t^{-1}]$.

We note that $\A$ is a module for the Lie algebra $\V$ with the action $L_n t^k = - k t^{k+n}$,
and $\V$ is a module for the commutative unital algebra $\A$ with the action 
$t^k L_n = L_{n+k}$.

\begin{definition}
We call $M$ an $\AV$-module if it is a module for the Lie algebra $\V$, a module for commutative unital algebra $\A$ and these two actions are compatible via the Leibniz rule:
$$\eta (f m) = \eta(f) m + f (\eta m),$$
for $\eta \in \V$, $f \in \A$, $m\in M$.
\end{definition} 

It is easy to check that tensor modules are $\AV$-modules. They are all simple in the category of $\AV$-modules.

It is easy to see that a cuspidal $\AV$-module is actually a free $\A$-module of a finite rank, it is freely generated as an $\A$-module by any of its weight spaces. This shows that cuspidal $\AV$-modules do not have abnormal behaviour at the zero weight space.

Let us show how to attach an $\AV$-module to a cuspidal $\Vir$-module.

Let $M$ be a $\Vir$-module. We first note that the coinduced module $\Hom_\C (\A, M)$
is an $\AV$-module with the action
\begin{align*}
&(f \varphi) (g) = \varphi (fg), \\
&(\eta \varphi) (g) = \eta( \varphi (g)) - \varphi(\eta(g)),
\end{align*}
where $\varphi \in \Hom_\C (\A, M)$, $\eta \in \V$, $f, g \in \A$ (see \cite{BF} for details).
The coinduced module is too big, and we wish to construct a smaller $\AV$-module.

\begin{definition}
Let $M$ be a $\Vir$-module. Its $\AV$ cover $\hM$ is a submodule in $\Hom_\C (\A, M)$
spanned by 
$$\{ \psi( \eta, m) \, | \, \eta \in \V, m \in M \}, $$
where
$$ \psi (\eta, m) (g) = (g \eta) m .$$
\end{definition}

\begin{theorem} [\cite{BF}]
\label{cusp}
Let $M$ be a cuspidal $\Vir$-module. Then

(a) There exists $N \in \N$ such that 
$$ 
\sum_{a=0}^N (-1)^a {N \choose a} L_{p+a} L_{q-a} = 0 \text{\ in } M,$$

(b) The cover $\hM$ of $M$ is a cuspidal $\AV$-module,

(c) The map $\pi: \, \hM \rightarrow M$, $\pi(\psi(\eta, m)) = \eta m$ is a homomorphism of $\V$-modules with $\Image (\pi) = \V M$.
\end{theorem}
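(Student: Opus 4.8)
The plan is to prove (a) first, then bootstrap (b) from it; (c) is essentially formal. For (c): I would observe that $\pi$ is nothing but evaluation at $1\in\A$. Indeed $\psi(\eta,m)(1)=(1\cdot\eta)m=\eta m$, so the rule $\psi(\eta,m)\mapsto\eta m$ is the restriction to $\hM$ of the well-defined $\C$-linear map $\Hom_\C(\A,M)\to M$, $\varphi\mapsto\varphi(1)$; since $\eta(1)=0$ for every derivation $\eta\in\V$, the coinduced-module formula gives $(\eta\varphi)(1)=\eta(\varphi(1))-\varphi(\eta(1))=\eta(\varphi(1))$, so $\pi$ is a homomorphism of $\V$-modules, and $\Image(\pi)=\Span\{\eta m\mid\eta\in\V,\ m\in M\}=\V M$ by construction.

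For (a), I would first reduce to a single $\Z$-coset: the operators $L_n$ ($n\in\Z$) preserve each block $M[\beta]$, so it suffices to establish the identity on each of the finitely many relevant $M[\beta]$ and take the maximum of the resulting $N$'s. By the classification of simple cuspidal $\Vir$-modules recalled above, $M[\beta]$ has a finite composition series whose simple subquotients are tensor modules $V(\alpha,\beta)$, copies of $\overline V$, or trivial modules $\C$. On a tensor module one computes $L_pL_qv_k=(k+\alpha q)(k+q+\alpha p)v_{p+q+k}$, which for fixed $s=p+q$ is a quadratic function of $p$; since the $N$-th forward difference of a polynomial of degree $<N$ vanishes, $N=3$ works on $V(\alpha,\beta)$, hence also on its subquotients $\overline V$, while $\C$ is trivial. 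It then remains to propagate the identity along a short exact sequence $0\to A\to M\to B\to 0$: if it holds on $A$ and $B$ with parameter $N$, I must obtain it on $M$ with some larger parameter (roughly $2N$).

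This propagation step is where I expect the real difficulty. One cannot simply compose finite-difference operators, because locality is a property of the composite $L(z)L(w)=\sum L_pL_q\,z^{-p-1}w^{-q-1}$ and not of $L(z)$ alone. The route I would take is to exploit that an indecomposable cuspidal $\Vir$-module is, up to subquotient, a \emph{polynomial family}: it sits inside $\A\otimes U$ with $U$ finite-dimensional and $L_n$ acting as a first-order differential operator $-t^{n+1}\frac{d}{dt}\otimes 1+t^n\otimes e_n$ with $e_n$ polynomial in $n$, so that $L(z)L(w)$ acts through a second-order differential operator whose only singularity, along $z=w$, has bounded order. Alternatively one may avoid the classification entirely: fixing $m$ and $s$, all the vectors $L_{s-n}L_nm$ lie in a single weight space of dimension $\le K$, and repeated use of $[L_{\pm1},L_j]=(\pm1-j)L_{\pm1+j}$ produces a finite linear recursion, with coefficients polynomial in $n$, satisfied by the sequence $n\mapsto L_{s-n}L_nm$; boundedness of the weight spaces then forces this sequence to be polynomial of bounded degree, which is (a). Either way the core is a finite but somewhat delicate bookkeeping.

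Finally, for (b): $\hM$ is an $\AV$-submodule of $\Hom_\C(\A,M)$ by construction, being $\A$-stable since $t^k\psi(\eta,m)=\psi(t^k\eta,m)$ and $\V$-stable since $\zeta\cdot\psi(\eta,m)=\psi(\eta,\zeta m)-\psi([\eta,\zeta],m)$, so only cuspidality is at issue. Linearity of $\psi$ in its first argument and the relation $t^kL_n=L_{n+k}$ give $\psi(L_n,m)=t^n\psi(L_0,m)$, whence $\hM=\A\cdot\Span\{\psi(L_0,m)\mid m\in M\}$; and a short computation gives $L_0\psi(L_0,m)=\mu\,\psi(L_0,m)$ for $m\in M_\mu$, so $\hM$ is a weight module supported on the same finite union of cosets as $M$. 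Since $M$ has finite length with tensor-type composition factors it is generated over $\Vir$ by a finite-dimensional subspace, and the identity $\psi(\eta,\zeta m)=\zeta\cdot\psi(\eta,m)+\psi([\eta,\zeta],m)$ reduces $\A$-generation of $\hM$ to $\psi(L_0,-)$ applied to that subspace and its iterated $\Vir$-images. Here part (a) enters: transported through the cover formula it gives the relations $\sum_{a=0}^N(-1)^a\binom{N}{a}t^a\psi(L_0,L_{q-a}m)=0$ in $\hM$, which express $\psi(L_0,L_qm)$ as an $\A$-combination of $\psi(L_0,L_{q-1}m),\dots,\psi(L_0,L_{q-N}m)$; iterating these (and the analogous identities as $m$ varies) cuts the set of $\A$-generators down to a finite one. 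A finitely generated weight module over $\A=\C[t,t^{-1}]$ generated by weight vectors has uniformly bounded weight multiplicities, hence is cuspidal; being also an $\AV$-module it is then automatically free of finite rank, as stated above.
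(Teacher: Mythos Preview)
The paper does not give its own proof of this theorem; it is quoted from \cite{BF} and used as a black box. So there is nothing in-paper to compare against, and I can only assess your proposal on its own merits.

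Your treatment of (c) is correct: $\pi$ is evaluation at $1$, and $\V$-equivariance is immediate from the coinduced action. Your reduction of (b) to (a) is also essentially right. The identity $t^k\psi(\eta,m)=\psi(t^k\eta,m)$ and the $\V$-stability formula are correct, and the relation $\sum_a(-1)^a\binom{N}{a}t^a\psi(L_0,L_{q-a}m)=0$ does follow from (a) exactly as you say. The bookkeeping you gesture at can be made precise: for $m\in M_{\lambda+n}$ with $\lambda+n\neq 0$ one writes $m=(\lambda+n)^{-1}L_0m$ and uses the relation to push the index $n$ into a window of length $N$, so $\hM_\lambda$ is spanned by finitely many $\psi(L_n,m)$. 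The single bad weight $\lambda+n=0$ contributes at most $\dim M_0$ extra generators.

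The genuine gap is in (a). Your first route---propagation along $0\to A\to M\to B\to 0$---does not work as an abstract argument, and you correctly sense this. Knowing that $\Omega:=\Delta_{p,q}^N(L_pL_q)$ vanishes on $A$ and on $B$ tells you $\Omega$ maps $M$ into $A$ and annihilates $A$, hence $\Omega^2=0$ as an operator; but $\Delta_{p,q}^{2N}(L_pL_q)$ is \emph{not} the operator square $\Omega^2$, it is a different linear combination of $L_{p+a}L_{q-a}$, so nothing follows. Your ``polynomial family'' alternative is circular: the realization of an indecomposable cuspidal module inside $\A\otimes U$ with $L_n$ polynomial in $n$ is precisely Theorem~\ref{coord} applied to the cover $\hM$, whose cuspidality is part (b), which you are deriving from (a). The recursion sketch is closer to what \cite{BF} actually does, but ``bounded weight spaces force polynomiality'' needs a concrete identity in $U(\Vir)$ relating the operators $\Omega^{(N)}_{p,q}$ for consecutive $N$; you have not supplied one, and without it the argument does not close.
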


It is clear from the definition that for $M \in \Cusp[\alpha, \beta]$, its cover $\hM$ belongs to the same block.

 The advantage of using $\AV$-modules is that the structure of cuspidal $\AV$-modules is 
well-understood.

 Denote by $W_+$ a Lie algebra with a basis $\{ \te_n \, | \, n \geq 0 \}$ and Lie
bracket $[\te_n, \te_k] = (k-n) \te_{n+k}$. Even though $W_+$ could be embedded into the Virasoro algebra, we will not view it as a subalgebra in $\Vir$. Let us recall some results from \cite{Bi} on finite-dimensional $W_+$-modules:
\begin{lemma}
\label{Lplus}
Let $U$ be a finite-dimensional $W_+$-module. 

(a) There exists $N \in \N$ such that $\te_n U = 0$ for all $n > N$.

(b) If $U$ is irreducible then it has dimension 1, $U = \C u$ and there exists $\alpha \in \C$ such that $\te_0 u = \alpha u$ and $\te_n u = 0$ for $n \geq 1$.
\end{lemma}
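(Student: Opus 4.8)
The plan is to establish (a) first and then deduce (b) from it. For (a), I would let $E_n \in \End(U)$ denote the operator by which $e_n$ acts, so that $[E_n, E_k] = (k-n)E_{n+k}$, and in particular $[E_0, E_n] = n E_n$. Decompose $U$ into generalized eigenspaces of $E_0$, $U = \bigoplus_\lambda U_\lambda$ with $U_\lambda = \Ker (E_0 - \lambda)^{\dim U}$; finite-dimensionality of $U$ guarantees that only finitely many $\lambda$ occur. The identity $(E_0 - \lambda - n)E_n = E_n(E_0 - \lambda)$, which follows at once from $[E_0, E_n] = n E_n$, shows that $E_n U_\lambda \subseteq U_{\lambda + n}$. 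Hence if $E_n U \neq 0$ then $\lambda + n$ must itself be an eigenvalue of $E_0$ for some eigenvalue $\lambda$; since $n$ ranges over positive integers while there are only finitely many eigenvalues, this forces $n$ into a finite set, and taking $N$ to be the largest element of that set (or $N = 0$ if it is empty) gives $e_n U = 0$ for all $n > N$.

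For (b), I would observe that by (a) the subspace $I = \bigoplus_{n > N}\C e_n$ is an ideal of $W_+$ — the bracket preserves the grading and strictly raises it — and $I$ annihilates $U$, so the $U$-action factors through the finite-dimensional quotient $W_+^{(N)} := W_+ / I$, whose underlying space is $\bigoplus_{0 \le n \le N}\C e_n$. This quotient is solvable: $\bigoplus_{1 \le n \le N}\C e_n$ is an ideal with one-dimensional abelian quotient, and it is itself nilpotent because iterated brackets strictly raise the grading, which takes values in $\{1, \dots, N\}$. An irreducible finite-dimensional module over a solvable Lie algebra over $\C$ is one-dimensional by Lie's theorem, so $U = \C u$. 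On a one-dimensional module every $E_n$ is a scalar, whence $[E_0, E_n] = 0$; comparing with $[E_0, E_n] = n E_n$ yields $E_n = 0$ for $n \ge 1$, while $E_0 u = \alpha u$ for some $\alpha \in \C$. This is exactly the asserted description.

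The crux is part (a): its whole content is that a finite-dimensional module cannot have $e_n$ acting nontrivially for infinitely many $n$, and the generalized-eigenspace argument is precisely where finite-dimensionality of $U$ genuinely enters. The one point to handle with a little care is that the eigenvalue differences of $E_0$ are arbitrary complex numbers, whereas $n$ runs over $\Z_{\ge 1}$, so a resonance $\mu = \lambda + n$ can occur for only finitely many $n$. Everything afterwards — the reduction to a finite-dimensional solvable Lie algebra, the appeal to Lie's theorem, and the final scalar computation — is routine.
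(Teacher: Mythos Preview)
Your proof is correct. The paper does not supply its own argument for this lemma; it simply recalls the statement from \cite{Bi}. Your approach---decomposing $U$ into generalized eigenspaces of $E_0$ and using $E_n U_\lambda \subseteq U_{\lambda+n}$ to bound $n$ for part (a), then passing to the finite-dimensional solvable quotient $W_+/\bigoplus_{n>N}\C e_n$ and invoking Lie's theorem for part (b)---is a standard and complete self-contained proof of what the paper takes as a cited fact.
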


\begin{theorem}
\label{coord}
Let $M$ be a cuspidal $\AV$-module in a block $\Cusp(\alpha, \beta)$, $\alpha, \beta \in \C/\Z$, 
and let $U$ be one of its weight spaces. 
 Then
$$ M \cong  \mathop\oplus\limits_{k \in \beta} t^k \otimes U. $$
The space $U$ admits the structure of a $W_+$-module, so that the action of $\Vir$ is given by
$$L_n (t^k \otimes u) = - k t^{n + k} \otimes u 
- \sum_{s \geq 1} \frac{n^s}{s!} t^{n + k} \otimes \te_{s-1} u,$$
while $\A$ acts on the left tensor factor by multiplication. 
\end{theorem}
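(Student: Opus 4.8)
The plan is to exploit the $\AV$-module structure from Theorem~\ref{cusp} together with the classification in Lemma~\ref{Lplus}. Since $M$ lies in a single block $\Cusp(\alpha,\beta)$, it is supported on the single coset $\beta$, so as a $\C$-vector space $M = \oplus_{k\in\beta} M_k$ with each weight space isomorphic to the fixed weight space $U = M_{k_0}$ (all weight spaces of an indecomposable cuspidal $\AV$-module have the same dimension, as noted in the excerpt). First I would use that $M$ is a cuspidal $\AV$-module, hence a \emph{free} $\A$-module of finite rank, freely generated by any weight space; in particular, picking $U$ to be one weight space, the map $t^k\otimes u \mapsto t^k\cdot u$ gives an $\A$-module isomorphism $\A\otimes U \cong M$, which establishes the displayed direct sum decomposition and pins down how $\A$ acts (by multiplication on the left factor).

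The substance is to determine the $\V$-action in these coordinates. Because $M$ is an $\AV$-module, the Leibniz rule $L_n(t^k u) = L_n(t^k)\,u + t^k\,(L_n u) = -k\,t^{n+k}u + t^k(L_n u)$ reduces everything to computing $L_n u$ for $u\in U$, and by the $\A$-freeness $L_n u$ lies in $t^n U$, i.e. $L_n u = t^n\otimes \theta_n(u)$ for some linear operators $\theta_n\colon U\to U$. I would next extract the Lie-algebra constraints on the family $\{\theta_n\}$ from $[L_n,L_m]=(n-m)L_{n+m}$: applied to $u\in U$ and using the Leibniz rule once more to move the $t$-powers past the $L$'s, this yields a quadratic-type relation among the $\theta_n$. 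The claim is that it is solved precisely by setting $\theta_n = \sum_{s\geq 0}\frac{n^s}{s!}\,e_{s-1}$ with the convention $e_{-1}=0$ (so $\theta_0 = \mathrm{id}$ up to the scalar normalization absorbed into the $k$-term — one must be slightly careful about whether $\theta_0$ is $0$ or reflects a shift, but the displayed formula has no $\theta_0$ term, consistent with $L_n$ acting with the $-k$ coefficient), where the operators $e_{j}\colon U\to U$ are nilpotent (by finite-dimensionality, Lemma~\ref{Lplus}(a) gives $e_n = 0$ for $n$ large, so the sum over $s$ is finite). Substituting this ansatz into the constraint and comparing coefficients of powers of $n$ and $m$ should collapse to exactly the relations $[e_p, e_q] = (q-p)\,e_{p+q}$ defining $W_+$, by a generating-function manipulation: the exponential generating functions $\sum n^s/s!$ are designed so that the quadratic $L$-relation becomes the $W_+$-bracket relation.

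The main obstacle I anticipate is the bookkeeping in this last comparison — verifying that the single relation coming from $[L_n,L_m]=(n-m)L_{n+m}$ is equivalent to the full set of relations $[e_p,e_q]=(q-p)e_{p+q}$ for all $p,q\geq 0$, and in particular that no additional relations are imposed on $U$. The clean way to handle it is to introduce the formal variable generating function $L(x) = \sum_s \frac{x^s}{s!}$-style weighting and recognize the Virasoro relation, after clearing the $t$-shifts, as the statement that $x\mapsto \theta(x)$ is a Lie algebra map from (a completion of) $W_+$; then finite-dimensionality of $U$ forces the series to truncate. A secondary point to check carefully is well-definedness of the $W_+$-action independent of the choice of weight space $U$: different weight spaces are identified via multiplication by a power of $t$, and one must confirm the $e_j$ transport consistently under this identification, which follows because multiplication by $t^{k}$ is an $\A$-module isomorphism commuting with the construction. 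Once these are in place, the formula for $L_n(t^k\otimes u)$ follows by assembling the Leibniz-rule computation with the determined $\theta_n$, completing the proof.
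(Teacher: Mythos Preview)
The paper does not actually supply a proof of Theorem~\ref{coord}; it is quoted as a known result, with the surrounding machinery attributed to \cite{Bi} and \cite{BF}. So let me evaluate your argument on its own terms and against that standard proof.

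Your reduction is correct and matches the intended approach: freeness of a cuspidal $\AV$-module over $\A$ gives the identification $M\cong\bigoplus_{k\in\beta}t^k\otimes U$, and the Leibniz rule reduces the $\V$-action to a family of operators $\theta_n\in\End(U)$ with $L_n(t^k\otimes u)=t^{n+k}\otimes(-k+\theta_n)u$. The Virasoro relation then becomes the functional equation
\[
(n-m)\,\theta_{n+m}=[\theta_n,\theta_m]+n\theta_n-m\theta_m,
\]
and you are right that \emph{if} one writes $\theta_n=-\sum_{s\ge1}\tfrac{n^s}{s!}e_{s-1}$, comparing coefficients of $n^a m^b$ gives exactly $[e_p,e_q]=(q-p)e_{p+q}$.

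The genuine gap is the step you flag as an ``ansatz'': you never argue that $\theta_n$ is a polynomial function of $n$ in the first place. Nothing in the functional equation alone, viewed as a recursion, makes this transparent, and your subsequent appeal to Lemma~\ref{Lplus}(a) to truncate the sum is premature, since that lemma presupposes the $W_+$-structure you are constructing. In the standard proof this is exactly where the cuspidal hypothesis enters, via Theorem~\ref{cusp}(a): writing the identity $\sum_{a}(-1)^a\binom{N}{a}L_{p+a}L_{q-a}=0$ in your coordinates and extracting the coefficient linear in $k$ yields $\Delta_p^{N}\theta_p=\text{const}$, hence $\theta_n$ is a polynomial in $n$ of bounded degree (Lemma~\ref{properties}(a)). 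Only then can one \emph{define} the finitely many operators $e_j$ as its coefficients and verify the $W_+$ relations. Insert this step and your outline goes through; without it the argument is incomplete.
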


Note that by Lemma \ref{Lplus} (a) the sum in the above formula is finite.

\section{Discrete derivative}

One of the tools that we will be using in this paper is the notion of the discrete derivative. 

\begin{definition}
Let $F(p)$ be a function $\alpha \rightarrow R$, where $\alpha$ is a coset of $\Z$ in $\C$ and $R$  is an abelian group. 
The discrete derivative $\Delta_p F(p)$ is a function $\alpha \rightarrow R$ defined as
$$\Delta_p F(p) = F(p) - F(p+1).$$
\end{definition}

We also define a backwards discrete derivative $\Delta^{-}_{p} F(p) = F(p) - F(p-1)$.
Throughout the paper, when we write $F(p) = 0$ we mean that $F$ is a zero function. 

We will also need a two-variable version of the discrete derivative of a function 
$F(p,q): \, \alpha \times \beta \rightarrow R$, defined as 
$$\Delta_{p,q} F(p,q) = F(p,q) - F(p+1,q-1).$$ 

Let us show a relation between discrete derivative and locality:

\begin{lemma}
\label{locder}
Two distributions $x(z) = \sum_{n\in\alpha} x_n z^{-n-1}$ and $y(z) = \sum_{n\in\beta} y_n z^{-n-1}$
are mutually local if and only if there exists $N \in \N$ such that 
$$\Delta^N_{p,q} [x_p, y_q] = 0.$$ 
\end{lemma}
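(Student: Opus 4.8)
The plan is to prove the equivalence by a direct, purely formal computation with the coefficients: I would expand $(z-w)^N[x(z),y(w)]$ by the binomial theorem and recognize the coefficient of each monomial $z^{-p-1}w^{-q-1}$ as a value of $\Delta^N_{p,q}[x_p,y_q]$, up to a sign and an integer shift of the indices.

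First I would write
$$[x(z),y(w)] = \sum_{p\in\alpha,\,q\in\beta} [x_p,y_q]\, z^{-p-1}w^{-q-1},$$
so that
$$(z-w)^N[x(z),y(w)] = \sum_{a=0}^N (-1)^a\binom{N}{a}\sum_{p\in\alpha,\,q\in\beta}[x_p,y_q]\, z^{N-a-p-1}\,w^{a-q-1}.$$
Collecting, for fixed $p'\in\alpha$ and $q'\in\beta$, the coefficient of $z^{-p'-1}w^{-q'-1}$, the constraints $N-a-p-1=-p'-1$ and $a-q-1=-q'-1$ force $p=p'+N-a$ and $q=q'+a$, so this coefficient is $\sum_{a=0}^N(-1)^a\binom{N}{a}[x_{p'+N-a},\,y_{q'+a}]$.

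Next I would match this against the discrete derivative. Writing $T$ for the shift operator $(TF)(p,q)=F(p+1,q-1)$ on functions $\alpha\times\beta\to\LL$, we have $\Delta_{p,q}=\mathrm{Id}-T$ and hence $\Delta^N_{p,q}=\sum_{a=0}^N(-1)^a\binom{N}{a}T^a$, i.e. $\big(\Delta^N_{p,q}[x_p,y_q]\big)(P,Q)=\sum_{a=0}^N(-1)^a\binom{N}{a}[x_{P+a},y_{Q-a}]$. Substituting $b=N-a$ in the coefficient computed above and using $\binom{N}{N-b}=\binom{N}{b}$, one gets that the coefficient of $z^{-p'-1}w^{-q'-1}$ equals $(-1)^N\big(\Delta^N_{p,q}[x_p,y_q]\big)\big|_{(p',\,q'+N)}$. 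Here it matters that $N\in\N\subseteq\Z$, so shifting $q'$ by $N$ permutes the coset $\beta$; thus as $(p',q')$ runs over $\alpha\times\beta$, so does $(p',q'+N)$.

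Finally I would conclude: $(z-w)^N[x(z),y(w)]=0$ if and only if every coefficient vanishes, which by the last identity holds if and only if $\Delta^N_{p,q}[x_p,y_q]$ is the zero function on $\alpha\times\beta$. Quantifying over $N$ on both sides yields the Lemma; in fact the computation gives the equivalence for each fixed $N$. There is no substantive obstacle here — the only thing requiring care is the bookkeeping of the index shift and the small coset remark above.
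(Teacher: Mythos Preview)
Your proof is correct and follows essentially the same approach as the paper: both expand $(z-w)^N[x(z),y(w)]$ via the binomial theorem and identify the generic coefficient with $(-1)^N\Delta^N_{p,q}[x_p,y_q]$ (up to an integer shift in the indices). Your version is in fact slightly more careful about the bookkeeping, including the remark that $N\in\Z$ keeps the shifted index in the same coset.
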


\begin{proof}
Consider the expression from the definition of locality
$$(z-w)^N [x(z), y(w)] = 
\left( \sum_{a=0}^N (-1)^{N-a} {N \choose a} z^a w^{N-a} \right)
\sum_{n,s\in\Z} [x_n, y_s] z^{-n-1} w^{-s-1} .$$
The coefficient at $z^{-p} w^{N-q}$ in the above expression is
$$\sum_{a=0}^N (-1)^{N-a} {N \choose a} [x_{p+a}, y_{q-a}]
= (-1)^N \Delta^N_{p,q}  [x_p, y_q],$$
and the claim of the lemma follows.
\end{proof}

Let us list some elementary properties of discrete derivative which we will need. In parts 
(c), (d) of the following lemma we assume that $R$ is a ring.

\begin{lemma}
\label{properties}
(a) $\Delta^N_p F(p) = 0$ if and only if $F(p)$ is a polynomial function of degree at most $N-1$.

(b) For a function $F(p,q,r): \alpha \times \beta \times \gamma \rightarrow R$,
$\alpha, \beta, \gamma \in \C/\Z$,
$$\Delta_{p,q} \Delta_{q,r} F(p,q,r) = \Delta_{q,r} \Delta_{p,q} F(p,q,r) .$$   

(c) $$\Delta_{p,q} \left( F(p,q) G(p,q) \right) 
=   \left( \Delta_{p,q} F(p,q) \right)  G(p,q) + F(p+1, q-1) \Delta_{p,q} G(p,q). $$

(d) If $\Delta^N_{p,q} F(p,q) = 0$ and $\Delta^K_{p,q} G(p,q) = 0$ then
$$\Delta^{N+K-1}_{p,q} \left( F(p,q) G(p,q) \right) = 0.$$

(e) $$\Delta_{p,q} H(p+s, q) = \Delta_{s, q} H(p+s, q).$$

(f) $$\Delta^N_{p,q} \Delta^K_{s,q} H(p+s, q) = \Delta^{N+K}_{p, q} H(p+s, q).$$
\end{lemma}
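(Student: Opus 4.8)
The plan is to prove all six parts with one device, the backward shift operator. In one variable write $T$ for the operator $(TF)(p)=F(p+1)$ on functions $\alpha\to R$, and in two variables write $S$ for the diagonal shift $(SF)(p,q)=F(p+1,q-1)$, so that $\Delta_p=\Id-T$ and $\Delta_{p,q}=\Id-S$ (and similarly for the other diagonal pairs occurring in (b)). Two properties of these shifts make the lemma run: each commutes with itself, so for instance $\Delta_{p,q}S=S\Delta_{p,q}$; and when $R$ is a ring, $S$ is a ring endomorphism of the function ring, $S(FG)=(SF)(SG)$. With this in hand, (a) is the analytic core, (b), (c), (e) are one-line identities, and (d), (f) are deduced from the others.

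I would prove (a) by induction on $N$, both directions together. The base case $N=1$ states that $\Delta_pF=0$ forces $F$ constant, which holds since $p\mapsto p+1$ acts transitively on the coset $\alpha$. For $N>1$ one uses the binomial-coefficient functions $b_k(p)=\binom{p-c}{k}$ ($c\in\alpha$ fixed, $0\le k\le N-1$), which form a basis of the polynomials of degree at most $N-1$ and satisfy $\Delta_p b_k=-b_{k-1}$. This immediately gives that $\Delta^N_p$ annihilates every such polynomial ("$\Leftarrow$"); conversely, if $\Delta^N_pF=0$ then $\Delta^{N-1}_p(\Delta_pF)=0$, so by induction $\Delta_pF=\sum_{k=0}^{N-2}c_kb_k$, and setting $\tilde F=-\sum_{k=0}^{N-2}c_kb_{k+1}$ one gets $\Delta_p\tilde F=\Delta_pF$ with $\deg\tilde F\le N-1$; applying the base case to $F-\tilde F$ finishes it.

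Parts (b), (c), (e) are direct computations: in (b) the two shifts $(p,q,r)\mapsto(p+1,q-1,r)$ and $(p,q,r)\mapsto(p,q+1,r-1)$ commute (each composite sends $F$ to $F(p+1,q,r-1)$), so $\Delta_{p,q}$ and $\Delta_{q,r}$ commute; (c) results from adding and subtracting $F(p+1,q-1)G(p,q)$ inside $F(p,q)G(p,q)-F(p+1,q-1)G(p+1,q-1)$; and in (e) both sides equal $H(p+s,q)-H(p+s+1,q-1)$. For (d) I would reduce to (a): restrict $F$ and $G$ to a fixed line $\{(p,q):p+q=c\}$, parametrized by $p$, on which $\Delta_{p,q}$ acts as the one-variable $\Delta_p$; by (a), $F$ is there a polynomial in $p$ of degree $\le N-1$ with coefficients in $R$ and $G$ one of degree $\le K-1$, so $FG$ has degree $\le N+K-2$, whence $\Delta^{N+K-1}_{p,q}(FG)=0$ by (a) once more. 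Alternatively, iterating (c) together with $\Delta_{p,q}S=S\Delta_{p,q}$ yields a discrete Leibniz formula $\Delta^M_{p,q}(FG)=\sum_{i+j=M}\binom{M}{i}\bigl(\Delta^i_{p,q}S^jF\bigr)\bigl(\Delta^j_{p,q}G\bigr)$, from which the bound is read off.

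Finally, (f) is obtained by iterating (e): since $\Delta_{s,q}$ turns a function of the form $\tilde H(p+s,q)$ into $(\Delta_{x,q}\tilde H)(p+s,q)$, again of that shape, induction on $K$ gives $\Delta^K_{s,q}[H(p+s,q)]=(\Delta^K_{x,q}H)(p+s,q)$, and then $\Delta^N_{p,q}$, which by (e) acts identically on such functions, yields $(\Delta^{N+K}_{x,q}H)(p+s,q)=\Delta^{N+K}_{p,q}[H(p+s,q)]$. The whole lemma is routine bookkeeping; the only places asking for care are pinning down the exact exponent $N+K-1$ in (d) rather than a cruder bound, and, in (f), keeping the roles of $p$, $s$, $q$ straight since both $\Delta_{p,q}$ and $\Delta_{s,q}$ decrement $q$. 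The structural reason everything works is that each operator in sight is $\Id$ minus a multiplicative shift, and all these shifts commute.
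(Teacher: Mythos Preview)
Your argument is correct in all six parts; the shift-operator framing is clean, and the only nontrivial points---the exact exponent $N+K-1$ in (d) via restriction to lines $p+q=\text{const}$, and the careful tracking in (f)---are handled properly. The paper itself omits the proof entirely, stating only that it is straightforward, so there is no approach to compare against; your write-up simply supplies what the paper leaves to the reader.
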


The proof of this lemma is straightforward and we omit it.

\begin{lemma}
\label{dec}
Suppose
\begin{equation}
\label{start}
\Delta^N_{p,q} \sum_{l=0}^n p^l H_l (p+s, q) = 0.
\end{equation}
Then for every $l = 0, \ldots, n$ there exists $K \in \N$ such that 
$$\Delta^K_{p,q} p^l H_l (p+s, q) = 0.$$
\end{lemma}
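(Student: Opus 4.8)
The plan is to reduce the problem to the one-variable statement Lemma~\ref{properties}(a) by a suitable "substitution" or, equivalently, by exploiting the fact that a polynomial identity in a group of variables forces vanishing of each homogeneous piece separately. First I would observe that the hypothesis \eqref{start} concerns a function of the two formal variables $p$ and $q$ through the combination $p+s$; the role of $s$ is that of a free shift parameter, and by Lemma~\ref{properties}(e),(f) the operators $\Delta_{p,q}$ and $\Delta_{s,q}$ act interchangeably on any expression of the form $H(p+s,q)$. The strategy is therefore to enlarge the equation: apply additional discrete derivatives in the $s$-variable to \eqref{start} so as to peel off the terms $p^l H_l(p+s,q)$ one at a time by their degree in $p$.

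Concretely, I would argue by downward induction on $l$, isolating the top term $p^n H_n(p+s,q)$ first. Rewrite $p^l H_l(p+s,q)$ by expanding $H_l(p+s,q)$ — or better, work with the variable $r = p+s$ and note $p = r - s$, so that $p^l H_l(p+s,q) = (r-s)^l H_l(r,q)$ is a polynomial of degree exactly $l$ in $s$ with leading coefficient $(-1)^l H_l(r,q)$. Summing over $l$, the left-hand side of \eqref{start} becomes a polynomial in $s$ (of degree $\le n$) whose coefficient of $s^n$ is $(-1)^n H_n(r,q)$, hence $(-1)^n H_n(p+s,q)$ after substituting back. Applying $\Delta_{s,q}^{\,n}$ to \eqref{start} and using Lemma~\ref{properties}(f) to combine it with the existing $\Delta_{p,q}^N$, together with the fact (Lemma~\ref{properties}(a) in the $s$-variable) that a discrete derivative of order $n$ kills all the lower-degree-in-$s$ terms and extracts $n!$ times the leading coefficient, yields $\Delta_{p,q}^{N+n}\bigl((-1)^n n!\, H_n(p+s,q)\bigr)=0$, i.e.\ $\Delta_{p,q}^{N+n} H_n(p+s,q)=0$. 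Multiplying by $p^n$ and using Lemma~\ref{properties}(d) (the degree of $p^n$ as a function of $p$, hence of $(p,q)$ along the relevant coset direction, is controlled) gives $\Delta^K_{p,q} p^n H_n(p+s,q)=0$ for $K = N+2n$ or so. Subtracting this term from \eqref{start} and applying the triangle-type inequality of Lemma~\ref{properties}(d) once more shows the remaining sum $\sum_{l=0}^{n-1} p^l H_l(p+s,q)$ again satisfies a hypothesis of the form \eqref{start} (with a larger $N$), and the induction proceeds.

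The step I expect to require the most care is the bookkeeping that lets one treat "$p^l$" as a function whose discrete derivative eventually vanishes: $\Delta_p^{l+1} p^l = 0$ by Lemma~\ref{properties}(a), but in the two-variable setting one needs $\Delta_{p,q}$ applied to $p^l$, and $\Delta_{p,q} p = p - (p+1) = -1$, so in fact $\Delta_{p,q}^{l+1} p^l = 0$ as well — this is the observation that makes Lemma~\ref{properties}(d) applicable to the product $p^l H_l(p+s,q)$. The other delicate point is justifying the "coefficient extraction in $s$" rigorously: since $s$ ranges over a coset of $\Z$, an identity that holds for all such $s$ and is polynomial in $s$ of bounded degree does force each coefficient to vanish, and the discrete-derivative version of this (order-$n$ difference extracts $n!$ times the leading term of a degree-$n$ polynomial) is exactly Lemma~\ref{properties}(a) read contrapositively. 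Once these two points are in hand the rest is the routine induction sketched above.
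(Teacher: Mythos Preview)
Your overall strategy---isolate the top-degree term by applying an auxiliary discrete derivative, then invoke Lemma~\ref{properties}(d) and induct downward---is exactly what the paper does. However, the operator you propose for the extraction step is wrong, and this is not just a bookkeeping issue.

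You apply $\Delta_{s,q}^n$ and claim it ``kills all the lower-degree-in-$s$ terms and extracts $n!$ times the leading coefficient.'' It does not. In the original coordinates $(p,s,q)$, the operator $\Delta_{s,q}$ leaves $p$ fixed, so
\[
\Delta_{s,q}\bigl(p^l H_l(p+s,q)\bigr) \;=\; p^l\,\Delta_{s,q} H_l(p+s,q),
\]
which does not touch the factor $p^l$ at all. Your change of variables $r=p+s$ is misleading here: the expression $(r-s)^l H_l(r,q)$ is polynomial in $s$ only if $r$ is held fixed, but $\Delta_{s,q}$ in $(p,s,q)$-coordinates shifts $r$ along with $s$, so you are not taking a pure $s$-difference. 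Consequently neither the ``coefficient extraction'' nor the appeal to Lemma~\ref{properties}(f) (which requires the argument to be of the form $H(p+s,q)$, and $p^l H_l(p+s,q)$ is not) goes through.

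The correct operator is $\Delta_{p,s}$, which is what the paper uses. Since $\Delta_{p,s}$ fixes $p+s$ and $q$, it annihilates each $H_l(p+s,q)$, while on a function of $p$ alone one has $\Delta_{p,s}F(p)=\Delta_p F(p)$. Hence
\[
\Delta_{p,s}^{\,n}\sum_{l=0}^n p^l H_l(p+s,q)
\;=\; \sum_{l=0}^n \bigl(\Delta_p^{\,n} p^l\bigr)\, H_l(p+s,q)
\;=\; c\, H_n(p+s,q)
\]
for a nonzero constant $c$. Commuting $\Delta_{p,s}^{\,n}$ past $\Delta_{p,q}^{\,N}$ (shifts commute; Lemma~\ref{properties}(f) is neither applicable nor needed) gives $\Delta_{p,q}^{\,N} H_n(p+s,q)=0$. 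From here your remaining steps---multiply by $p^n$, apply Lemma~\ref{properties}(d) to get $\Delta_{p,q}^{\,N+n}\bigl(p^n H_n(p+s,q)\bigr)=0$, subtract from \eqref{start}, and induct---are correct and coincide with the paper's proof.
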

\begin{proof}
Let us prove this Lemma by induction on $n$. If $n=0$, the statement holds trivially.
For the step of induction, apply operator $\Delta_{p,s}^n$ to (\ref{start}). 
Since $\Delta_{p,s} H_l (p+s, q) = 0$ and $\Delta_{p,s} F(p) = \Delta_p F(p)$, we get
$n! \Delta^N_{p,q}  H_n (p+s, q) = 0$. Then by Lemma \ref{properties} (d), we have
\begin{equation*}
\Delta^{N+n}_{p,q}  p^n H_n (p+s, q) = 0.
\end{equation*}
Subtracting this from $\Delta^n_{p,q}$ applied to (\ref{start}), we get
\begin{equation*}
\Delta^{N+n}_{p,q} \sum_{l=0}^{n-1} p^l H_l (p+s, q) = 0,
\end{equation*}
and the claim of the Lemma follows from the induction assumption.
\end{proof}

\section{Locality of distributions in superconformal algebras}
\label{fields}

We begin by applying the machinery described in the previous sections to superconformal algebras.

Let $\LL$ be a superconformal algebra. It is a cuspidal module with respect to the adjoint action of $\Vir$. Let $\hL$ be its $\AV$ cover. By Theorem \ref{coord} we can realize
$\hL$ as 
$$\hL = \mathop\oplus\limits_{\beta \in \C/\Z}
\mathop\oplus\limits_{k\in \beta} t^k \otimes U[\beta] ,$$
where $U[\beta]$ is a weight space in $\Vir$-submodule $\oplus_{\alpha\in\C/\Z} \LL[\alpha, \beta]$.

We are going to construct a basis in a finite-dimensional vector space 
\begin{equation}
\label{U}
U = \oplus_{\beta \in \C/\Z} U[\beta].
\end{equation}
By Lemma \ref{Lplus} (a), $U$ is a finite-dimensional representation of a solvable Lie algebra $W_+ / \left< \te_n \, | \, n > N \right>$. By Lie's theorem, there exists a flag
$$ 0 \subset \FF_1 \subset \FF_2 \subset \ldots \subset \FF_r = U$$ such that
$\te_0 \FF_j \subset \FF_j$ and $\te_n \FF_j \subset \FF_{j-1}$ for $n\geq 1$. Since $\dim \FF_j / \FF_{j-1} = 1$, there exist scalars $\alpha_j \in\C$ such that $\te_0$ act on $ \FF_j / \FF_{j-1}$ 
as multiplication by $\alpha_j$. We are going to choose a basis 
$\{u^1, \ldots, u^r \}$ in $U$, which is compatible with this flag and has an additional 
property that the matrix of $\te_0$ has a block decomposition with respect its generalized 
eigenspaces, i.e.,
$$\te_0 u^i = \alpha_i u^i + \sum\limits_{j > i} c^0_{ij} u^j$$
where $c^0_{ij} = 0$ when $\alpha_i \neq \alpha_j$.
The action of $\te_n$ with $n \geq 1$ will be written as
$$\te_n u^i =  \sum\limits_{j > i} c^n_{ij} u^j.$$
Of course, we can choose such a basis in a way that respects the direct sum decomposition
(\ref{U}), i.e., each $u^i$ belongs to one of the spaces $U[\beta]$, which implies that $c^n_{ij} = 0$ when $u^i$ and $u^j$ belong to different components. We will denote by
$\beta_i$ the coset corresponding to $u_i$.

We get a basis in $\hL$: $\{ u^i_k = t^k \otimes u^i \, | \, i=1,\ldots, r, \, k \in \beta_i \}$. Applying Theorem \ref{coord}, we get the action of $\Vir$ on $\hL$
in this basis:
\begin{equation}
\label{act}
L_n u^i_k = - (k + n \alpha_i) u^i_k 
- \sum_{j > i} \sum_{s > 0} \frac{n^s}{s!} c_{ij}^{s-1} u^j_{n+k},
\end{equation} 
which we can write as
\begin{equation}
\label{act2}
L_n u^i_k = - (k + n \alpha_i) u^i_k 
- \sum_{j > i} F_{ij} (n) u^j_{n+k},
\end{equation} 
where $F_{ij} (n)$ are polynomials in $n$ without constant terms, and also without terms linear in $n$ whenever $\alpha_i \neq \alpha_j$. 

We use the map $\pi: \hL \rightarrow \LL$ to project basis elements into
$\LL$. We set
\begin{equation}
\label{gens}
\tu^i_k = \left\{ 
\begin{matrix}
\pi (u^i_k), \text{\ if } \alpha_i \neq 1, \\
k \pi(u^i_k), \text{\ if } \alpha_i = 1, \\
\end{matrix}
\right.
\end{equation}
and we define distributions in $\LL$, $\tu^i (z) = \sum_{k\in\beta_i} \tu^i_k z^{-k}$.
Clearly, these distributions are homogeneous with respect to the grading on $\LL$ by the eigenvalues of $\ad L_0$.

As we can see from Example \ref{nonlocal}, care needs to be taken when dealing with the distributions corresponding to $\alpha = 1$.  The next proposition is the key technical result.

\begin{proposition}
\label{mutual}
Fields $\{\tu^1 (z), \ldots, \tu^r (z) \}$ are mutually local. 
\end{proposition}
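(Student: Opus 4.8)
The plan is to prove mutual locality by computing the bracket $[\tu^i_p, \tu^j_q]$ as a $\C$-linear combination of the basis elements $\tu^l_{p+q}$ of $\hL$ (pulled down to $\LL$ via $\pi$), with coefficients that are functions of $p$ and $q$, and then to apply Lemma \ref{locder}: it suffices to show that each such coefficient function is annihilated by a sufficiently high power of the two-variable discrete derivative $\Delta_{p,q}$. The bracket $[\tu^i(z),\tu^j(w)]$ is itself a distribution in $\LL$, and by the cuspidality of $\LL$ as a $\Vir$-module (more precisely, using that $\LL$ and hence the span of the brackets sits inside a cuspidal module with $\AV$ cover controlled by Theorem \ref{coord}) the image of $[\tu^i_p,\tu^j_q]$ lies in a fixed finite-dimensional weight space, so we may indeed write $[\tu^i_p,\tu^j_q]=\sum_l G^{ij}_l(p,q)\,\tu^l_{p+q}$ for scalar functions $G^{ij}_l$.

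First I would extract structural constraints on the functions $G^{ij}_l(p,q)$ from the $\Vir$-equivariance of the bracket. Applying $\ad L_n$ to $[\tu^i_p,\tu^j_q]$ and using the Jacobi identity together with the explicit action \eqref{act2} of $L_n$ on the basis $u^l_k$, one obtains a system of functional equations relating $G^{ij}_l(p+n,q)$, $G^{ij}_l(p,q+n)$ and $G^{ij}_l(p,q)$, with inhomogeneous terms coming from the off-diagonal polynomials $F_{ij}(n)$ and from the $(k+n\alpha_i)$ terms. The key point is that the ``diagonal'' part of the $L_0$-action forces the growth of $G^{ij}_l(p,q)$ in the variable $p+q$ to be polynomially bounded, while the equivariance in $L_n$ for general $n$ pins down the dependence on $p$ and $q$ separately. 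Concretely, I expect each $G^{ij}_l(p,q)$ to be a polynomial in $p$ and $q$ multiplied by possibly one ``dangerous'' factor that is genuinely non-polynomial — and this is exactly where the rescaling \eqref{gens} by $k$ in the $\alpha_i=1$ case enters. Lemma \ref{properties}(a) translates polynomiality of degree at most $M-1$ in the relevant combined variable into annihilation by $\Delta^M_{p,q}$, and parts (c), (d), (e), (f) of that lemma let one combine and manipulate such estimates; Lemma \ref{dec} is the tool for peeling off the contributions term by term when $G^{ij}_l$ is presented as $\sum_l p^l H_l(p+q,\ldots)$.

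The main obstacle — and the reason Example \ref{nonlocal} is displayed as a warning — is the behaviour at $\alpha_i=1$, i.e.\ the tensor-module summand $V(1,\Z)$ and its non-split extensions, where the naive distribution $\sum_k \pi(u^i_k)z^{-k}$ fails to be local against the others. The $G$ so badly behaved term in Example \ref{nonlocal} is precisely the $\delta_{m,0}n^3$ cocycle living on the $\alpha=1$ component, and pluperfectness is what rules such a cocycle out. So the heart of the argument is: (1) show that for the rescaled generators $\tu^i_k$ the potentially non-polynomial factor is removed or tamed — multiplying by $k$ replaces $v_k$ (with $k$ ranging over $\Z$, including the troublesome $k=0$ where $V(1,\Z)$ has its submodule anomaly) by $kv_k$, which lands in the well-behaved submodule $\overline V$; and (2) invoke pluperfectness, $\LL_0=\bigoplus_{j\neq 0}[\LL_j,\LL_{-j}]$, to guarantee that no ``coadjoint-type'' obstruction survives in the $\alpha=1$ block — any such obstruction would force an element of $\LL_0$ outside $\sum_{j\neq0}[\LL_j,\LL_{-j}]$, contradicting the hypothesis. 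I would organize the proof so that the generic case ($\alpha_i\neq 1$ or $\alpha_j\neq 1$) is handled uniformly by the discrete-derivative estimates of Section 3 applied to the equivariance equations, and then treat the $\alpha_i=\alpha_j=1$ case separately, using the rescaling \eqref{gens} and pluperfectness to close the gap.
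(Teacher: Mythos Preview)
There are two genuine gaps in your plan.

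First, the expansion $[\tu^i_p,\tu^j_q]=\sum_l G^{ij}_l(p,q)\,\tu^l_{p+q}$ on which your whole argument rests is not available. The elements $\tu^l_k$ are images under $\pi:\hL\to\LL$, and $\pi$ is neither injective nor surjective; its image is only $[\Vir,\LL]$ (Theorem~\ref{cusp}(c)). For $\alpha_l=1$ you have $\tu^l_0=0$ by definition~(\ref{gens}), so at $p+q=0$ the family $\{\tu^l_0\}$ certainly fails to span $\LL_0$, and even for $p+q\neq 0$ the $\pi(u^l_{p+q})$ may be linearly dependent, so the coefficients $G^{ij}_l$ are not well-defined functions to which you could apply Lemma~\ref{properties}. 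The paper avoids this entirely by never expanding the bracket in a basis of $\LL$: it works at the operator level, proving identities of the form $\Delta^M_{p,q}\,\ad\tu^i_p\,\ad L_q=0$ as endomorphisms of $\LL$, and only at the very last step evaluates on a specific vector $\pi(u^j_s)$.

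Second, you have misplaced the role of pluperfectness. Proposition~\ref{mutual} is stated and proved for an \emph{arbitrary} superconformal algebra $\LL$; no pluperfectness is assumed or used. Pluperfectness enters only afterwards, in the proof of Theorem~\ref{main}, to guarantee that the coefficients of the (already mutually local) distributions span $\LL_0$. So your step (2), invoking pluperfectness to tame the $\alpha=1$ block, cannot be part of this proof. What actually replaces it is the key input you did not mention: Theorem~\ref{cusp}(a) gives the operator identity $\Delta^N_{p,q}\,\ad L_p\,\ad L_q=0$ on any cuspidal module. The paper bootstraps from this by a two-stage descending induction: first (Lemma~\ref{inter}) replace one $L$ by $\tu^i$ to get $\Delta^M_{p,q}\,\ad\tu^i_p\,\ad L_q=0$, then replace the second $L$ by $\tu^j$. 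At each stage one applies the operator to $\pi(u^j_s)$, expands via (\ref{act2}), and uses Lemma~\ref{dec} together with the vanishing of constant (resp.\ linear) terms in $F_{ij}$ to isolate the leading contribution; the $\alpha_i=1$ versus $\alpha_i\neq 1$ dichotomy arises naturally from whether one extracts the $l=0$ or $l=1$ term in Lemma~\ref{dec}, and the rescaling (\ref{gens}) is exactly what makes the $\alpha_i=1$ case go through.
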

\begin{proof}
By Lemma \ref{locder}, we need to show that 
\begin{equation}
\label{newloc}
\Delta^S_{p,q} [\tu^i_p, \tu^j_q] = 0
\end{equation}
for some $S\in\N$. Our idea is to derive this from the relation 
$\Delta^N_{p,q} \ad L_p \ad L_q = 0$, given by Theorem \ref{cusp} (a).

As an intermediate step, we are going to prove the following identities:

\begin{lemma}
\label{inter}
There exists $M \in \N$ such that for all $i = 1, \ldots, r$,
$$\Delta^M_{p,q} \ad \tu^i_p \ad L_q = 0 .$$
\end{lemma}
\begin{proof}
We shall prove this claim by a descending induction in $i$. Let us assume that the claim holds for all $j = i+1, \ldots, r$. Consider the equality
$$\left[ \Delta^N_{p,q} \ad L_p \ad L_q, \ad \pi (u^i_s) \right] = 0.$$
The left hand side can be rewritten as
\begin{align*}
\Delta^N_{p,q} \ad \pi\left( L_p u^i_s \right) \ad L_q 
+ \Delta^N_{p,q}  \ad L_p  \ad \pi \left( L_q u^i_s \right) . 
\end{align*}
We expand the second summand using (\ref{act2}):
\begin{align*}
\Delta^N_{p,q}& \ad \pi\left( L_p u^i_s \right) \ad L_q 
+ \Delta^N_{p,q}  \ad L_p  \ad \pi \left( L_q u^i_s \right) =\\
\Delta^N_{p,q}& \ad \pi\left( L_p u^i_s \right) \ad L_q \\
&- \Delta^N_{p,q} (s + \alpha_i q) \ad L_p  \ad \pi  u^i_{q+s} 
- \sum_{j>i}  F_{ij} (q) \ad L_p  \ad \pi  u^j_{q+s} .
\end{align*}
Next we want to apply a power of $\Delta_{s,q}$ to this expression. We note that
$\Delta_{s,q} u^j_{q+s} = 0$ and $\Delta_{s,q}  F_{ij} (q) = \Delta^{-}_q F_{ij} (q)$. 
Choose $K \geq 2$ such that it exceeds degrees of all polynomials $F_{ij}$. Then applying
$\Delta^K_{s,q}$ to the previous expression we get
\begin{equation*}
\Delta^K_{s,q} \Delta^N_{p,q} \ad \pi\left( L_p u^i_s \right) \ad L_q  = 0.
\end{equation*} 
Next, expanding the left hand side using (\ref{act2}), we get
\begin{align*}
\Delta^K_{s,q} \Delta^N_{p,q} (s + \alpha_i p) &\ad \pi(u^i_{p+s}) \ad L_q \\
&+  \sum_{j>i}  \Delta^K_{s,q} \Delta^N_{p,q}  F_{ij} (p) \ad \pi(u^j_{p+s}) \ad L_q = 0 .
\end{align*}
By induction assumption for $j >i$ with $\alpha_j \neq 1$ we have
$$\Delta^M_{p,q} \ad \pi(u^j_{p}) \ad L_q = 0$$
and hence
$$\Delta^M_{p,q} \ad \pi(u^j_{p+s}) \ad L_q = 0.$$
If $L$ is the maximum of degrees of all polynomials $F_{ij}$, by Lemma \ref{properties} (d), we get for $j > i$ with $\alpha_j \neq 1$
$$\Delta^{M+L}_{p,q}  F_{ij} (p) \ad \pi(u^j_{p+s}) \ad L_q = 0.$$
We may assume that $N \geq M + L$. Then we get
\begin{align}
\label{fork}
\Delta^K_{s,q} \Delta^N_{p,q} &(s + \alpha_i p) \ad \pi(u^i_{p+s}) \ad L_q\\
&+  {\sum_{j>i}}^\prime  
\Delta^K_{s,q} \Delta^N_{p,q}  F_{ij} (p) \ad \pi(u^j_{p+s}) \ad L_q =0, \nonumber
\end{align}
where $ {\sum\limits_{j>i}}^\prime$ is a summation only over indices $j$ with $\alpha_j = 1$.

We consider now two cases, $\alpha_i = 1$ and $\alpha_i \neq 1$.

Suppose $\alpha_i = 1$. Then we have
\begin{align*}
\Delta^K_{s,q} \Delta^N_{p,q} \ad \tu^i_{p+s} \ad L_q
+  {\sum_{j>i}}^\prime  
\Delta^K_{s,q} \Delta^N_{p,q}  F_{ij} (p) \ad \pi(u^j_{p+s}) \ad L_q =0.
\end{align*}
Now we apply to this expression Lemma \ref{dec} with $l=0$. Since polynomials $F_{ij}(p)$ have no constant terms, we get
\begin{equation*}
\Delta^K_{s,q} \Delta^R_{p,q} \ad \tu^i_{p+s} \ad L_q = 0.
\end{equation*}
By Lemma \ref{properties} (f), this is equivalent to
\begin{equation*}
\Delta^{K+R}_{p,q} \ad \tu^i_{p+s} \ad L_q = 0.
\end{equation*}
Setting $s=0$ we obtain the claim of the Lemma when $\alpha_i =1$. 

Now we treat (\ref{fork}) with $\alpha_i \neq 1$. We rewrite $(s + \alpha_i p)$ as
$(s + p) + (\alpha_i - 1) p$ and apply to (\ref{fork}) Lemma \ref{dec} with $l = 1$.
Recall that in summation over $j$ we only have terms with $\alpha_j = 1$. Since
$\alpha_i \neq \alpha_j$, polynomials $F_{ij} (p)$ do not contain linear terms. As a result
we will get
$$ (\alpha_i - 1) \Delta^K_{s,q} \Delta^R_{p,q} p \ad \pi(u^i_{p+s}) \ad L_q = 0.$$
Applying now $\Delta_{p,s}$ we get
$$\Delta^K_{s,q} \Delta^R_{p,q} \ad \tu^i_{p+s} \ad L_q = 0.$$
Applying Lemma \ref{properties} (f) and setting $s = 0$, we obtain the claim of the Lemma.
\end{proof}

Now let us return to the proof of Proposition \ref{mutual}.
This proof will be very similar to the proof of the previous lemma.
We will prove (\ref{newloc}) by a decreasing induction in $j$.
By Lemma \ref{inter} we have
\begin{equation*}
\Delta^M_{p,q} [\tu^i_p,  [L_q, \pi(u^j_s) ]] = 0,
\end{equation*}
which becomes
\begin{equation}
\label{exp3}
\Delta^M_{p,q} (s + \alpha_j q) [\tu^i_p,  \pi(u^j_{q+s}) ]
+ \sum_{k > j} \Delta^M_{p,q} F_{jk}(q)  [\tu^i_p,  \pi(u^k_{q+s}) ] = 0.
\end{equation}
Using the same argument as in the proof of Lemma \ref{inter} we see that for $k$ with
$\alpha_k \neq 1$ and sufficiently large $M$ we get
$$\Delta^M_{p,q} F_{jk}(q)  [\tu^i_p,  \pi(u^k_{q+s}) ] = 0.$$
Eliminating these terms in (\ref{exp3}), we get
\begin{equation*}
\label{exp2}
\Delta^M_{p,q} (s + \alpha_j q) [\tu^i_p,  \pi(u^j_{q+s}) ]
+ {\sum_{k > j}}^\prime \Delta^M_{p,q} F_{jk}(q)  [\tu^i_p,  \pi(u^k_{q+s}) ] = 0,
\end{equation*}
where the summation in $k$ only contains terms with $\alpha_k = 1$.

Consider now two cases, $\alpha_j = 1$ and $\alpha_j \neq 1$. If $\alpha_j = 1$,
we apply Lemma \ref{dec} with $l = 0$ and obtain
$$\Delta^S_{p,q} [\tu^i_p,  \tu^j_{q+s}] = 0,$$
which yields the required claim after we set $s=0$.

If $\alpha_j \neq 1$, we use  Lemma \ref{dec} with $l = 1$ and obtain
$$(\alpha_j - 1) \Delta^S_{p,q} q [\tu^i_p,  \tu^j_{q+s}] = 0,$$
and we get the desired claim after applying operator $\Delta_{q,s}$ and then setting $s = 0$.

 This completes the proof ot the Proposition.
\end{proof}

Now we are ready to prove Theorem \ref{main}.

\begin{proof}
Let $\LL$ be a pluperfect superconformal algebra. We need to show that it is spanned by the 
coefficients of a family of mutually local distributions. Consider the family of distributions
$S = \{\tu^1 (z), \ldots, \tu^r (z)$ of Proposition \ref{mutual}. The coefficients of these
distributions were constructed from a basis of $\AV$-cover $\hL$. The projection 
$\pi: \, \hL \rightarrow \LL$ has $[\Vir, \LL]$ as its image. Since for every non-zero $j \in \C$, $L_0$ acts on $\LL_j$ as a non-zero scalar, we get that $\LL_j$ with $j\neq 0$
belongs to the image of $\pi$ and hence is spanned by the coefficients at $z^{-j-1}$ of 
the distributions in $S$. This may fail for the component $\LL_0$.

If we add to $S$ all pairwise $n$-th products $\tu^i (z)_{(n)} \tu^k (z)$ with $n \geq 0$, we will get a finite family 
${\overline S}$ of homogeneous distributions whose coefficients will span 
$\LL_j$ with $j\neq 0$, as well as $[\LL_j, \LL_{-j}]$ with $j\neq 0$ (see (\ref{kprod}) below).
Since $\LL$ is pluperfect, the coefficients of distributions in ${\overline S}$ will span all of $\LL$.
By Dong's Lemma (\cite{K}, Lemma 3.2), the distributions in $\barS$ are mutually local.
\end{proof}

\section{Regular Lie superalgebras of formal distributions and twisted forms of superconformal algebras}
\label{dloc}

We begin with a definition of a regular Lie superalgebra of formal distributions in a twisted setting.
Here we are following the ideas of \cite{K} and \cite{K2}.

\begin{definition}
\label{defloc}
(cf. \cite{K})
Let $\LL$ be a Lie superalgebra which is graded by the group $\C/\Z$
$$ \LL = \mathop\oplus\limits_{\alpha \in \C/\Z} \LL_{\alpha},$$
and let $T$ be an even derivation of $\LL$ preserving this grading. $\LL$ is called 
a regular Lie superalgebras of formal distributions if there exists a spanning family 
$\FF = \{ x^j (z) = \sum\limits_{n \in \alpha_j} x^j_n z^{-n-1} \, | \, x^j_n \in \LL_{\alpha_j} \}$
of $\LL$-valued mutually local formal distributions such that
every distribution in $\FF$ is $T$-covariant:
%
%
$$T(x^j(z)) = \sum\limits_{n\in\alpha_j} T(x^j_n) z^{-n-1} = \dd{z} x^j(z) .$$
\end{definition}

Lie bracket of two mutually local distributions $x(z)$, $y(w)$, supported on cosets 
$\alpha$ and $\beta$ respectively, can be expanded as follows (for details see \cite{K2}, Section 7):
\begin{equation}
\label{kprod}
\left[ x(z), y(w) \right] 
= \sum_{k=0}^N \frac{1}{k!} c^{(k)} (w) \left( \dd{w} \right)^k \delta_\alpha (w-z), 
\end{equation}
where distributions $c^{(k)} (w)$ are supported on $\alpha+\beta$ and 
$\delta_\alpha (w-z)$ is defined as
$$\delta_\alpha (w-z) = z^{-1} \sum_{n \in \alpha} \left( \frac{w}{z} \right)^n .$$
For $k \geq 0$ the $k$-th product $x(z)_{(k)} y(z)$ of distributions  $x(z)$ and $y(z)$ is defined as the distribution
$c^{(k)}(z)$ appearing in (\ref{kprod}).

Let $(\LL, T, \FF)$ be as in Definition \ref{defloc} and
let vector space $\bF$ be the linear closure of $\FF$ with respect to operations $\dd{z}$ and
$k$-th products ($k \geq 0$). Denote by $\bF_\alpha$ the subspace of distributions supported on a coset $\alpha \in \C/\Z$. We have a grading on $\bF$:
$$\bF =  \mathop\oplus\limits_{\alpha\in \C/\Z} \bF_\alpha.$$

By Dong's Lemma (\cite{K},  Lemma 3.2) distributions in $\bF$ are mutually local. It is also easy to check that they are $T$-covariant.

The space $\bF$ has a structure of a (Lie) conformal algebra. We refer to \cite{K} for the definition and details. We will denote this conformal algebra by $\CC$ and denote by $x^j$ the generators of $\CC$, corresponding to distributions $x^j (z)$. The operator on $\CC$ corresponding to differentiation $\dd{z}$ on $\bF$ will be denoted by $\del$.

 A grading on $\LL$ by $\C/\Z$ induces an automorphism $\theta$ of $\CC$, where 
$\theta (x^j) = e^{2\pi i \alpha_j} x^j$ and $\theta(\del x) = \del \theta(x)$ for all 
$x \in \CC$.

This gives us a functor $\Con$ from the category of regular Lie superalgebras of formal distributions to the category of conformal algebras decorated with a diagonalizable automorphism,
$(\CC, \theta) = \Con(\LL)$. Sometimes with a slight abuse of notations we will write $\CC = \Con(\LL)$.

Following \cite{K}, let us also construct functor $\Alg$ that goes in the opposite direction.

In addition to $k$-th products and differentiation, there is another operation on formal distributions that preserves locality -- it is multiplication by elements of $\C[z,z^{-1}]$.
This leads to the following notion:
\begin{definition}
An {\it affine} conformal algebra is a conformal algebra $\CC$ with an additional structure of a $\C[t,t^{-1}]$-module, satisfying the axioms:
$$\del(f x) = \frac{df}{dt} x + f \del(x) \leqno{(A1)}$$
and
$$(f x)_{(k)}(g y) = \sum\limits_{j=0}^\infty \frac{1}{j!} \frac{d^j f}{dt^j} g (x_{(k+j)} y),
\leqno{(A2)}$$
where $f, g \in \C[t,t^{-1}]$, $x, y \in \CC$.
\end{definition}

Given a conformal algebra $\CC$ and its diagonalizable automorphism $\theta$
(equivalently: grading by $\C/\Z \cong \C^*$), 
we can construct an affine conformal algebra $\tC$ via the process of {\it twisted 
affinization}:
$$\tC = \mathop\oplus\limits_{\alpha\in\C/\Z} t^{\alpha} \C[t,t^{-1}] \otimes \CC_\alpha,$$
where $\del$ and $k$-th products are defined via (A1) and (A2).

The space $\del \tC$ is an ideal in $\tC$ with respect to $0$-th product, and the quotient
$\tL = \tC / \del \tC$ is a Lie superalgebra $\Alg(\CC, \theta)$ associated to such a pair
with the Lie bracket given by $0$-th product (see \cite{K} for details). 
Its spanning family of mutually local distributions is 
$$\FF = \mathop\cup\limits_{\alpha\in\C/\Z} \bigg\{ x(z) = \sum_{n\in\alpha} (t^n x) z^{-n-1} \,\, \big| \,\,
x \in\CC_\alpha \bigg\}.$$
These distributions are covariant with respect to derivation $T =  - \frac{d}{dt}$. 


A conformal algebra is called {\it finite} if it is finitely generated as a $\C[\del]$-module.

\begin{theorem}
\label{finloc}
(\cite{K}, Corollary 4.7, Remark 2.7b)
Let $\LL$ be a regular Lie superalgebra of formal distributions with a finite spanning family $\FF$.
Then conformal superalgebra $\CC = \Con (\LL)$ is finite. Conversely, if $\CC$ is a finite conformal superalgebra and $\theta$ is its diagonalizable automorphism then $\Alg(\CC, \theta)$ is a regular Lie superalgebra of formal distributions with a finite spanning family,
\end{theorem}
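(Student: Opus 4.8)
The plan is to prove this as the two directions of a single correspondence, following \cite{K}. Both directions rest on two elementary facts about a conformal superalgebra $\CC$: for any $a,b\in\CC$ one has $a_{(n)}b=0$ for $n$ large (locality), and $\del$ interacts with the $n$-th products through the sesquilinearity rules $(\del a)_{(n)}b=-n\,a_{(n-1)}b$ and $a_{(n)}(\del b)=\del(a_{(n)}b)+n\,a_{(n-1)}b$. The interaction of $\del$ and the $n$-th products with the $\C[t,t^{-1}]$-action in a twisted affinization is governed by (A1) and (A2). With these in hand, each direction is a matter of unwinding the definitions of $\Con$ and $\Alg$.

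I would do the converse direction first, as it is the more routine one. Let $\CC$ be finite with diagonalizable automorphism $\theta$, and set $\tL=\Alg(\CC,\theta)=\tC/\del\tC$. Choose a finite set $c^1,\dots,c^m$ of $\theta$-homogeneous generators of $\CC$ as a $\C[\del]$-module, say $c^i\in\CC_{\alpha_i}$, and let $c^i(z)=\sum_{n\in\alpha_i}(t^n c^i)z^{-n-1}$ be the associated distributions in $\tL$. One then checks: (i) the $c^i(z)$ span $\tL$, since by (A1) one has $t^n\del c\equiv -n\,t^{n-1}c\pmod{\del\tC}$, so the image of $\tC$ in $\tL$ is already spanned by the $t^n c^i$; (ii) they are $T$-covariant for $T=-d/dt$, because $T(t^n c^i)=-n\,t^{n-1}c^i$ is precisely the coefficient of $z^{-n-1}$ in $\del_z c^i(z)$; (iii) they are pairwise mutually local, since $(t^n c^i)_{(0)}(t^s c^j)=\sum_{p\ge 0}\binom{n}{p}t^{n+s-p}(c^i_{(p)}c^j)$ by (A2) is a finite sum by locality, which yields $[c^i(z),c^j(w)]=\sum_{p=0}^{N_{ij}-1}\tfrac1{p!}(\del_w^p\delta_{\alpha_i}(w-z))(c^i_{(p)}c^j)(w)$ and hence $(z-w)^{N_{ij}}[c^i(z),c^j(w)]=0$; and (iv) the $\C/\Z$-grading of $\tC$ coming from $\theta$ descends to $\tL$ and is preserved by $T$, because $\del\tC$ is a graded subspace. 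This exhibits $\tL$ as a regular Lie superalgebra of formal distributions with finite spanning family $\{c^1(z),\dots,c^m(z)\}$.

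For the forward direction, let $\LL$ be regular with finite spanning family $\FF=\{x^1(z),\dots,x^r(z)\}$ and $\CC=\Con(\LL)=\bF$. What must be shown is that $\bF$ is finitely generated as a $\C[\del_z]$-module. Since $\FF$ is finite and mutually local there is a single $N$ with $(z-w)^N[x^i(z),x^j(w)]=0$ for all $i,j$, so every $n$-th product $x^i_{(n)}x^j$ with $n\ge N$ vanishes; consequently $\bF$ is generated as a conformal superalgebra by $\FF$, and already the $\C[\del_z]$-submodule $M$ spanned by $\FF$ together with the finitely many $x^i_{(n)}x^j$ ($0\le n<N$) is a finite $\C[\del_z]$-module. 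The real content is to show $M=\bF$, i.e.\ that $M$ is closed under all $n$-th products: one rewrites an iterated product such as $(x^i_{(k)}x^j)_{(l)}x^p$ using the conformal Jacobi identity
\[ a_{(m)}(b_{(n)}c)=\sum_{j\ge 0}\binom{m}{j}(a_{(j)}b)_{(m+n-j)}c+(-1)^{p(a)p(b)}\,b_{(n)}(a_{(m)}c) \]
together with the rule $a_{(k)}(\del b)=\del(a_{(k)}b)+k\,a_{(k-1)}b$, bounding the indices that occur by means of Dong's Lemma and the vanishing degree $N$, and concludes that no further $\C[\del_z]$-module generators are produced. This index bookkeeping is carried out in \cite[Corollary 4.7]{K} (see also \cite[Remark 2.7b]{K}), and I expect it to be the main obstacle; note that it is exactly here that one uses that $\LL$ is spanned by the coefficients of \emph{finitely many} distributions, the statement being false for infinite spanning families.

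Finally, I would observe that $\Con$ and $\Alg$ are quasi-inverse to one another, so the two halves of the theorem are the two directions of a single equivalence between finite conformal superalgebras equipped with a diagonalizable automorphism and regular Lie superalgebras of formal distributions with a finite spanning family; it is this equivalence, applied to the output of Theorem \ref{main}, that produces a conformal superalgebra attached to a pluperfect superconformal algebra.
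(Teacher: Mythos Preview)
The paper does not prove Theorem~\ref{finloc}; it is quoted from \cite{K} without argument, so there is no in-paper proof to compare against. Your converse direction is the standard verification and is correct.

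Your forward direction has a real gap. The conformal Jacobi identity you invoke rewrites $(a_{(k)}b)_{(l)}c$ as a combination of terms $a_{(\bullet)}(b_{(\bullet)}c)$ and $b_{(\bullet)}(a_{(\bullet)}c)$ of the \emph{same} nesting depth, so the step ``$M$ is closed under $n$-th products'' is circular: you are reducing depth-two products to other depth-two products, not to elements of $M$. More tellingly, the only use you make of the hypothesis that the coefficients of $\FF$ \emph{span} $\LL$ is to extract a uniform locality bound $N$; but a uniform bound on mutual locality among finitely many generators does not by itself force the conformal algebra they generate to be finite over $\C[\del]$ (think of the current conformal algebra $\C[\del]\otimes\mathfrak g$ for an infinite-dimensional $\mathfrak g$: it is generated by any two elements whose brackets fill out $\mathfrak g$, with locality of order~$1$, yet is infinite over $\C[\del]$). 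A correct argument must exploit the spanning hypothesis together with $T$-covariance---in the spirit of Lemma~\ref{zshifts} here---to show that every $T$-covariant field whose coefficients lie in $\LL$ already belongs to $\sum_i\C[\del]\,x^i(z)$; this is not captured by Jacobi bookkeeping.
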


Let us mention a relation between functors $\Con$ and $\Alg$.
Using the same proof as in \cite{K}, where untwisted case is treated, one obtains
\begin{theorem}
Let $\CC$ be a conformal algebra with a diagonalizable automorphism $\theta$. Then
$$\Con (\Alg(\CC, \theta)) \cong (\CC, \theta).$$
\end{theorem}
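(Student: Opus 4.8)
The plan is to follow the untwisted argument of \cite{K} and track how the $\C/\Z$-grading (equivalently, the automorphism $\theta$) behaves under the round trip $\Con \circ \Alg$. First I would unwind the definitions: starting from $(\CC, \theta)$, form the twisted affinization $\tC = \oplus_\alpha t^\alpha \C[t,t^{-1}] \otimes \CC_\alpha$, pass to the Lie superalgebra $\LL = \Alg(\CC,\theta) = \tC / \del\tC$, and recall from Section \ref{dloc} that $\LL$ carries the derivation $T = -d/dt$ and the spanning family $\FF = \cup_\alpha \{ x(z) = \sum_{n\in\alpha} (t^n x) z^{-n-1} \mid x \in \CC_\alpha \}$, together with its $\C/\Z$-grading $\LL_\alpha = $ image of $t^\alpha\C[t,t^{-1}]\otimes\CC_\alpha$. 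Then $\Con(\LL) = (\bF, \theta')$, where $\bF$ is the linear closure of $\FF$ under $\del_z$ and $k$-th products, and $\theta'$ is the induced automorphism $\theta'(x^j) = e^{2\pi i \alpha_j} x^j$.

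The core step is to produce the conformal-algebra isomorphism $\varphi: \CC \to \bF$ and check it intertwines $\theta$ with $\theta'$. I would define $\varphi$ on generators by sending $x \in \CC_\alpha$ to the distribution $x(z) = \sum_{n\in\alpha}(t^n x) z^{-n-1} \in \bF_\alpha$, and extend by $\varphi(\del x) = \del_z \varphi(x)$ and $\varphi(x_{(k)} y) = \varphi(x)_{(k)} \varphi(y)$. The content is (i) well-definedness: the only relations among the generators $x^j$ of $\CC$ are consequences of the conformal-algebra axioms and the $\C[\del]$-module relations, and axioms (A1)–(A2) defining $\tC$ are exactly engineered so that the distributions $x(z)$ satisfy the matching relations in $\bF$ — this is where one invokes ``the same proof as in \cite{K}''; (ii) surjectivity: $\bF$ is by construction the linear span of the $x^j(z)$ closed under $\del_z$ and $k$-th products, and these are precisely the images under $\varphi$ of $\CC$'s generators closed under $\del$ and $k$-th products, so $\varphi$ is onto; (iii) injectivity: a distribution $\varphi(c) = \sum_{n}(t^n c) z^{-n-1}$ vanishes iff $t^n c \in \del\tC$ for all $n$, and one checks using the grading that this forces $c = 0$ in $\CC$ — equivalently, the construction of $\tC$ is faithful in that $\CC_\alpha \hookrightarrow \tC / \del\tC$ via $x \mapsto t^\alpha x$. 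Finally, since $\varphi(x^j) = x^j(z)$ lies in $\bF_{\alpha_j}$ and $\theta$ acts on $\CC_{\alpha_j}$ by $e^{2\pi i \alpha_j}$ while $\theta'$ acts on $\bF_{\alpha_j}$ by the same scalar, and both automorphisms commute with $\del$ and $k$-th products, we get $\varphi \circ \theta = \theta' \circ \varphi$, i.e. $\Con(\Alg(\CC,\theta)) \cong (\CC,\theta)$.

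The main obstacle I expect is the bookkeeping in injectivity: one must rule out hidden relations in $\bF$ that do not come from $\CC$, i.e. show the passage $\CC \rightsquigarrow \tC \rightsquigarrow \tC/\del\tC \rightsquigarrow \bF$ loses nothing. In the untwisted case \cite{K} this is handled by observing that $\tL = \tC/\del\tC$ contains $\CC$ (more precisely $\C \otimes \CC \cong \CC$) as the degree-zero piece in an auxiliary grading and that the $k$-th products on $\bF$ recover those on $\CC$ via the coefficient extraction $x(z)_{(k)} y(z)$; the twisted version is identical once one replaces the $\Z$-grading on $t$-powers by the $\alpha+\Z$-grading, since $\theta$ and $T$ respect it and no new identifications are introduced. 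I would therefore present the proof as: ``The argument of \cite{K} applies verbatim after replacing $\C[t,t^{-1}]$ by $t^\alpha\C[t,t^{-1}]$ on the $\CC_\alpha$-component; we indicate only the point at which $\theta$ enters,'' and then spell out step (iii) and the intertwining identity $\varphi\theta = \theta'\varphi$ in a few lines.
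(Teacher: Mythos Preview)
Your proposal is correct and matches the paper's approach exactly: the paper gives no independent proof of this theorem but simply states ``Using the same proof as in \cite{K}, where untwisted case is treated, one obtains'' the result. Your write-up is precisely an elaboration of that sentence---carrying the untwisted argument of \cite{K} through with the $\C/\Z$-grading and checking that the induced automorphism $\theta'$ agrees with $\theta$---so there is nothing to add.
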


A counterpart of this statement is the following
\begin{theorem}
\label{AlgCon}
Let $\LL$ be a regular Lie superalgebra of formal distributions with a spanning family $\FF$. Then $\tL = \Alg(\Con(\LL))$ is a central extension of $\LL$. 

The homomorphism
$\sigma: \ \tL \rightarrow \LL$ given by $\sigma(t^n x) = x_n$ for $x(z) \in \bF_\alpha$, $n \in \alpha$.
\end{theorem}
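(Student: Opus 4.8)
The plan is to unwind the two functors $\Con$ and $\Alg$ and track what happens to the spanning distributions. Starting from $\LL$ with its derivation $T$ and spanning family $\FF = \{x^j(z)\}$, the functor $\Con$ produces the conformal algebra $\CC = \bF$, realized concretely as the $\del$- and $k$-th-product closure of $\FF$ inside the space of $\LL$-valued distributions. Applying $\Alg$ to $(\CC,\theta)$ performs twisted affinization, forming $\tC = \oplus_\alpha t^\alpha\C[t,t^{-1}]\otimes\CC_\alpha$, and then quotients by $\del\tC$ to get $\tL$. I would first write down the natural candidate map: send the class of $t^n x$ in $\tL$, for $x(z)\in\bF_\alpha$ and $n\in\alpha$, to the coefficient $x_n\in\LL$. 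This is exactly the map $\sigma$ in the statement, and the bulk of the proof is checking it is well-defined and a Lie homomorphism with central kernel.

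The key steps, in order, are: (i) verify $\sigma$ is well-defined on $\tC$, i.e.\ that it respects the affine $\C[t,t^{-1}]$-module structure and is compatible with axioms (A1) and (A2); this reduces to the elementary identity that the coefficient extraction map $x(z)\mapsto x_n$ intertwines multiplication by $t^m$ with the index shift $n\mapsto n+m$, intertwines $\del$ with $T$ (by $T$-covariance of the distributions in $\FF$, which gives $T(x_n)=(\del x)_{n+1}$ up to the standard shift bookkeeping), and sends $k$-th products to the coefficient identity coming from expansion (\ref{kprod}). (ii) Show $\sigma$ kills $\del\tC$, so it descends to $\tL$: since $\del$ corresponds to $T$ and the $0$-th product bracket on $\tL$ descends, the image of $\del(t^n x)$ under coefficient extraction is a total-derivative term that vanishes in the appropriate sense; more precisely $\sigma(\del\tC)=0$ because $\del$ acts as $T$ and $T$ is an \emph{inner}-like operator here — rather, because the $0$-th product of $\del\tC$ with everything is zero and $\sigma$ is a bracket homomorphism whose image is the whole Lie algebra $\LL$. (iii) Check $\sigma$ is a homomorphism of Lie superalgebras: the bracket on $\tL$ is the $0$-th product, and by (\ref{kprod}) with $k=0$ the coefficient of the $0$-th product distribution at the appropriate power is precisely $[x_p,y_q]$ summed against the delta function, which collapses to $\sigma$ of the bracket. (iv) Surjectivity: since $\FF$ spans $\LL$, the coefficients $x^j_n$ span $\LL$, and these are all in the image of $\sigma$. (v) Centrality of the kernel: if $\sigma(v)=0$ then $[\sigma(v),\sigma(w)]=0$ for all $w$; because $\sigma$ is a surjective bracket homomorphism, $\sigma([v,w])=0$, but one then argues that the bracket $[v,w]$ in $\tL$ is actually zero whenever $\sigma(v)=0$, using that the conformal-algebra structure on $\CC=\bF$ was built to reproduce exactly the brackets in $\LL$ — any element of $\ker\sigma$ pairs trivially under all $k$-th products that land in $\tL$'s bracket, so it is central. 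This last point is where one invokes that $\CC$ was constructed \emph{from} $\LL$ rather than abstractly, so there are no "extra" relations and the only new elements in $\tL$ are the central ones introduced by the $\del\tC$ quotient not being surjective onto the kernel of coefficient extraction.

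The main obstacle is step (v), establishing that $\ker\sigma$ is central rather than merely a proper ideal. Surjectivity and the homomorphism property are bookkeeping with (\ref{kprod}), but centrality requires showing that the twisted affinization does not introduce non-central elements beyond those already present in $\LL$. The clean way is to observe that $\tL$ carries its own spanning family of mutually local distributions $\tilde x^j(z) = \sum_n (t^n x^j)z^{-n-1}$, that $\sigma$ maps these distributions to the original $x^j(z)$ coefficient-by-coefficient, and that the $k$-th products among the $\tilde x^j$ are determined by the same structure constants (the distributions $c^{(k)}$) as in $\LL$ — so any $v\in\ker\sigma$, being a limit of $\C$-combinations of coefficients of the $\tilde x^j$, has all its brackets with the $\tilde x^j_n$ computed via those structure constants, which vanish precisely because $\sigma(v)=0$. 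Hence $\ker\sigma$ is annihilated by the span of all $\tilde x^j_n$, which is all of $\tL$, so it is central. I would cite \cite{K} for the analogous untwisted computation and note that the twist affects only the grading and the cosets $\alpha$, not the algebraic identities, exactly as in the preceding theorems of this section.
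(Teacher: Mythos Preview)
Your proposal correctly identifies step (v), centrality of $\ker\sigma$, as the crux, but the resolution you offer does not work. From $\sigma(v)=0$ and the fact that $\sigma$ is a surjective homomorphism you only get $[v,w]\in\ker\sigma$, i.e.\ $\ker\sigma$ is an ideal. Your further claim---that the brackets $[v,\tilde x^j_n]$ ``vanish precisely because $\sigma(v)=0$'' since ``the $k$-th products are determined by the same structure constants''---is circular: knowing that $\Con(\tL)\cong\Con(\LL)$ tells you the conformal-algebra structure constants agree, but it does not by itself let you lift a single relation $\sum_n x^n_n=0$ in $\LL$ to relations among the $x^n$ \emph{in the conformal algebra} $\CC$. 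Without such relations in $\CC$, you cannot conclude the bracket in $\tL$ is zero.

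The paper fills this gap with a separate lemma that crucially uses $T$-covariance (which you invoke only in step (i) and then abandon). The lemma says: if $\sum_{n\in S} z^n x^n(z)=0$ for a finite family of $T$-covariant distributions, then each $x^n(z)=0$. This is proved by applying $z\frac{d}{dz}$ repeatedly (using $T$-covariance) and a Vandermonde argument. With this lemma in hand, one takes $v=\sum_n t^n x^n\in\ker\sigma$, brackets $\sigma(v)=\sum_n x^n_n=0$ with an arbitrary $y(w)\in\bF$, expands via (\ref{kprod}), and obtains $\sum_m w^m\bigl(\sum_{j\ge 0}\binom{m+j}{j}(x^{m+j}_{(j)}y)(w)\bigr)=0$. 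The lemma then forces each inner sum to vanish \emph{as an element of} $\CC$. These relations in $\CC$ are exactly what make $[v,t^k y]=\sum_m t^{m+k}\sum_{j\ge 0}\binom{m+j}{j}x^{m+j}_{(j)}y$ vanish in $\tL$. Your argument is missing this transfer from $\LL$-relations to $\CC$-relations, and without it step (v) fails. (Minor point: your justification in step (ii) for $\sigma(\del\tC)=0$ is also garbled; the correct reason is the direct computation $(\del x)_n=-nx_{n-1}$, so $\sigma(\del(t^n x))=nx_{n-1}+(\del x)_n=0$.)
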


Before we give the proof, let us illustrate this theorem with a simple example. 

\begin{example}
\normalfont
Let $\LL$ be a Heisenberg Lie algebra with a basis 
\break
$\left\{ x_n, \, c \,|\, n \in \Z \right\}$ and the Lie bracket
$[x_n, x_k] = n \delta_{n, -k} c$. Then $\LL$ is a regular Lie algebra with the spanning 
distributions $x(z) = \sum\limits_{n \in \Z} x_n z^{-n-1}$, $c(z) = c z^0$ 
and the derivation $T(x_n) = -n x_{n-1}$, $T(c) = 0$.
Let us take a regular subalgebra $\LL_1 \subset \LL$ with the spanning distributions
$y(z) = \left(\frac{d}{dz}\right)^2 x(z)$ and $c(z)$. Then $\LL_1$ has basis
$\left\{ x_n, c \,|\, n \in \Z\backslash \{ -2, -1 \} \right\}.$
It is easy to calculate that $\tL_1 = \Alg(\Con(\LL_1))$ is a Lie algebra 
with a basis  $\left\{ y_n, c \,|\, n \in \Z \right\}$ and Lie bracket
$[y_n, y_k] = 5! {n \choose 5} \delta_{n+k, 4} c$
and the derivation $T(y_n) = -n y_{n-1}$.
The homomorphism 
$\tL_1 \rightarrow \LL_1$ is given by $y_n \mapsto n (n-1) x_{n-2}$. Its kernel is a 
central ideal spanned by $y_0$, $y_1$. 
\end{example}

 The proof of Theorem \ref{AlgCon} is based on the following Lemma:
 \begin{lemma}
 \label{zshifts}
 Let $S$ be a finite subset in $\C$. Assume
 \begin{equation}
 \label{zdep}
 \sum_{n \in S} z^{n} x^n (z) = 0,
 \end{equation}
 where $\{ x^n (z) \, | \, n \in S \}$ is a family of $T$-covariant distributions in $\LL$.
 Then $x^n (z) = 0$ for every $n \in S$.
 \end{lemma}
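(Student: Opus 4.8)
\textbf{Proof proposal for Lemma \ref{zshifts}.}

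The plan is to exploit the $T$-covariance of the distributions, which ties the coefficient $x^n_k$ (for $k$ in the appropriate coset) to $x^n_{k+1}$ via the action of $T$, and to compare coefficients of fixed powers of $z$ in \eqref{zdep}. First I would unpack the meaning of \eqref{zdep}: writing each $x^n(z) = \sum_{k \in \alpha_n} x^n_k z^{-k-1}$, the relation $\sum_{n\in S} z^n x^n(z) = 0$ means that for every exponent $m \in \C$ the sum $\sum_{n \in S} x^n_{m+n}$ vanishes (with the convention that $x^n_j$ is taken to be $0$ when $j$ does not lie in the coset $\alpha_n$). Since $S$ is finite, only finitely many cosets $\alpha_n$ occur, and after grouping the $n$'s that share a common coset-shift we may reduce to the case where all the shifted distributions are supported on a single coset $\alpha \in \C/\Z$; distributions supported on different cosets contribute to disjoint sets of exponents $m$, so the relation splits accordingly.

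Having reduced to a single coset, for each $k$ in that coset we have a finite linear relation $\sum_{n \in S} x^n_{k+n} = 0$ among elements of $\LL$. Now I would bring in $T$-covariance: since $T(x^n_j) = -j\, x^n_{j-1}$ (this is the coefficient form of $T(x^n(z)) = \frac{\partial}{\partial z} x^n(z)$, being careful with the index convention used in the paper), applying $T$ repeatedly to the relation and forming suitable linear combinations lets me replace each $x^n_{k+n}$ by a scalar multiple of $x^n_{k}$ up to lower-index corrections — or, more cleanly, I would apply powers of $T$ to the single relation $\sum_n x^n_{k+n}=0$ to generate the family of relations $\sum_n (\prod) x^n_{k+n-t} = 0$, and then use the fact that the shifts $n$ (for $n\in S$) are \emph{distinct complex numbers} to invert a Vandermonde-type system. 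Concretely, the key observation is that the generating functions $z^n x^n(z)$ are eigenvectors of the "degree" operator $z\frac{d}{dz}$-type action with distinct eigenvalue-shifts $n$, so a Vandermonde argument in the finitely many distinct values $\{n : n \in S\}$ isolates each $x^n(z)$.

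The cleanest route, which I would actually carry out: multiply \eqref{zdep} by an arbitrary Laurent monomial and differentiate, or equivalently note that $\left(z\frac{d}{dz}\right)$ acts on $z^n x^n(z)$ in a controlled way; iterating and using that the $|S|$ values of $n$ are pairwise distinct, the corresponding Vandermonde matrix is invertible, yielding $z^n x^n(z) = 0$ hence $x^n(z) = 0$ for each $n \in S$. The main obstacle is bookkeeping rather than conceptual: one must handle the index-shift conventions (the $z^{-k-1}$ versus $z^{-k}$ normalizations and the coset of support) so that "coefficient of $z^m$" statements are literally correct, and one must make sure the Vandermonde argument is applied coefficient-by-coefficient in $\LL$ (it is, since each relation lives in the vector space $\LL$ and the scalar matrix acts diagonally). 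The $T$-covariance hypothesis is what upgrades the individual coefficient relations into the rigid algebraic structure that makes the distinct-shift Vandermonde argument available; without it, \eqref{zdep} would only say the coefficients satisfy a single additive relation per degree, which is far too weak.
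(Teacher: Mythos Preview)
Your approach is correct and is essentially the paper's: produce a Vandermonde system in the distinct shifts $n\in S$ using $T$-covariance, then invert. The paper executes this in one clean stroke by applying the operator $zT$ to \eqref{zdep}: since $T(x^n(z))=\tfrac{d}{dz}x^n(z)$, one gets $0=\sum_n z^{n+1}\tfrac{d}{dz}x^n(z) = z\tfrac{d}{dz}\big(\sum_n z^n x^n(z)\big)-\sum_n n\,z^n x^n(z) = -\sum_n n\,z^n x^n(z)$, so iterating yields $\sum_n n^k z^n x^n(z)=0$ for all $k$ and Vandermonde finishes it --- in other words, your ``$z\tfrac{d}{dz}$-type'' operator is precisely $z\tfrac{d}{dz}-zT$, which genuinely acts as multiplication by $n$ on $z^n x^n(z)$ for $T$-covariant $x^n$, making the coset bookkeeping and coefficient-level detour unnecessary.
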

 \begin{proof}
 Apply $zT$ to (\ref{zdep}):
 \begin{align*}
 0 &= \sum_{n \in S} z^{n+1} T(x^n (z)) =  \sum_{n \in S} z^{n+1} \frac{d}{dz} x^n (z) \\
 &=  z \frac{d}{dz} \sum_{n \in S} z^n x^n (z) - \sum_{n \in S}  z \frac{d}{dz} (z^n) x^n (z) 
 = - \sum_{n \in S} n z^{n} x^n (z) = 0.
 \end{align*}
Iterating these steps and applying Vandermonde determinant argument, we get the claim of the Lemma.
\end{proof}
 
 Now we can give a proof of Theorem \ref{AlgCon}.
 
 \begin{proof}
 It was proved in Section 2.7 of \cite{K} that $\sigma$ is a homomorphism of Lie algebras. Let us show that $\Ker \sigma$ 
 is central in $\tL$. Suppose $\sum\limits_{n \in S} t^n x^n \in \Ker \sigma$ for some finite family $\{ x^n \, | \, n \in S\} \subset \bF$. Then  $\sum\limits_{n \in S} x^n_n = 0$ in $\LL$. Taking the commutator with an arbitrary distribution $y(w) \in \bF$
 and applying (\ref{kprod}), we get
 $$ \sum_{n \in S} \sum_{j \geq 0} {n \choose j} w^{n-j} (x^n_{(j)} y) (w) = 0.$$
 Let us formally assume $x^n = 0$ for $n \not\in S$. Making a change of indices $m = n - j$, we get
 $$ \sum_{m} \sum_{j \geq 0} {m+j \choose j} w^{m} (x^{m+j}_{(j)} y) (w) = 0.$$
 By Lemma \ref{zshifts} we conclude that for every $m$
 \begin{equation}
 \label{zerosum}
 \sum\limits_{j \geq 0} {m+j \choose j} x^{m+j}_{(j)} y = 0
 \end{equation}
 in $\Con(\LL)$.
 
 Now consider the commutator in $\tL$:
 \begin{align*}
 \left[ \sum_{n \in S} t^n x^n, t^k y \right] = 
 \sum_{n \in S}  \sum_{j \geq 0} {n \choose j} t^{n+k-j} x^n_{(j)} y.
 \end{align*}
 Making the change of indices $m = n - j$, and applying (\ref{zerosum}), we obtain
 \begin{align*}
 \sum_{m} t^{m+k}  \sum_{j \geq 0} {m+j \choose j} x^{m+j}_{(j)} y = 0.
 \end{align*}
 Hence, every element of $\Ker \sigma$ is central in $\tL$. 
 \end{proof}
 
 Let us conclude this section by giving an example of a (non-simple) superconformal algebra which is a graded Lie superalgebra 
 of formal distributions, but not regular. As a result, one can associate with this superconformal algebra
 a finitely generated affine conformal superalgebra, but not a finite one.
 
 \begin{example}
\label{nonfinite} 
\normalfont
Let $\alpha \in \C$ and
let $\LL$ be a Lie algebra with a basis $\{ L_n, U_n, V_n \, | \, n \in \Z \}$ with Lie brackets
\begin{align*}
&[L_n, L_m] = (n-m) L_{n+m}, \\
&[L_n, U_m] = - (m + \alpha n) U_{n+m}, \\
&[L_n, V_m] = - (m + \alpha n) V_{n+m} + n U_{n+m}, \\
&[U_n, U_m] = [U_n, V_m] = [V_n, V_m] = 0.
\end{align*}
\end{example}

\section{Quasi-Poisson algebras}

In this section we will discuss a class of superconformal algebras that does not include all superconformal algebras, but 
encompasses all known simple superconformal algebras.

\begin{definition}
Let $A$ be a $\Z_2$-graded space with two bilinear operations $\cdot$ and $\{ , \}$ and with an even endomorphism
$P \in \End(A)$. We call $A$ a quasi-Poisson superalgebra (or QP-superalgebra, for short) if the following axioms hold:

(QP1) $(A, \cdot)$ is a commutative associative unital superalgebra,

(QP2) $(A, \{ , \})$ is a Lie superalgebra,

(QP3) for all $a, b \in A$
$$ P(a \cdot b) = P(a) \cdot b  + a \cdot P(b) - a \cdot b,$$

(QP4) for all $a, b, c \in A$
\begin{align*}
 a \cdot P( \{ b, c \}) &= \{ a \cdot P(b), c \} + (-1)^{\pp{a}\pp{b}} \{ b, a \cdot P(c) \} \\
 &+ \{ a, b \} \cdot P(c) - (-1)^{\pp{a} \pp{b}} P(b) \cdot \{a, c\}.
 \end{align*}
 
\end{definition}

 Let us point out some elementary consequences of these axioms. 
 
 Axiom (QP3) is equivalent to the statement that operator $\DD = P - \Id$ is a derivation
 of the commutative superalgebra $(A, \cdot)$. This implies $P(1) = 1$. 
 
 Setting $a=1$ in (QP4) we get
 \begin{equation}
\label{Pbracket}
 P( \{ b, c \}) = \{ P(b), c \} + \{ b, P(c) \} 
 + \{ 1, b \} \cdot P(c) - P(b) \cdot \{1, c\} .
 \end{equation}
 
 Further setting $b = 1$ we get
 \begin{equation}
 \label{Pad}
 P ( \{ 1, c \} ) = \{ 1, P(c) \},
 \end{equation}
which means that $P$ commutes with $\ad (1)$.

 Setting $c=1$ in (QP4) and taking into account (\ref{Pad}), we get
 \begin{equation}
 \label{adiff}
 \{ 1, a \cdot P(b) \} = \{ 1, a \} \cdot P(b) + a \cdot \{ 1, P(b) \}.
 \end{equation}
 

Let us mention three natural classes of quasi-Poisson algebras. 

\begin{lemma}
\label{poisson}
Let $A$ be a Poisson superalgebra with an even derivation $\DD$ of $(A, \cdot)$ such that $P = \DD + \Id$ is a derivation of
$(A, \{,\})$. Then $A$ is a quasi-Poisson superalgebra.
\end{lemma}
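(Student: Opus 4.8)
The plan is to verify axioms (QP1)--(QP4) one at a time. Axioms (QP1) and (QP2) are immediate, since by definition a Poisson superalgebra consists of a supercommutative associative unital multiplication together with a compatible Lie bracket. For (QP3), I would simply unwind the hypothesis that $\DD = P - \Id$ is a derivation of $(A, \cdot)$: the identity $\DD(a \cdot b) = \DD(a) \cdot b + a \cdot \DD(b)$, with $\DD$ replaced by $P - \Id$, rearranges at once to $P(a \cdot b) = P(a) \cdot b + a \cdot P(b) - a \cdot b$, which is (QP3). (Setting $a = b = 1$ recovers $P(1) = 1$.)

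The real content is (QP4), and my approach is to transform its right-hand side into its left-hand side using only the Poisson super-Leibniz rule, super-antisymmetry, supercommutativity of $\cdot$, and the hypothesis that $P$ is a derivation of the bracket. First I apply the Leibniz rule $\{x \cdot y, z\} = x \cdot \{y, z\} + (-1)^{\pp{y}\pp{z}} \{x, z\} \cdot y$ to the term $\{a \cdot P(b), c\}$, using that $P$ is even so that $P(b)$ has parity $\pp{b}$; moving the factor $\{a,c\}$ past $P(b)$ by supercommutativity turns this into $a \cdot \{P(b), c\} + (-1)^{\pp{a}\pp{b}} P(b) \cdot \{a, c\}$, and the second summand cancels the last term $-(-1)^{\pp{a}\pp{b}} P(b) \cdot \{a, c\}$ of (QP4). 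Symmetrically, the Leibniz rule in the second slot gives $\{b, a \cdot P(c)\} = \{b, a\} \cdot P(c) + (-1)^{\pp{a}\pp{b}} a \cdot \{b, P(c)\}$, and using super-antisymmetry in the form $(-1)^{\pp{a}\pp{b}}\{b, a\} = -\{a, b\}$, the first summand (after the overall factor $(-1)^{\pp{a}\pp{b}}$ in (QP4) is applied) cancels $\{a,b\} \cdot P(c)$. What remains of the right-hand side is $a \cdot \{P(b), c\} + a \cdot \{b, P(c)\} = a \cdot P(\{b, c\})$, the last equality being exactly the statement that $P$ is a derivation of $(A, \{,\})$; this is the left-hand side of (QP4).

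I do not anticipate a genuine difficulty, only the usual Koszul-sign bookkeeping: the argument hinges on the identity $(-1)^{\pp{y}\pp{z}} \{x, z\} \cdot y = (-1)^{\pp{x}\pp{y}} y \cdot \{x, z\}$ (all exponents read modulo $2$), which is what makes the two cancellations exact, and one should note that the even derivation $\DD$ itself plays no role in (QP4) beyond guaranteeing (QP3). An alternative, more mechanical route would be to substitute $P = \Id + \DD$ everywhere in (QP4) and split the resulting identity into its $\DD$-free and $\DD$-linear parts; this works too, but produces more terms and hides the cancellation mechanism, so I would present the direct computation above.
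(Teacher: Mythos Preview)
Your proof is correct and is precisely the straightforward verification the paper has in mind; the paper itself does not spell out a proof, stating only that verification of this lemma (together with the two that follow it) is straightforward. Your sign bookkeeping is accurate, in particular the key cancellation identity $(-1)^{\pp{y}\pp{z}} \{x,z\} \cdot y = (-1)^{\pp{x}\pp{y}} y \cdot \{x,z\}$ holds because $\{x,z\}$ has parity $\pp{x}+\pp{z}$.
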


\begin{lemma}
\label{AD}
Let $(A, \cdot)$ be a commutative unital superalgebra with an even derivation $\DD$. Define a Lie bracket $\{,\}$ on $A$ via
$$ \{ a , b \} = a \cdot \DD(b) - \DD(a) \cdot b .$$ 
Then $A$ is a quasi-Poisson superalgebra with $P = \DD + \Id$. 
\end{lemma}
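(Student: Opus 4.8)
The plan is to check the four defining axioms of a quasi-Poisson superalgebra in turn; (QP1) is part of the hypothesis. Axiom (QP3) requires no work: by definition $\DD = P - \Id$, and since $\DD$ is a derivation of $(A,\cdot)$ we have $\DD(a\cdot b) = \DD(a)\cdot b + a\cdot\DD(b)$, which upon substituting $\DD = P - \Id$ becomes precisely $P(a\cdot b) = P(a)\cdot b + a\cdot P(b) - a\cdot b$, i.e.\ (QP3) (and, as noted after the definition of a QP-superalgebra, this forces $P(1)=1$). So the real content is (QP2) and (QP4).

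For (QP2) I first observe that skew-supersymmetry of $\{\cdot,\cdot\}$ is immediate: since $\DD$ is even, $\DD(a)$ and $\DD(b)$ carry the parities of $a$ and $b$, and supercommutativity of $\cdot$ turns $a\cdot\DD(b) - \DD(a)\cdot b$ into $-(-1)^{\pp{a}\pp{b}}\bigl(b\cdot\DD(a) - \DD(b)\cdot a\bigr)$. For the super Jacobi identity the cleanest route is the operator realization $\mathrm{ad}_a := \{a,\cdot\} = m_a\circ\DD - m_{\DD(a)}$ as an endomorphism of $A$, where $m_x$ denotes left multiplication by $x$. A direct computation gives $[m_a\circ\DD,\, m_b\circ\DD] = m_{\{a,b\}}\circ\DD$ (the two $\DD^2$-terms cancel by supercommutativity of $\cdot$), $[m_a\circ\DD,\, m_{\DD(b)}] = m_{a\cdot\DD^2(b)}$, and $[m_{\DD(a)},\, m_{\DD(b)}] = 0$; combining these, and using $\DD(\{a,b\}) = a\cdot\DD^2(b) - \DD^2(a)\cdot b$, one obtains $[\mathrm{ad}_a,\mathrm{ad}_b] = \mathrm{ad}_{\{a,b\}}$ in $\End(A)$. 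Evaluating both sides at an arbitrary $c\in A$ is exactly the super Jacobi identity. (Equivalently, $\{a,b\}$ is the super-commutator of the Gelfand--Dorfman Novikov product $a\ast b := a\cdot\DD(b)$.)

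The bulk of the work is (QP4). The plan is to expand both sides by writing $P = \DD + \Id$ and applying the Leibniz rule for $\DD$. The useful intermediate identity is $P(\{b,c\}) = b\cdot\DD^2(c) - \DD^2(b)\cdot c + b\cdot\DD(c) - \DD(b)\cdot c$, coming from $\DD(\{b,c\}) = b\cdot\DD^2(c) - \DD^2(b)\cdot c$. Treating each of the four terms on the right-hand side of (QP4) the same way rewrites everything in terms of $\cdot$, $\DD$ and $\DD^2$ only; collecting the monomials (those of the shapes $a\,b\,\DD^2 c$, $a\,\DD b\,\DD c$, $\DD a\,b\,\DD c$, $\DD a\,\DD b\,c$, and their lower-order analogues carrying one fewer $\DD$) and comparing with the similarly expanded left-hand side, one verifies that all terms cancel. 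I expect this cancellation to be the main obstacle, and it is entirely a matter of bookkeeping the Koszul signs $(-1)^{\pp{a}\pp{b}}$: each monomial can be brought to a fixed ordering at the cost of the product of the pairwise sign corrections, after which the identity is purely combinatorial.

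Finally it is worth recording that this lemma is strictly more general than Lemma~\ref{poisson}: for the bracket $\{a,b\} = a\cdot\DD(b) - \DD(a)\cdot b$ the triple $(A,\cdot,\{\cdot,\cdot\})$ is in general \emph{not} a Poisson superalgebra, since $\{a,b\cdot c\}$ and $\{a,b\}\cdot c + (-1)^{\pp{a}\pp{b}}b\cdot\{a,c\}$ differ by $\DD(a)\cdot b\cdot c$; absorbing precisely such a discrepancy is the purpose of the weakened compatibility axioms (QP3)--(QP4).
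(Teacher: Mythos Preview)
Your proposal is correct and is precisely the direct verification that the paper declares ``straightforward'' and omits; there is nothing further to compare against. Your operator-theoretic derivation of the super Jacobi identity via $\mathrm{ad}_a = m_a\circ\DD - m_{\DD(a)}$ is a clean way to organize the computation, and your closing remark on the failure of the Poisson--Leibniz rule (the defect $\DD(a)\cdot b\cdot c$) is accurate and echoes the paper's later observation that this construction is a special case of Lemma~\ref{QPA} rather than of Lemma~\ref{poisson}.
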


\begin{lemma}
\label{QPA}
Let $(A, \cdot)$ be a commutative unital superalgebra with two even operators $P, Q \in \End(A)$, such that $PQ = QP$,
$P$ satisfies (QP3), $Q$ satisfies
\begin{equation}
\label{Qdiff}
Q(a \cdot P(b)) = Q(a) \cdot P(b) + a \cdot PQ(b).
\end{equation}
Then operation
\begin{equation}
\label{newbrak}
\br{a}{b} = Q(a) \cdot P(b) - P(a) \cdot Q(b)
\end{equation}
is a Lie bracket, and $(A, \cdot, \br{}{})$ is a QP-superalgebra.
\end{lemma}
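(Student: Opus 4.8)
\textbf{Proof proposal for Lemma \ref{QPA}.}

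The plan is to verify the four QP-axioms directly, exploiting the symmetry between $P$ and $Q$ built into the definition of the bracket $\br{a}{b} = Q(a)\cdot P(b) - P(a)\cdot Q(b)$. Axioms (QP1) and (QP3) hold by hypothesis, so the content is (QP2) and (QP4). First I would record the key observation that $(\ref{Qdiff})$ together with the commutativity $PQ=QP$ yields a twin identity obtained by swapping the roles of $P$ and $Q$: since $P$ itself satisfies a $(\ref{Qdiff})$-type relation trivially only if it satisfies (QP3)-with-$P$ in the role of the multiplier... more carefully, what one actually needs is to expand $Q$ and $P$ as $\DD_Q+\Id$ and $\DD_P+\Id$ where $\DD_P=P-\Id$, $\DD_Q=Q-\Id$; then (QP3) says $\DD_P$ is a derivation of $(A,\cdot)$, and $(\ref{Qdiff})$ unwinds to the statement that $\DD_Q$ is a derivation of $(A,\cdot)$ as well, modulo a $\DD_P$-correction term that cancels. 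Establishing cleanly that \emph{both} $\DD_P$ and $\DD_Q$ are derivations of the commutative product (possibly after absorbing cross terms using $PQ=QP$) is the natural first step, since it linearizes all subsequent Leibniz-rule manipulations.

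Next I would prove antisymmetry of $\br{}{}$, which is immediate: $\br{a}{b}$ is manifestly skew under $a\leftrightarrow b$ up to the sign $(-1)^{\pp a\pp b}$ coming from commuting $Q(a)$ past $P(b)$ in the supercommutative algebra, so $\br{a}{b} = -(-1)^{\pp a \pp b}\br{b}{a}$. The Jacobi identity is the first real computation. Writing $\br{a}{b}$ in terms of $\DD_P, \DD_Q$ and the commutative product, one gets a sum of terms each of the shape $(\text{first-order op})(a)\cdot(\text{first-order op})(b)$; applying the derivation property of $\DD_P,\DD_Q$ to $\br{\br{a}{b}}{c}$ and cyclically summing, the terms organize into pairs that cancel precisely because $\DD_P$ and $\DD_Q$ commute. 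This is the standard mechanism by which a bracket of the form "$D_1(a)D_2(b)-D_2(a)D_1(b)$" with commuting derivations $D_1,D_2$ satisfies Jacobi; I would carry it out with bookkeeping of the Koszul signs. This gives (QP2).

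Finally, for (QP4) I would substitute the definition of $\br{}{}$ into both sides of the axiom and reduce everything to the commutative product using the derivation properties of $\DD_P$ and $\DD_Q$ and the relation $PQ=QP$; the identity $(\ref{Qdiff})$ and its $P\leftrightarrow Q$ analogue are exactly what is needed to match the term $a\cdot P(\br bc)$ (which contains $Q$ applied to a product $Q(b)P(c)$, etc.) against the four terms on the right. I expect the main obstacle to be purely organizational: (QP4) is a six-term super-identity and after expansion each side becomes a dozen-odd monomials in $\DD_P, \DD_Q$ and $\cdot$, so the real work is a disciplined matching of terms with correct signs, rather than any conceptual difficulty. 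A useful simplification is to first verify (QP4) in the ungraded (purely even) case to fix the combinatorics, then reinstate the parity signs by tracking each transposition of factors; since the bracket is bilinear and the axioms are multilinear, it suffices to check on homogeneous $a,b,c$. Once (QP1)--(QP4) are all verified, $(A,\cdot,\br{}{})$ with the given $P$ is a QP-superalgebra, completing the proof.
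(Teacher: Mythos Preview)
The paper itself gives no argument beyond ``Verification of the above lemmas is straightforward'', so your plan of direct expansion is exactly what is intended. However, the specific mechanism you propose contains an error that would derail the Jacobi computation as you have set it up.

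You claim that $(\ref{Qdiff})$ unwinds to ``$\DD_Q = Q-\Id$ is a derivation of $(A,\cdot)$'', and then intend to invoke the standard fact that for two commuting derivations $D_1,D_2$ the bracket $D_1(a)D_2(b)-D_2(a)D_1(b)$ satisfies Jacobi. But $\DD_Q$ is \emph{not} a derivation: using $PQ=QP$, the identity $(\ref{Qdiff})$ is literally the statement that $Q$ itself (not $Q-\Id$) obeys the Leibniz rule on products of the special shape $a\cdot P(b)$; it says nothing about $Q(a\cdot b)$ for general $b$. In the instance of Lemma~\ref{AD} one has $Q=\DD$ a derivation, so $\DD_Q=\DD-\Id$ is visibly not one. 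Likewise $P$ is not a derivation either (only $\DD_P=P-\Id$ is), so $\br{a}{b}$ is not of the form $D_1(a)D_2(b)-D_2(a)D_1(b)$ for any pair of commuting derivations, and that ``standard mechanism'' does not apply verbatim.

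The repair is minor and your overall outline survives. The point is that $\br{a}{b}=Q(a)\cdot P(b)-P(a)\cdot Q(b)$ is a combination of products each having one factor in $\Image P$, and both $P$ (via (QP3)) and $Q$ (via $(\ref{Qdiff})$, after using supercommutativity to put the $P$-factor on the right) satisfy explicit Leibniz-type rules on precisely such products. Hence every term arising in $Q(\br{a}{b})$ and $P(\br{a}{b})$ can be expanded by (QP3) and $(\ref{Qdiff})$ directly, and the disciplined term-matching you describe for Jacobi and for (QP4) then goes through. So carry out the computation with (QP3) and $(\ref{Qdiff})$ themselves as the expansion rules, rather than via a putative derivation property of $\DD_Q$.
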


Verification of the above lemmas is straightforward. Note that quasi-Poisson superalgebra constructed in Lemma \ref{AD} is a special case of QP-superalgebra of Lemma \ref{QPA}, where we take $P = \DD + \Id$ and $Q = \DD$.

It turns out that a quasi-Poisson superalgebra may have more than one QP-structure. First, let us answer the following simple question: given two Lie brackets $\{ , \}$ and $\br{}{}$ on the same vector space, when is their arbitrary linear combination also a Lie bracket?

\begin{lemma}
\label{mLie}
Suppose $\{ , \}$ and $\br{}{}$ are two Lie brackets on the same vector space $A$. For two scalars $s_1, s_2 \in \C$ 
consider a new bilinear operation on $A$:
$$ [a, b] = s_1 \{ a, b \} + s_2 \br{a}{b} .$$
Then $[ , ]$ is a Lie bracket for all values of $s_1, s_2$ if and only if the following identity holds:
$$ \{ \br{a}{b} , c \} + \br{ \{ a, b \} }{c} + \text{ cyclic permutations } = 0.$$
\end{lemma}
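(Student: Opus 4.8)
The plan is to exploit bilinearity in the parameters $s_1,s_2$. The Jacobiator of $[\cdot,\cdot]$, namely $J(a,b,c) = [[a,b],c] + \text{cyclic}$, expands by bilinearity into a sum of four terms, each homogeneous of a fixed bidegree in $(s_1,s_2)$: the $s_1^2$-term is the Jacobiator of $\{\cdot,\cdot\}$, the $s_2^2$-term is the Jacobiator of $\br{}{}$, and the two cross terms $s_1 s_2$ collect the mixed expressions. Since $\{\cdot,\cdot\}$ and $\br{}{}$ are assumed to be Lie brackets, the pure terms vanish identically, so $J(a,b,c) = s_1 s_2\, C(a,b,c)$ where $C(a,b,c)$ is precisely the mixed Jacobiator. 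Thus $[\cdot,\cdot]$ is a Lie bracket for all $s_1,s_2$ if and only if $C(a,b,c)=0$ identically, and it remains to identify $C$ with the displayed identity.

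First I would write out the skew-symmetry check: since both $\{\cdot,\cdot\}$ and $\br{}{}$ are skew (or super-skew) symmetric, any linear combination is too, so only the Jacobi identity is at issue. Then I would carefully expand $J(a,b,c)$. The term $[[a,b],c]$ becomes $s_1^2\{\{a,b\},c\} + s_1 s_2 \br{\{a,b\}}{c} + s_1 s_2 \{\br{a}{b},c\} + s_2^2 \br{\br{a}{b}}{c}$, and similarly for the two cyclic permutations. Summing and using that the $s_1^2$ and $s_2^2$ parts are zero, the coefficient of $s_1 s_2$ is exactly
\[
\sum_{\text{cyclic}} \Bigl( \{\br{a}{b},c\} + \br{\{a,b\}}{c} \Bigr),
\]
which is the left-hand side of the claimed identity. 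This gives one direction immediately, and the converse follows since $s_1 s_2$ can be made nonzero.

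The main subtlety — rather than an obstacle — is bookkeeping in the cyclic sums: one must be sure the three cyclic permutations of $\{\br{a}{b},c\}$ together with the three of $\br{\{a,b\}}{c}$ reassemble into precisely the expression "$\{\br{a}{b},c\} + \br{\{a,b\}}{c} + \text{cyclic permutations}$" as written, with no stray terms. A second point worth a remark is the sign conventions in the super setting: if the brackets are super-Lie, the cross-Jacobiator and the displayed identity should both be read with Koszul signs, and one checks these match; in the ungraded statement as phrased this is moot. No deep input is needed — the proof is a direct bilinear expansion, and the only care required is tracking the six cyclic terms correctly.
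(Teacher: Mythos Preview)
Your argument is correct: the bilinear expansion of the Jacobiator in $(s_1,s_2)$ isolates the mixed term $s_1 s_2\, C(a,b,c)$, and vanishing of $C$ is equivalent to the Jacobi identity holding for all $s_1,s_2$. The paper itself simply omits the proof as elementary, so there is nothing further to compare.
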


The proof is elementary and we omit it. We shall call two Lie brackets satisfying the condition of Lemma \ref{mLie} compatible.

\begin{proposition}
\label{doubleQP}
Let $(A, \cdot, \{,\}, P)$ be a quasi-Poisson algebra. Consider operator $Q = \ad(1)$ and a new Lie bracket $\br{}{}$ given by
(\ref{newbrak}). Then Lie brackets $\{,\}$ and $\br{}{}$ are compatible and $A$ is a quasi-Poisson algebra with respect to an arbitrary linear combination of these two Lie brackets.
\end{proposition}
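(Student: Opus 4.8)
The plan is to establish three things, in order: (i) that the operation $\br{}{}$ of (\ref{newbrak}) with $Q=\ad(1)$ is a Lie bracket and $(A,\cdot,\br{}{},P)$ is a QP-superalgebra; (ii) that $\{,\}$ and $\br{}{}$ satisfy the compatibility identity of Lemma \ref{mLie}; and (iii) that for every $s_1,s_2\in\C$ the tuple $(A,\cdot,\,s_1\{,\}+s_2\br{}{},\,P)$ obeys all four QP-axioms. Step (iii) will be essentially formal once (i) and (ii) are in hand: axiom (QP1) involves only $\cdot$, axiom (QP3) involves only $P$ and $\cdot$, axiom (QP2) for $s_1\{,\}+s_2\br{}{}$ is exactly what Lemma \ref{mLie} delivers given (ii), and axiom (QP4) is linear in the Lie bracket, so $s_1$ times (QP4) for $\{,\}$ plus $s_2$ times (QP4) for $\br{}{}$ yields (QP4) for the combination. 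Thus the substance is in (i) and, chiefly, (ii).

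For (i) I would apply Lemma \ref{QPA} to $(A,\cdot)$ with the given $P$ and with $Q=\ad(1)$. Its hypotheses hold: $PQ=QP$ is precisely (\ref{Pad}); $P$ satisfies (QP3) because $A$ is a QP-superalgebra; and (\ref{Qdiff}) for $Q=\ad(1)$ asserts $\{1,a\cdot P(b)\}=\{1,a\}\cdot P(b)+a\cdot P(\{1,b\})$, which is (\ref{adiff}) rewritten using (\ref{Pad}). Lemma \ref{QPA} then gives simultaneously that $\br{}{}$ is a Lie bracket and that $(A,\cdot,\br{}{},P)$ is a QP-superalgebra, so in particular (QP4) holds for the pair $(\br{}{},P)$, as needed in step (iii).

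The heart of the matter is (ii). The useful reformulation is that (\ref{Pbracket}) says exactly
$$ \br{a}{b}=\{1,a\}\cdot P(b)-P(a)\cdot\{1,b\}=P(\{a,b\})-\{P(a),b\}-\{a,P(b)\}, $$
so $\br{}{}$ measures the failure of the even operator $P$ to be a derivation of $\{,\}$. Substituting this into the left-hand side of the compatibility identity and expanding, all terms in which $P$ is the outermost operation assemble into $P$ applied to the (super) Jacobiator of $\{,\}$ and hence vanish; what remains is the cyclic sum, taken with Koszul signs, of three families of iterated brackets, namely $\{\{P(a),b\},c\}$, $\{\{a,P(b)\},c\}$ and $\{\{a,b\},P(c)\}$. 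Applying the super Jacobi identity for $\{,\}$ to the first of these families rewrites its cyclic sum as minus the sum of the cyclic sums of the other two, and the three contributions cancel. This is legitimate because $P$ is even, so $\pp{P(a)}=\pp{a}$ and no unexpected signs intrude; I expect the only real difficulty here to be the bookkeeping of Koszul signs through this expansion and resummation, the rest being bilinearity and identities already in hand.

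Finally, with (ii) proved, Lemma \ref{mLie} shows that $s_1\{,\}+s_2\br{}{}$ is a Lie bracket for all $s_1,s_2\in\C$, and the observations of the first paragraph then give axioms (QP1)--(QP4) for $(A,\cdot,\,s_1\{,\}+s_2\br{}{},\,P)$, completing the proof.
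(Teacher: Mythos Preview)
Your proof is correct and shares its overall architecture with the paper's: steps (i) and (iii) are handled identically (Lemma \ref{QPA} via (\ref{Pad}) and (\ref{adiff}), then linearity of (QP4) in the bracket). The difference is in step (ii). The paper keeps $\br{}{}$ in its defining form $Q(a)\cdot P(b)-P(a)\cdot Q(b)$, expands $\br{\{a,b\}}{c}$ using that $Q=\ad(1)$ is a derivation of $\{,\}$, and then the cyclic sum collapses by a direct application of (QP4). You instead invoke (\ref{Pbracket}) to rewrite $\br{a}{b}=P(\{a,b\})-\{P(a),b\}-\{a,P(b)\}$ and reduce compatibility to three instances of the super Jacobi identity for $\{,\}$ alone (namely $J(P(a),b,c)+J(a,P(b),c)+J(a,b,P(c))=0$), with the $P$-outermost terms vanishing as $P$ of the Jacobiator. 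Your route is a bit more conceptual—it shows compatibility follows from the sole fact that $\br{}{}$ is the derivation-defect of the even operator $P$—whereas the paper's route stays closer to the product structure and makes the use of (QP4) explicit; both are short and the sign bookkeeping you flag is indeed the only place requiring care.
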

\begin{proof}
First of all, we point out that $Q$ satisfies the conditions of Lemma \ref{QPA} by (\ref{Pad}) and (\ref{adiff}). Let us show that 
 Lie brackets $\{,\}$ and $\br{}{}$ are compatible. We have
 \begin{equation*}
  \{ \br{a}{b} , c \} = \{ Q(a) \cdot P(b), c \} - \{ P(a) \cdot Q(b) \}. 
 \end{equation*}
 and
 \begin{equation*}
 \br{ \{ a, b \} }{c} = \{ Q(a), b \} \cdot P(c) + \{ a, Q(b) \} \cdot P(c) - P\left( \{ a, b \} \right) \cdot Q(c) .
 \end{equation*}
 Adding together cyclic permutations of two previous expressions and applying (QP4), we get zero, hence the two Lie brackets are compatible. Since axiom (QP4) contains a single Lie bracket in each of its terms, it holds for any linear combination of these two Lie brackets.
\end{proof}

Let us now construct a functor from the category of quasi-Poisson superalgebras to the category of Lie superalgebras.

\begin{theorem}
\label{mainQP}
Let $A$ be a quasi-Poisson superalgebra. Set $\LL(A) = \C [t, t^{-1}] \otimes A$ and for $a \in A$ denote $t^n \otimes a$ by
$a_n$. Define a bilinear multiplication $[ , ]$ on $\LL(A)$ via
\begin{equation}
\label{Lie}
[a_n, b_m ]   = n ( a \cdot P(b) )_{n+m} - m ( P(a) \cdot b )_{n+m} + \{ a, b \}_{n+m} .
\end{equation}  
Then $( \LL(A), [ , ] )$ is a Lie superalgebra.
\end{theorem}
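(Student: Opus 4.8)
The plan is to check the two required properties of a Lie superbracket on $\LL(A) = \C[t,t^{-1}] \otimes A$: skew-supersymmetry and the super Jacobi identity. Since the bracket (\ref{Lie}) is defined on the graded pieces $a_n = t^n \otimes a$ and then extended bilinearly, it suffices to verify both on elements of this form, with $\pp{a_n} = \pp{a}$.

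First I would dispatch skew-supersymmetry. Writing out $[b_m, a_n] = m(b\cdot P(a))_{n+m} - n(P(b)\cdot a)_{n+m} + \{b,a\}_{n+m}$ and using that $(A,\cdot)$ is supercommutative (so $a\cdot P(b) = (-1)^{\pp{a}\pp{b}} P(b)\cdot a$, etc.) together with skew-supersymmetry of $\{,\}$, one gets $[b_m,a_n] = -(-1)^{\pp{a}\pp{b}}[a_n,b_m]$ immediately; this is a short formal manipulation requiring only (QP1) and (QP2).

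The substance of the proof is the super Jacobi identity
$$(-1)^{\pp{a}\pp{c}}[[a_n,b_m],c_k] + (-1)^{\pp{b}\pp{a}}[[b_m,c_k],a_n] + (-1)^{\pp{c}\pp{b}}[[c_k,a_n],b_m] = 0.$$
Here I would expand $[[a_n,b_m],c_k]$ using (\ref{Lie}) twice. The bracket (\ref{Lie}) is a sum of three kinds of terms, each carrying a power of $t$ and being either $n(a\cdot P(b))$-type (an ``outer'' multiplication weighted by the first index), $-m(P(a)\cdot b)$-type, or $\{a,b\}$-type; substituting $[a_n,b_m]$ into the first slot of another bracket produces, after collecting, terms that are products involving $\cdot$, $P$, and $\{,\}$ applied to $a,b,c$, each multiplied by a monomial in $n,m,k$. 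Grouping by the type of polynomial coefficient in $n,m,k$ — for instance, the coefficient of $nm$, of $n^2$, of $nk$, of the purely ``Lie-Lie'' terms, etc. — the cyclic sum of each homogeneous group must vanish separately. The terms with two Lie brackets reduce to the Jacobi identity for $\{,\}$ (axiom (QP2)); the terms mixing one multiplication and one Lie bracket reduce to (QP4) and its consequences (\ref{Pbracket}), (\ref{Pad}), (\ref{adiff}); and the terms with two multiplications reduce to associativity plus axiom (QP3), i.e. the fact that $\DD = P - \Id$ is a derivation of $(A,\cdot)$. The bookkeeping is the point: one should organize the expansion so that $Q = \ad(1)$ does not appear — all ingredients are $\cdot$, $P$, and $\{,\}$ — and then match each homogeneous-in-indices block against exactly one axiom.

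The main obstacle will be the mixed block, the terms with one factor of $\{,\}$ and one factor of $\cdot$ (with a $P$ somewhere), since this is precisely where axiom (QP4) — which was engineered for this purpose — must be invoked, and one has to be careful with the Koszul signs $(-1)^{\pp{a}\pp{b}}$ when moving the third element past the others. I expect a useful simplification: collecting the coefficients of $n^2$, $m^2$, $k^2$, $nm$, $mk$, $nk$ in the fully expanded cyclic sum gives a small number of identities, each of which is either trivially zero by supercommutativity/associativity or is (QP4) read with an appropriate specialization; and the index-free (constant in $n,m,k$) part is the Jacobi identity for $\{,\}$. Once every block is shown to vanish, the theorem follows.
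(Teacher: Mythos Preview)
Your proposal is correct and matches the paper's approach exactly. The paper's own proof is a two-sentence remark that the verification is ``straightforward'': anticommutativity follows from supercommutativity of $\cdot$ and skew-supersymmetry of $\{,\}$, and the Jacobi identity follows from associativity of $\cdot$, the Jacobi identity for $\{,\}$, (QP3) and (QP4). Your plan to expand the cyclic sum, group terms by their polynomial degree in $n,m,k$, and match each block against one of these axioms is precisely the computation the paper leaves to the reader; the consequences (\ref{Pbracket}), (\ref{Pad}), (\ref{adiff}) you mention are all specializations of (QP4), so invoking them is not an additional ingredient.
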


The proof of this theorem is straightforward. Anticommutativity of $[ , ]$ follows from the commutativity of $\cdot$ and 
anticommutativity of $\{ , \}$. Jacobi axiom for $[ , ]$ follows from associativity of $\cdot$, Jacobi axiom for $\{ , \}$,
(QP3) and (QP4).

One can notice a resemblance of (\ref{Lie}) to the formula for the contact brackets given in Section 0.3 of \cite{GSL}.

Note that in general we can not completely recover the quasi-Poisson structure of $A$ from the Lie bracket on $\LL(A)$ since we 
can not use (\ref{Lie}) to define $\cdot$ product of two elements of $\Ker P$.

The construction we present here follows the idea of \cite{BN} which introduced Novikov algebras in a similar context
(see also \cite{GD} and \cite{OZ} for details on Novikov algebras). If $(A, \cdot)$ is a commutative superalgebra and $P = \DD + \lambda \Id$, where $\DD$ is a derivation of $(A, \cdot)$ and $\lambda \in \C$  then $A$ becomes a Novikov superalgebra with respect to a new product $\circ$ defined as $a \circ b = P(a) \cdot b$.

 If $E$ is an even idempotent of a quasi-Poisson superalgebra $A$, i.e., $E \cdot E = E$, and $P(E) = \omega E$ with $\omega \neq 0$, then the family $\{ E_n | n \in \Z \}$ spans a Virasoro subalgebra $\Vir$ in $\LL(A)$. For $E =1$ we shall denote the corresponding elements by $L_n$. 
 
We shall call an element $a \in A$ {\it primary} if it is a common eigenvector for $P$ and $Q = \ad(1)$. Suppose 
\begin{equation}
\label{eigen}
Q(a) = \chi a, \ P(a) = \omega a.
\end{equation} 
Then
$$[ L_n, a_m ] = (n \omega - m + \chi)  a_{n+m}.$$
We see that the family $\{ a_m \}$ spans in $\LL(A)$ a tensor $\Vir$-module 
\break
$V(-\omega, -\chi + \Z)$.  

\begin{lemma}
\label{prim}
Let $a_1$, $a_2$ be two primary elements in a quasi-Poisson superalgebra $A$ with $Q(a_i) = \chi_i a_i$, $P(a_i) = \omega_i a_i$, $i=1, 2$. Then

(a) $a_1 \cdot P(a_2)$ is a primary element of $A$ with the eigenvalues $\chi_1 + \chi_2$ and $\omega_1 + \omega_2 - 1$ for $Q$ and $P$ respectively.

(b) $\{ a_1, a_2 \} + \br{a_1}{a_2}$ is a primary element of $A$ with the eigenvalues $\chi_1 + \chi_2$ and $\omega_1 + \omega_2$ for $Q$ and $P$ respectively.
\end{lemma}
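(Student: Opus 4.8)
\textbf{Proof plan for Lemma~\ref{prim}.}
The plan is to verify both statements by direct computation from the axioms, treating $Q = \ad(1)$ as the distinguished operator that enters the new bracket $\br{}{}$ from \eqref{newbrak}. Recall $P(a_i) = \omega_i a_i$ and $Q(a_i) = \{1, a_i\} = \chi_i a_i$, and that by Proposition~\ref{doubleQP} the operator $Q$ satisfies the hypotheses of Lemma~\ref{QPA}, so $\br{a}{b} = Q(a)\cdot P(b) - P(a) \cdot Q(b)$ is available as a Lie bracket and the element in part (b) is a genuine element of $A$.

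For part (a), I would compute $P(a_1 \cdot P(a_2))$ and $Q(a_1 \cdot P(a_2))$ separately. For $P$: by \eqref{Qdiff} applied with the roles set so that $P$ plays the part of its own ``$P$'' — more directly, use axiom (QP3), i.e. $P(xy) = P(x)y + xP(y) - xy$, with $x = a_1$, $y = P(a_2) = \omega_2 a_2$. This gives $P(a_1 \cdot P(a_2)) = P(a_1)\cdot P(a_2) + a_1 \cdot P(P(a_2)) - a_1 \cdot P(a_2)$. The only nontrivial ingredient is $P^2(a_2)$, but since $P(a_2) = \omega_2 a_2$ we get $P^2(a_2) = \omega_2^2 a_2$, so the expression collapses to $(\omega_1 + \omega_2 - 1)\,a_1\cdot P(a_2)$. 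For $Q$: since $Q = \ad(1)$ is a derivation of the Lie bracket only in a twisted sense, I would instead use \eqref{adiff}, $\{1, a\cdot P(b)\} = \{1,a\}\cdot P(b) + a\cdot\{1,P(b)\}$, with $a = a_1$, $b = a_2$; combined with \eqref{Pad} giving $\{1, P(a_2)\} = P(\{1,a_2\}) = \chi_2 P(a_2)$, one obtains $Q(a_1 \cdot P(a_2)) = (\chi_1 + \chi_2)\,a_1\cdot P(a_2)$. This settles (a).

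For part (b), set $c := \{a_1, a_2\} + \br{a_1}{a_2}$. By Proposition~\ref{doubleQP}, the sum $\{,\} + \br{}{}$ is again a Lie bracket making $A$ a quasi-Poisson algebra, so I can apply \eqref{Pbracket} and \eqref{Pad} to \emph{this} bracket. Write $[\![a,b]\!] := \{a,b\} + \br{a}{b}$. Equation \eqref{Pbracket} for $[\![,]\!]$ reads $P([\![b,c]\!]) = [\![P(b),c]\!] + [\![b,P(c)]\!] + [\![1,b]\!]\cdot P(c) - P(b)\cdot [\![1,c]\!]$. The key simplification is $[\![1, a_i]\!] = \{1,a_i\} + \br{1}{a_i} = Q(a_i) + (Q(1)\cdot P(a_i) - P(1)\cdot Q(a_i)) = \chi_i a_i + (0 - \chi_i a_i) = 0$, using $Q(1) = \{1,1\} = 0$ and $P(1) = 1$; so $1$ is central for the combined bracket. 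Hence $P([\![a_1,a_2]\!]) = [\![P(a_1), a_2]\!] + [\![a_1, P(a_2)]\!] = (\omega_1+\omega_2)[\![a_1,a_2]\!]$, giving the $P$-eigenvalue $\omega_1+\omega_2$. For the $Q$-eigenvalue, $Q(c) = \{1, [\![a_1,a_2]\!]\}$; I would expand using the Jacobi identity for $\{,\}$ together with the relation between $\{1,-\}$ and $\br{}{}$, but the cleanest route is: since $1$ is central for $[\![,]\!]$, and since $Q = [\![1,-]\!] + (\text{a term that vanishes on }a_i)$ — more carefully, one checks $\{1, [\![a_1,a_2]\!]\}$ via the fact that $\ad(1)$ with respect to $\{,\}$ is a derivation of $\{,\}$ (ordinary Jacobi), obtaining $\{1,\{a_1,a_2\}\} = \{\chi_1 a_1, a_2\} + \{a_1, \chi_2 a_2\} = (\chi_1+\chi_2)\{a_1,a_2\}$, and a parallel computation for $\{1, \br{a_1}{a_2}\}$ using \eqref{adiff} and \eqref{Pad}, yielding $(\chi_1+\chi_2)\br{a_1}{a_2}$; summing gives $Q(c) = (\chi_1+\chi_2)c$.

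The main obstacle I anticipate is part (b)'s $Q$-eigenvalue computation: unlike $P$, the operator $Q = \ad(1)$ is not a derivation of the supercommutative product, so one cannot simply differentiate through $\br{a_1}{a_2} = Q(a_1)\cdot P(a_2) - P(a_1)\cdot Q(a_2)$ termwise — instead one must invoke \eqref{adiff} and \eqref{Pad} carefully (and track the sign $(-1)^{\pp{a_1}\pp{a_2}}$ in the super setting). Everything else is a mechanical substitution of the eigenvalue relations \eqref{eigen} into (QP3), \eqref{Pbracket}, \eqref{Pad}, \eqref{adiff}, so the write-up should be short once the centrality of $1$ for the combined bracket is recorded.
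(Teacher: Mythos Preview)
Your proposal is correct and follows essentially the same route as the paper: part (a) via (QP3) and \eqref{adiff}, and the $Q$-eigenvalue in (b) via the observation that $Q=\ad(1)$ is a derivation of both $\{,\}$ (Jacobi) and $\br{}{}$ (from \eqref{adiff} and \eqref{Pad}). The only genuine variation is the $P$-eigenvalue in (b): the paper computes $P\{a_1,a_2\}$ and $P\br{a_1}{a_2}$ separately from \eqref{Pbracket} and (QP3), then notes the stray $\pm\br{a_1}{a_2}$ terms cancel upon summing; you instead pass to the combined bracket $[\![,]\!]$ and record that $1$ is central for it (so $P$ becomes an honest derivation there). This is the same cancellation seen from a slightly higher vantage point --- in fact it is exactly the ``reduced'' QP-structure the paper introduces immediately after the theorem --- so the two arguments are interchangeable.
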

\begin{proof}
The claim of part (a) follows from (\ref{adiff}) and (QP3). Let us show part (b). Since $Q = \ad(1)$, it is a derivation of 
$\{ , \}$. It follows from (\ref{adiff}) that it is also a derivation with respect to $\br{}{}$. This establishes the claim for the eigenvalue of $Q$. For the action of $P$ we apply (\ref{Pbracket}) and (QP3):
$$ P \{ a_1, a_2 \}  = \{ P(a_1), a_2 \} + \{ a_1, P(a_2) \} + \br{a_1}{a_2},$$
$$ P \br{a_1}{a_2} = \br{P(a_1)}{a_2} + \br{a_1}{P(a_2)} - \br{a_1}{a_2},$$
and the claim follows.
\end{proof}
 
 \begin{theorem}
 \label{QPnprod}
Let $A$ be a finite-dimensional quasi-Poisson superalgebra.
 Fix a Virasoro subalgebra in $\LL(A)$, which is generated by $1 \in A$. 
 
(a) $\LL(A)$ is a superconformal algebra if and only if $Q = \ad (1)$ is diagonalizable on $A$. 

Let us assume that both $P$ and $Q$ are diagonalizable on $A$.
 
(b) Set $T = \ad (L_{-1})$. Then $\LL(A)$ is a regular superconformal algebra.

(c) We have $\Con (\LL(A)) = (\CC, \psi)$, where $\CC$ is a conformal superalgebra generated by $A$
as a free $\C[\del]$-module, $\CC \cong \C[\del] \otimes A$,  
with the following $n$-th products on the generators $a, b \in A$:
\begin{align}
&a_{(0)} b = \del (P(a) \cdot b) + \cb{a}{b} + \br{a}{b}, \nonumber \\
\label{nthprod}
&a_{(1)} b = P(a) \cdot b + a \cdot P(b),  \\
&a_{(n)} b = 0 \ \text{for \ } n \geq 2, \nonumber
\end{align}
and $\psi$ is a diagonal automorphism of $\CC$ which commutes with $\del$ and is defined on $A$ by 
$\psi = \exp(2 \pi i (P - Q))$.

(d) Every automorphism of a quasi-Poisson superalgebra $A$ extends to an automorphism of conformal superalgebra $\CC$ fixing the Virasoro element.
\end{theorem}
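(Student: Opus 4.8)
The plan is to define the extension as $\tilde\phi = \Id_{\C[\del]} \otimes \phi$ on $\CC \cong \C[\del] \otimes A$, i.e. $\tilde\phi(\del^k \otimes a) = \del^k \otimes \phi(a)$, and then to verify that this is an automorphism of the conformal superalgebra $\CC$ which fixes the Virasoro element. Before doing so, I would record the consequences of $\phi$ being an automorphism of the QP-superalgebra $(A, \cdot, \cb{}{}, P)$: it is an even bijective linear map with $\phi(a \cdot b) = \phi(a) \cdot \phi(b)$, $\phi(\cb{a}{b}) = \cb{\phi(a)}{\phi(b)}$ and $\phi P = P \phi$; since $1$ is the unit of $(A, \cdot)$ we have $\phi(1) = 1$, so $\phi$ commutes with $Q = \ad(1)$ (apply $\phi$ to $\cb{1}{a}$) and preserves the auxiliary bracket $\br{a}{b} = Q(a) \cdot P(b) - P(a) \cdot Q(b)$ of (\ref{newbrak}).

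Next I would check that $\tilde\phi$ respects the conformal structure. It is by construction an even, bijective $\C[\del]$-module map commuting with $\del$, with inverse $\Id \otimes \phi^{-1}$; since $\phi^{-1}$ is again a QP-automorphism, it is enough to show that $\tilde\phi$ preserves the $n$-th products. By $\del$-sesquilinearity the $n$-th products on all of $\CC$ are determined by their restriction to the generating set $A$, so it suffices to check the formulas (\ref{nthprod}) on $a, b \in A$. Applying $\tilde\phi$ to $a_{(0)} b = \del(P(a) \cdot b) + \cb{a}{b} + \br{a}{b}$ and to $a_{(1)} b = P(a) \cdot b + a \cdot P(b)$, and using that $\phi$ commutes with $P$ and $\del$ and preserves $\cdot$, $\cb{}{}$ and $\br{}{}$, each right-hand side is carried to the corresponding expression in $\phi(a), \phi(b)$; hence $\tilde\phi(a_{(n)} b) = \phi(a)_{(n)} \phi(b)$ for all $n \geq 0$ (the case $n \geq 2$ being trivial). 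Thus $\tilde\phi$ is an automorphism of $\CC$ restricting to $\phi$ on $A$.

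Finally I would identify the Virasoro element of $\CC$: it is the generator $1 \in A \subset \CC$, since $P(1) = 1$ together with (\ref{nthprod}) gives $1_{(0)} 1 = \del 1$, $1_{(1)} 1 = 2 \cdot 1$ and $1_{(n)} 1 = 0$ for $n \geq 2$, which is precisely the defining relation of a (centerless) Virasoro conformal vector, matching the Virasoro subalgebra of $\LL(A)$ generated by $1$. As $\phi(1) = 1$, the extension $\tilde\phi$ fixes this element, and the theorem follows. I do not expect a real obstacle in this argument; the only points needing a little care are the bookkeeping observation that an automorphism of $A$ automatically commutes with $P$ and $Q$ and preserves $\br{}{}$, and the standard fact that compatibility with the $n$-th products need only be checked on the $\C[\del]$-module generators $A$.
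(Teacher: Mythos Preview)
Your proposal addresses only part (d) of the theorem; parts (a)--(c) are not treated. For part (d), your argument is correct and is essentially the paper's one-line proof spelled out in full: the paper just says that (d) follows from the expansion (\ref{delta}) (equivalently, the formulas (\ref{nthprod})), since the $n$-th products on $\CC$ are expressed entirely through the quasi-Poisson data $\cdot$, $\cb{}{}$, $P$ and the derived objects $Q$, $\br{}{}$, all of which a QP-automorphism preserves. Your explicit definition $\tilde\phi = \Id_{\C[\del]}\otimes\phi$, the reduction to generators via sesquilinearity, and the identification of the Virasoro element as $1\in A$ are exactly the details the paper leaves implicit.
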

\begin{proof}
To establish (a), we need to show that $\LL(A)$ has a grading by the the eigenspaces of $\ad (L_0)$.
We get from (\ref{Lie}) that
$$ [L_0, a_m ] = -m a_m + Q(a)_m ,$$
from which the claim (a) follows.

Note that $P$ and $Q$ commute, so if both of these operators are diagonalizable, then they are diagonalizable in the same basis. In this case $A$ has a basis of primary elements. Then as the adjoint $\Vir$-module, $\LL(A)$ decomposes into a direct sum of tensor modules. By Theorem \ref{grading}, $\LL(A)$ is a graded by the group $\C/\Z$, 
$$\LL(A) = \mathop\oplus\limits_{\alpha \in \C/\Z} \LL(A)_\alpha,$$
where for each primary element $a \in A$ satisfying (\ref{eigen}), the family $\{ a_m | m \in \Z \}$ belongs to component
corresponding to the coset $\alpha = \omega - \chi  + \Z$. Let us construct a primary field corresponding to this family:
$a(z) = \sum_{m \in \Z} a_m z^{-m+\chi-\omega-1}$. 
Using
$$ [L_{-1} , a_m] = (-m - \omega + \chi) a_{m-1},$$
it is straightforward to check that $a(z)$ is $T$-covariant,
$[L_{-1}, a(z)] = \dd{z} a(z)$. Mutual locality of such fields follows from the expression (\ref{Lie}) 
for the Lie bracket in $\LL(A)$:
\begin{align}
\left[ a(z), b(w) \right] =& \left( \dd{w} (P(a) \cdot b)(w) \right) \delta_\alpha (w - z) \nonumber \\
&+ \left( P(a) \cdot b + a \cdot P(b) \right) (w) \dd{w} \delta_\alpha (w - z) \label{delta} \\
&+ \left( \{ a, b \} + \br{a}{b} \right) (w) \delta_\alpha (w - z), \nonumber
\end{align}
where $a \in A_\alpha$, $b \in A_\beta$.

It follows from Lemma \ref{prim} that $\exp(2\pi i P)$ and $\exp(\lambda Q)$ with $\lambda \in \C$ are commuting 
automorphisms of $\CC$, and so is $\psi = \exp(2 \pi i (P - Q))$.

Finally, claim (d) follows from (\ref{delta}).
\end{proof}

We call a quasi-Poisson algebra {\it reduced} if $Q = \ad (1) = 0$. We can see from Proposition \ref{doubleQP} that starting
from an arbitrary QP-algebra $A$, we can construct a reduced QP-algebra by taking a new Lie bracket on the same space:
$$\bcb{a}{b} = \cb{a}{b} + \br{a}{b}.$$
We will use bold brackets for this new operation and denote the quasi-Poisson algebra $(A, P, \cdot, \bcb{}{})$ by $\dot{A}$. 
It follows from (\ref{Pbracket}) that in a reduced QP-algebra operator $P$ is a derivation of the Lie bracket.
Theorem \ref{QPnprod} (c) implies that conformal algebras associated to $\LL(A)$ and $\LL(\dot{A})$ are isomorphic 
(assuming that $P$ and $Q$ are diagonalizable), hence $\LL(A)$ and $\LL(\dot{A})$ are twisted forms of each other.

We conclude this section with the following

{\bf Question.} 
{\it Does it exist a simple finite-dimensional quasi-Poisson superalgebra with a non-diagonalizable operator $P$?}

 Applying Theorem \ref{mainQP} to such an algebra might yield a simple superconformal algebra with no associated finite 
conformal superalgebra.

%
%
%
%
%

\section{Appendix: Simple quasi-Poisson algebras}

We will describe here simple quasi-Poisson algebras corresponding to known simple superconformal algebras.
Before we do this, let us introduce some notations.

Let $\Lambda(N)$ be the Grassmann algebra, which is the exterior algebra with odd generators $\xi_1, \ldots, \xi_N$.
This algebra is associative and supercommutative. 

Let $\del_i = \dd{\xi_i}$ be the derivation in variable $\xi_i$, $i=1, \ldots,N$.
If $f \in \Lambda(N)$ does not contain the variable $\xi_i$ then $\del_i (\xi_i f) = f$ and $\del_i (f) = 0$.

Let $W(N)$ be the Lie superalgebra of derivations of $\Lambda(N)$. It is a free $\Lambda(N)$-module of rank $N$:
$$W(N) = \mathop\oplus\limits_{i=1}^N \Lambda(N) \del_i .$$

We shall denote the supercommutative product in $\Lambda(N)$ and the left action of $\Lambda(N)$ on $W(N)$ by
concatenation: $fg \in \Lambda(N)$, $f \eta \in W(N)$ for $f, g \in \Lambda(N)$, $\eta \in W(N)$.

In addition to the usual (left) derivations $\del_i$, we introduce {\it right} derivations $\sdel_i$, which are defined by
$$(f \xi_i)\sdel_i = f, \ \ f \sdel_i = 0, $$
where $f\in \Lambda(N)$ does not contain the variable $\xi_i$. There is an obvious relation between left and right differentiations:
$$ \del_i (f) = (-1)^{\pp{f} - 1} (f)\sdel_i ,$$
and the Leibniz rule is written as:
$$ (fg)\sdel_i = f (g)\sdel_i + (-1)^{\pp{g}} (f) \sdel_i g .$$
The use of right derivations allows us to avoid non-intuitive negative signs in some formulas.

Consider also the ring 
$\R_N = \C[t,t^{-1}] \otimes \Lambda (N)$ and its Lie superalgebra of derivations
$$W(1, N) = \R_N \delt \oplus \sum\limits_{i=1}^N \R_N \del_i ,$$
where $\delt = \dd{t}$. Lie superalgebra $W(1, N)$ is a simple superconformal algebra,
and in fact, all known simple superconformal algebras may be realized as subalgebras in $W(1, N)$.

\
  
{\bf 1. Contact type $K_N$.}
Quasi-Poisson algebra $K_N$ is actually a Poisson algebra. Its space is the Grassmann algebra
$\Lambda(N)$ with $\cdot$ being the usual supercommutative product in $\Lambda(N)$.
The Lie bracket $\cb{}{}$ is the Poisson bracket on $\Lambda(N)$ defined as follows:
$$\cb{f}{g} = \sum_{i=1}^N (f)\sdel_i \, \del_i (g) .$$
Operator $P$ is defined by
$$ P = \Id - \frac{1}{2} \sum_{i=1}^N \xi_i \del_i .$$
It is easy to check that $P$ satisfies (QP2) and is a derivation of $\cb{}{}$. By Lemma \ref{poisson}, $K_N$ is a quasi-Poisson
algebra.

Consider differential 1-form
$$\omega = dt - \sum_{i=1}^N \xi_i d \xi_i .$$

 Superconformal algebra $\LL(K_N) \cong K(1, N)$ is the following subalgebra in $W(1, N)$:
$$ K(1, N) = \left\{ \eta \in W(1, N) \,  | \, \eta \omega = f \omega \text{ for some } f\in\R_N \right\} .$$ 
Superconformal algebras $K(1, N)$ are simple, except in the case of $N=4$. Superalgebra $K(1,4)$ has a simple ideal of codimension 1.

\

{\bf 2. Witt type $W_N$.}

Quasi-Poisson algebra $W_N$ corresponds to superconformal algebra $W(1, N)$. Its underlying space
is $\Lambda(N) \oplus W(N)$. The supercommutative product $\cdot$ in $W_N$ 
is given by
$$ f \cdot g = fg, \ \ f \cdot \eta = f \eta, \ \ \eta \cdot \tau = 0,$$
for $f, g \in \Lambda(N)$, $\eta, \tau \in W(N)$.
The Lie bracket $\cb{}{}$ is given as follows:
$$\cb{f}{g} = 0, \ \ \cb{\eta}{f} = \eta(f), \ \ \cb{\eta}{\tau} = [\eta, \tau] .$$
Operator $P$ is defined by $P(f) = f$, $P(\eta) = 0$. This is a simple quasi-Poisson superalgebra, which is not a Poisson superalgebra.

Elements in $W_N$ give rise to the following families in $\LL(W_N) \cong W(1, N)$:
$$ f_n = - t^{n+1} f \delt, \ \ \eta_n = t^n \eta .$$

\

{\bf 3. Divergence zero type $S_{N, \alpha}$, $N \geq 2$.}

The divergence operator on $W(1, N)$ is defined as
$$\dv \left( P_0 \delt + \sum_{i=1}^N P_i \del_i \right) 
= \pd{P_0}{t} -  \sum_{i=1}^N (P_i) \sdel_i .$$
Negative sign in the above formula is placed in order to agree with the notations of \cite{KL}.

It follows from the equality
$$\dv ([\eta, \tau]) = \eta( \dv \tau) - (-1)^{\pp{\eta} \pp{\tau}} \tau( \dv \eta) $$
that the subspace $S(1, N) \subset W(1, N)$ of divergence zero elements forms a subalgebra. 
In the same way we define the divergence zero subalgebra $S(N)$ in $W(N)$.

It turns out that one can generalize a construction of $S(1, N)$ and get a family of superconformal algebras.

Fix even element $F \in \R_N$. Consider the following subspace in $W(1, N)$:
$$ S_F (1, N) = \left\{ \eta \in W(1, N) \, | \, \dv (F \eta) = 0 \right\} .$$
 Since
 \begin{align*}
\dv F [\eta, \tau] =& \eta( \dv(F\tau)) - (-1)^{\pp{\eta}\pp{\tau}} \tau( \dv(F \eta)) \\
& - \dv( F \eta) \,\dv \tau +   (-1)^{\pp{\eta}\pp{\tau}} \dv( F \tau) \,\dv \eta,
\end{align*}
$S_F (1, N)$ is actually a Lie subalgebra in $W(1, N)$. If $F$ is invertible then
$S_F (1, N) = F^{-1} S(1, N)$.

Now set $F = t^\alpha$, where we allow $\alpha$ to be in $\C$.
Then $S_\alpha (1, N)$ is 
$$ S_\alpha (1, N) = \left\{ \eta \in W(1, N) \, | \, \dv (t^\alpha \eta) = 0 \right\} .$$ 

Consider also derivation 
$$ d = - \frac{1}{N} \sum_{i=1}^N \xi_i \del_i .$$

Lie superalgebra $S_\alpha (1, N)$ is a superconformal algebra with the Virasoro subalgebra spanned by
$$L_n = -t^{n+1} \del_t + (n+\alpha+1) t^n d .$$
The derived subalgebra $S^\prime_\alpha (1, N)$ is a simple superconformal algebra.
If $\alpha \not\in\Z$, the derived subalgebra coincides with $S_\alpha (1, N)$, and if $\alpha\in\Z$,  it has codimension 1 in  and
$S_\alpha(1, N) = S^\prime_\alpha (1, N) \oplus \C t^{-\alpha} \xi_1 \ldots \xi_N \del_t$. 

Consider linear functional 
$$\int: \, \Lambda(N) \rightarrow \C, $$
where $\int f$ is the coefficient of the monomial $\xi_1 \ldots \xi_N$ in $f$.
Denote by $\Ls (N)$ the kernel of $\int$, it is the subspace in $\Lambda(N)$ spanned by monomials of degree less than $N$. 
The obvious equality $\int \del_i f = 0$ implies the integration by parts formula:
$$ \int f \del_i (g) = \int (f)\sdel_i g.$$
We can immediately see from the integration by parts formula that for $\eta \in W(N)$, $f \in \Lambda(N)$
$$ \int \eta(g) = - \int \dv(\eta) g .$$
As a consequence, we conclude that for $\eta \in S(N)$, we have $\eta (f) \in \Ls (N)$.

The space of the quasi-Poisson superalgebra $S_{N, \alpha}$ is $\Ls (N) \oplus S(N)$. We define operator $P$ on $S_N$ by
\begin{equation}
\label{SP}
P (f) = f + d(f), \ \ P(\eta) = [d, \eta] .
\end{equation}
We point out that even though $d$ does not belong to $S(N)$, it still acts on $S(N)$ as an outer derivation.
It is easy to see that $P$ is invertible on $\Ls (N)$.

Supercommutative multiplication $\cdot$ on $S_{N, \alpha}$ is defined by
\begin{equation}
\label{commult}
f \cdot g = fg, \ \ f \cdot \eta = f \eta - (-1)^{\pp{f} \pp{\eta}} P^{-1} (\eta(f)) d, \ \ \eta \cdot \tau = 0.
\end{equation}

In order to see that $f \cdot \eta \in S(N)$ for $\eta \in S(N)$, we need to apply the following two formulas:
$$ \dv (f \eta) = f \dv \eta + (-1)^{\pp{f} \pp{\eta}} \eta(f) ,
\ \ \  \dv ( g d ) = P(g) .$$
Lie bracket in $S_{N, \alpha}$ is given by
\begin{align*}
&\cb{f}{g} = (\alpha + 1) (f d(g) - d(f) g), \\
&\cb{f}{\eta} =  (\alpha + 1) \left(f \cdot [d, \eta] \right) - (-1)^{\pp{f}\pp{\eta}} P^{-1} (\eta (P (f))), \\
&\cb{\eta}{\tau} = [\eta, \tau].
\end{align*}

We have $\LL(S_{N, \alpha}) \cong S^\prime_\alpha (1, N)$.

Elements of $S_{N, \alpha}$ generate the following families in $S^\prime_\alpha (1, N)$:
\begin{align*}
&f_n = - t^{n+1} P(f) \del_t + (n + \alpha + 1) f d, \\
&\eta_n = t^n \eta .
\end{align*}

Note that a reduced QP-superalgebra ${\dot S}_{N, \alpha}$ is the same for all $\alpha$ and has a Lie bracket
\begin{align*}
\bcb{f}{g} = 0, \ \
\bcb{\eta}{f} = P^{-1} (\eta (P (f))), \ \
\bcb{\eta}{\tau} = [\eta, \tau].
\end{align*}
Hence superconformal algebras $S^\prime_\alpha (1, N)$ correspond to different automorphisms of the same conformal 
superalgebra.

\

{\bf 4. Type $\tS_N$, $N$ even.}

Let $N$ be even and take $F = 1 + \xi_1 \ldots \xi_N$. 
Consider the following subalgebra in $W(1, N)$:
$$ \tS (1, N) = \left\{ \eta \in W(1, N) \, | \, \dv (F \eta) = 0 \right\} .$$
It is a simple superconformal algebra, and the corresponding quasi-Poisson algebra  $\tS_N$ has the following 
structure. As a commutative algebra, $\tS_N$ is isomorphic to $S_N$: $\tS_N = \Lambda^\circ (N) \oplus S(N)$, 
multiplication given by  (\ref{commult}), and operator $P$ given by (\ref{SP}).

Lie bracket in $\tS_N$ is defined by
\begin{align*}
&\cb{f}{g} =f d(g) - d(f) g, \\
&\cb{f}{\eta} =  f \cdot [d, \eta] - (-1)^{\pp{f}\pp{\eta}} P^{-1} \big(\eta \big((1 -  \xi_1 \ldots \xi_N) P (f)\big)\big), \\
&\cb{\eta}{\tau} = (1 -  \xi_1 \ldots \xi_N) [\eta, \tau] - \eta( \xi_1 \ldots \xi_N ) \tau 
+ (-1)^{\pp{\eta}\pp{\tau}} \tau( \xi_1 \ldots \xi_N ) \eta .
\end{align*}

\

{\bf 5. Cheng-Kac algebra $\CK_6$.}

 We begin with the Poisson algebra $\Lambda(6)$ and we denote by $\Lambda{6 \choose k}$ the span of monomials of degree $k$, so that
$$\Lambda(6) = \mathop\oplus\limits_{k=0}^6 \Lambda{6 \choose k}.$$

We define the Hodge dual of a monomial $\xi_{i_1} \ldots \xi_{i_k}$ as 
$(\xi_{i_1} \ldots \xi_{i_k})^* = \pm \xi_{j_1} \ldots \xi_{j_{6-k}}$ with the property that
$\xi_{i_1} \ldots \xi_{i_k}  (\xi_{i_1} \ldots \xi_{i_k})^* = \xi_1 \xi_2 \ldots \xi_6$.

We consider a polarization on the middle space of the above decomposition 
$\Lambda{6 \choose 3} =  \Lambda^+{6 \choose 3} \oplus \Lambda^-{6 \choose 3}$, where 
the subspace $\Lambda^\pm {6 \choose 3}$ is spanned by the elements of the form
$\xi_i \xi_j \xi_k \pm \sqrt{-1} (\xi_i \xi_j \xi_k)^*$.

The space of the Cheng-Kac algebra is
$$\CK_6 = \sum_{k=0}^2 \Lambda{6 \choose k} \oplus \Lambda^+ {6 \choose 3}.$$

We are going to define a Lie bracket and a supercommutative multiplication on this space. It turns out that $\CK_6$ is closed under the Poisson bracket of $\Lambda(6)$, so it inherits Lie superalgebra structure. We construct a supercommutative product
$\cdot$ in two steps. First we consider the usual product on $\Lambda(6)$. The subspace
$\Lambda^- {6 \choose 3} \oplus  \sum_{k=4}^6 \Lambda{6 \choose k}$ is an ideal, and we get a supercommutative product
on $\CK_6$, identifying it as the quotient of $\Lambda(6)$ by this ideal. Next, we need to twist this product.

Consider an invertible endomorphism $\varphi$ on $\CK_6$, which is $2 \Id$ on $\Lambda^+ {6 \choose 3}$, and $\Id$ on
all other graded components. We define $\cdot$ on $\CK_6$ as a twisted product:
$$ a \cdot b = \varphi( \varphi^{-1}(a) \varphi^{-1}(b) ).$$

We also define operator $P$ on $\CK_6$ in the same way as in $\Lambda(6)$: $P = \Id - \frac{1}{2} \sum_{i=1}^6 \xi_i \del_i$.

It turns out that $(\CK_6, \cdot, \cb{}{}, P)$ is a simple quasi-Poisson algebra (but not a Poisson algebra).
Exceptional superconformal Cheng-Kac algebra $\CK(6)$ is a superconformal algebra associated with the quasi-Poisson algebra 
$\CK_6$. See \cite{CK} for the embedding of $\CK(6)$ into $K(1, 6)$.

\end{document}